\newtheorem{thm}{Theorem}[section]
\newtheorem{lem}[thm]{Lemma}
\newtheorem{prop}[thm]{Proposition}
\newtheorem{cor}[thm]{Corollary}
\newtheorem*{theorem*}{Theorem}
\theoremstyle{remark}
\newtheorem{Remark}[thm]{Remark}
\newtheorem{note}[thm]{Notation}
\newtheorem{Definition}[thm]{Definition}
\newtheorem{example}[thm]{Example}
\numberwithin{equation}{section}
\newcommand{\Z}{\mathbb{Z}}
\newcommand{\N}{\mathbb{N}}
\newcommand{\T}{\mathbb{T}}
\newcommand{\la}{\Lambda}
\newcommand{\ul}{\overline{\Lambda}}
\newcommand{\per}{\widehat{\mathrm{Per}(\Lambda T)}}
\newcommand{\h}{\mathcal{H}_\Lambda}
\newcommand{\C}{\mathcal{C}}
\begin{document}
\title[Primitive ideals and decomposability of $k$-graph $C^*$-algebras]{Primitive ideal space of higher-rank graph $C^*$-algebras and decomposability}

\author[Hossein Larki]{Hossein Larki}

\address{Department of Mathematics\\
Shahid Chamran University of Ahvaz\\
Ahvaz, Iran}
\email{h.larki@scu.ac.ir}

%\thanks{This research was supported by NSF Grant DMS-1234567}

\date{\today}

\subjclass[2010]{46L05}

\keywords{$k$-graph, higher-rank graph $C^*$-algebra, primitive ideal, decomposability}

\begin{abstract}
In this paper, we describe primitive ideal space of the $C^*$-algebra $C^*(\Lambda)$ associated to any locally convex row-finite $k$-graph $\Lambda$. To do this, we will apply the Farthing's desourcifying method on a recent result of Carlsen, Kang, Shotwell, and Sims. We also characterize certain maximal ideals of $C^*(\la)$.

Furthermore, we study the decomposability of $C^*(\la)$. We apply the description of primitive ideals to show that if $I$ is a direct summand of $C^*(\la)$, then it is gauge-invariant and isomorphic to a certain $k$-graph $C^*$-algebra. So, we may characterize decomposable higher-rank $C^*$-algebras by giving necessary and sufficient conditions for the underlying $k$-graphs. Moreover, we determine all such $C^*$-algebras which can be decomposed into a direct sum of finitely many indecomposable $C^*$-algebras.
\end{abstract}

\maketitle

%%%%%%%%%%%%%%%%%%%%%%%%%%%%%%%%%%%%%%%%%%%%%%%%

\section{Introductions}

Motivated from \cite{rob99}, the $C^*$-algebras of higher-rank graphs (or $k$-graphs) were introduced by Kumjian and Pask in \cite{kum00} as higher-rank analogous of the graph $C^*$-algebras. They were first considered in \cite{kum00} only for row-finite $k$-graphs with no sources, and then generalized for locally convex row-finite and finitely aligned setting \cite{rae03,rae04}. Since then, they have received a great deal of attention and provided a very interesting source of examples rather than ordinary graph $C^*$-algebras (see \cite{pas06,pas09} among others).

For any countable directed graph $E$, Hong and Szyma$\acute{\mathrm{n}}$ski described in \cite{hon04} primitive ideal space of the $C^*$-algebra $C^*(E)$ and its hull-kernel topology. After that, there have been many efforts to characterize primitive ideals of higher-rank graph $C^*$-algebras (see \cite{sho09,kan14} for example). The substantial work of Carlsen, Kang, Shotwell and A. Sims in \cite{car14} is to catalogue all primitive (two-sided and closed) ideals of the $C^*$-algebra $C^*(\la)$ of a row-finite higher-rank graph $\la$ with no sources. Despite some similarities, the structure of primitive ideals in higher-rank graph $C^*$-algebras are much more complicated compared with that of ordinary graphs.  Although the main result of \cite{car14} is a generalization of the Hong-Szyma$\acute{\mathrm{n}}$ski's description, but its methods and computations are quite different from \cite{hon04}.

In this paper, we let $\la$ be a locally convex row-finite $k$-graph with possible sources. Our primary aim is to characterize all primitive ideals of $C^*(\la)$. To this end, we apply the Farthing's desourcification \cite{far08} on the results of \cite{car14}. Recall that Farthing in \cite{far08} constructed a specific $k$-graph $\overline{\la}$ without sources which contains $\la$ as a subgraph. She showed in \cite[Theorem 3.30]{far08} that $C^*(\la)$ is a full corner in $C^*(\overline{\la})$, and therefore they are Morita-equivalent (see Section \ref{sec2.4} below for details). So, we may characterize the structure of primitive ideal space of $C^*(\la)$ using that of $C^*(\ul)$. Note that the Farthing's desourcification was modified by Webster in \cite[Section 4]{web11}. However, \cite[Proposition 4.12]{web11} shows that the desourcifications constructed in \cite[Section 3]{far08} and \cite[Section 4]{web11} are isomorphic when $\la$ is row-finite. Furthermore, there is a mistake in the proof of \cite[Theorem 3.30]{far08} (the proof works only for locally convex row-finite $k$-graphs), and Webster resolved it in \cite[Theorem 6.3]{web11} (see \cite[Remark 6.2]{web11}).

The rest of article will be devoted to some applications of the primitive ideal structure of $C^*(\la)$. First, as any ideal is an intersection of primitive ideals, certain maximal ideals of $C^*(\la)$ will be determined. Then, we will describe ideals which are direct summands of $C^*(\la)$, and study the decomposability of $C^*(\la)$.

This paper is organized as follows. In Section 2, we recall some elementary definitions and basic facts about $k$-graphs and their $C^*$-algebras from \cite{kum00,rae03}. We also review the Farthing's desourcification of a higher-rank graph $\la$. In Section 3, we define an equivalent relation on a row-finite $k$-graph $\la$, which will be used in Section 4 to describe generators of primitive ideals. We then discuss on relationships between equivalent paths in $\la$ and in its desourcification $\ul$. In Section 4, for any locally convex row-finite $k$-graph $\la$, we characterize primitive (two-sided and closed) ideals of $C^*(\la)$ and define specific irreducible representations whose kernels generate such ideals. Then, in Section 5, some certain maximal primitive ideals of $C^*(\la)$ are determined. As a consequence, we see that when $\la$ is a cofinal $k$-graph, then primitive ideals of $C^*(\la)$ are all maximal.

In Section 6, by applying the description of primitive ideals, we give some graph theoretic conditions for the decomposability of $C^*(\la)$. In particular, we show that if $C^*(\la)$ decomposes as $A\oplus B$, then $A$ and $B$ are gauge-invariant ideals of $C^*(\la)$, which are themselves isomorphic to certain $k$-graph $C^*$-algebras. Finally, in Section 7, we consider the question ``when is $C^*(\la)$ a direct sum of finitely many indecomposable $C^*$-algebras?". We describe all such higher-rank graph $C^*$-algebras by giving necessary and sufficient conditions on the underlying $k$-graphs.

%%%%%%%%%%%%%%%%%%%%%%%%%%%%%%%%%%%%%%%%%%%%%%%%

\section{Higher-rank graphs and their $C^*$-algebras}

In this section, we review some basic facts about higher-rank graphs and their $C^*$-algebras, which will be needed in the paper. We refer the reader to \cite{kum00} and \cite{rae03} for more details.

\subsection{Higher-rank graphs}

Fix a positive integer $k>0$. We regard $\mathbb{N}^k:=\{n=(n_1,\ldots, n_k): n_i\geq 0\}$ as an additive semigroup with identity $0$. We denote by $e_1,\ldots,e_k$ the standard generators of $\N^k$. The relation $m\leq n ~ \Longleftrightarrow ~ m_i\leq n_i$ for $1\leq i\leq k$, puts a partial order on $\mathbb{N}^k$. We write $m\vee n$ and $m\wedge n$ for the coordinatewise maximum and minimum of $m,n\in \N^k$, respectively.

\begin{Definition}[\cite{kum00}]
A \emph{$k$-graph} (or \emph{graph of rank $k$}) is a countable small category $\la=(\la^0,\la^1,r,s)$ equipped with a \emph{degree functor} $d:\la \rightarrow \N^k$ satisfying the \emph{factorization property}: for each $\lambda\in\la$ and $m,n\in\N^k$ with $d(\lambda)=m+n$, there exist unique $\mu,\nu\in \la$ such that $d(\mu)=m$, $d(\nu)=n$, and $\lambda=\mu\nu$. Note that for $\mu,\nu\in\la$, the composition $\mu \nu$ makes sense only if $s(\mu)=r(\nu)$.
\end{Definition}

Note that every directed graph may be considered as a $1$-graph (and vise versa), in the usual manner. With this example in mind, we make some notations. For each $n\in \N^k$, we think of $\la^n:=d^{-1}(n)$ as the paths of degree $n$; in particular, $\la^0=d^{-1}(0)$ is the vertices in $\la$. For $v\in \la^0$, $H\subseteq \la^0$, and $E,F\subseteq \la$, we write $vE:=\{\mu\in E:r(\mu)=v\}$, $HE:=\{\mu\in E,r(\mu)\in H\}$ and
$$EF:=\{\mu\nu:\mu\in E,\nu\in F, ~\mathrm{and}~ s(\mu)=r(\nu)\},$$
and we define $Ev$ and $EH$ analogously. A vertex $v\in \la^0$ is called \emph{source} if $v\la^{e_i}=\emptyset$ for some $1\leq i\leq k$.

\begin{Definition}
Let $\la$ be a $k$-graph. We say that $\la$ is \emph{row-finite} if the sets $v\la^n$ are all finite for $v\in\la^0$, $n\in\N^k$. Also, $\la$ is called \emph{locally convex}, if for every $v\in \Lambda^{0}$, $i\neq j\in \{1,2,\ldots,k\}$, $\lambda\in v\Lambda^{e_{i}}$ and $\mu\in v\Lambda^{e_{j}}$, we have $s(\lambda)\Lambda^{e_{j}}\neq \emptyset$ and $s(\mu)\Lambda^{e_{i}}\neq \emptyset$. Observe that if $\la$ has no sources, then $\la$ is locally convex. Throughout the paper, we work only with locally convex, row-finite $k$-graphs.
\end{Definition}

\subsection{Boundary Paths}

Let $\la$ be a locally convex row-finite $k$-graph. For $n\in \N^k$, we write
$$\la^{\leq n}:=\left\{\lambda\in\la: d(\lambda)\leq n, ~ \mathrm{and} ~ d(\lambda)_i<n_i ~\Longrightarrow ~ s(\lambda)\la^{e_i}=\emptyset \right\}.$$
Note that if $\la$ has no sources, then $\la^{\leq n}=\la^n$.

\begin{example}
For any $m\in (\N\cup \{\infty\})^k$, let $\Omega_{k,m}:=\{(p,q)\in \N\times \N: p\leq q \leq m\}$. If we define $r(p,q):=(p,p)$, $s(p,q):=(q,q)$, and $d(p,q):=q-p$, then $\Omega_{k,m}$ is a $k$-graph. For simplicity, each $(p,p)$ is denoted by $p$. Thus, we regard $\Omega_{k,m}^0=\{p: p\leq m\}$ as the object set of $\Omega_{k,m}$. For $m=(\infty,\ldots,\infty)$, the $k$-graph $\Omega_{k,m}$ is denoted by $\Omega_k$ in \cite{kum00}.
\end{example}

A \emph{boundary path in $\la$} is a degree preserving functor $x: \Omega_{k,m}\rightarrow\Lambda$ such that $p\leq m$ and $p_{i}=m_{i}$ imply $x(p)\Lambda^{e_{i}}=\emptyset$. Then, $m$ is called the \emph{degree of $x$} and we write $d(x)=m$. When $m=(\infty,\ldots,\infty)$, $x$ is an \emph{infinite path} in the sense of \cite{kum00}. We denote by $\la^{\leq\infty}$ the set of boundary paths in $\la$. Note that if $\la$ has no sources, then every boundary path is an infinite path, so $\la^{\leq \infty}=\la^\infty$.

Given any $x\in \la^{\leq \infty}$ and $n\leq d(x)$, we may define the $n$-shifted boundary path $\sigma^n(x):\Omega_{k,d(x)-n}\rightarrow \la$ by $\sigma^{n}(x)(p,q) :=x(p+n,q+n)$ for $p\leq q\leq d(x)-n$. Moreover, if $\lambda\in\Lambda x(0)$, there is a unique boundary path $\lambda x\in \la^{\leq \infty}$ such that $\lambda x (0,d(\lambda))=\lambda$ and $(\lambda x)(d(\lambda), d(\lambda) +p)=x(0,p)$ for all $p\leq d(x)$. So, we have $\sigma^{d(\lambda)}(\lambda x)=x$.

\subsection{The $C^*$-algebra of a higher-rank graph}

Let $\la$ be a locally convex row-finite $k$-graph. A \emph{Cuntz-Krieger $\la$-family} is a set of partial isometries $\{S_\lambda: \lambda\in\la\}$ satisfying the following relations:
\begin{enumerate}
  \item $S_v S_w=\delta_{v,w}S_v$ for all $v,w\in \la^0$;
  \item $S_\lambda S_{\lambda'}=S_{\lambda\lambda'}$ for all $\lambda,\lambda'\in\la$ with $s(\lambda)=r(\lambda')$;
  \item $S_\lambda^* S_\lambda=S_{s(\lambda)}$ for all $\lambda\in \la$;
  \item $S_v=\sum_{\lambda\in v\la^{\leq n}} S_\lambda S_\lambda^*$ for all $v\in \la^0$ and $n\in \N^k$.
\end{enumerate}
The associated $C^*$-algebra $C^*(\la)$ is the \emph{universal $C^*$-algebra} generated by a Cuntz-Krieger $\la$-family $\{s_\lambda:\lambda\in \la\}$. The universality implies that there is a \emph{gauge action} $\gamma:\mathbb{T}^k\rightarrow \mathrm{Aut} C^*(\la)$ such that $\gamma_t(s_\lambda)=t^{d(\lambda)} s_\lambda$ for $t\in \mathbb{T}^k$, where $t^{d(\lambda)}:=t_1^{d(\lambda)_1} \ldots t_k^{d(\lambda)_k}$.

By \emph{ideal} we mean a closed and two-sided one. An ideal $I$ of $C^*(\la)$ is called \emph{gauge-invariant} if $\gamma_z(I)\subseteq I$ for every $z\in \mathbb{T}^k$. It is well-know by \cite[Theorem 5.2]{rae03} that gauge-invariant ideals of $C^*(\la)$ are associated to hereditary and saturated subsets of $\la^0$.

\begin{Definition}
A subset $H$ of $\la^{0}$ is called \emph{hereditary} if $r(\lambda)\in H$ implies $s(\lambda)\in H$ for every $\lambda\in \la$. Also, we say that $H$ is \emph{saturated} if $v\in \la^0$ and $s(v\la^{\leq n})\subseteq H$ for some $n\in\N^k$, then $v\in H$. If $H$ is a subset of $\la^0$, the saturated closure of $H$ is the smallest saturated subset $\Sigma(H)$ of $\la^0$ containing $H$. Recall from \cite[Lemma 5.1]{rae03} that if $H\subseteq \la^0$ is hereditary, then so is $\Sigma(H)$. We denote by $\mathcal{H}_\la$ the collection of saturated hereditary subsets of $\la^0$. Note that $\mathcal{H}_\la$ has a lattice structure by the operations
\begin{align*}
H_1\wedge H_2&:=H_1\cap H_2\\
H_1\vee H_2&:=\Sigma\big(H_1\cup H_2\big).
\end{align*}
\end{Definition}

For every $H\in \mathcal{H}_\la$, we denote by $I_H$ the gauge-invariant ideal of $C^*(\la)$ generated by $\{s_v:v\in H\}$, which is
$$I_{H}=\overline{\mathrm{span}}\{s_{\lambda}s_{\lambda'}^*: \lambda,\lambda'\in\la, ~ s(\lambda)=s(\lambda')\in H\}.$$
Also, \cite[Theorem 5.2(b)]{rae03} shows that $\la\setminus \la H$ is a locally convex $k$-subgraph of $\la$ and we have $C^*(\la\setminus \la H)\cong C^*(\la)/I_H$. According to \cite[Theorem 5.2(a)]{rae03} the map $H\mapsto I_H$ is a lattice isomorphism from $\mathcal{H}_\la$ onto the set of gauge-invariant ideals of $C^*(\la)$. Moreover, for each ideal $I$ of $C^*(\Lambda)$, the set $H_{I}:=\{v\in \Lambda^{0} : s_{v}\in I\}$ is a saturated hereditary subset of $\la^0$.

\subsection{Removing sources from a higher-rank graph}\label{sec2.4}

Here, we briefly review the desourcification constructions of Farthing \cite{far08} and Webster \cite{web11} with some minor modifications. We refer the reader to \cite{far08} and \cite{web11} for details and proofs. Note that, in case $\la$ is a row-finite $k$-graph, \cite[Proposition 4.12]{web11} shows that the constructions of \cite[Section 4]{web11} and \cite[Section 3]{far08} produce isomorphic desourcifications.

Let $\la$ be a locally convex row-finite $k$-graph. Let $P_{\Lambda}$ be the set
$$P_{\Lambda}:=\{ (x;(m,n)) : x\in \Lambda^{\leq \infty} ~ \mathrm{and}~ m\leq n\in \N^{k}\}.$$
We define $(x;(m,n))\approx (y;(p,q))$ if and only if
\begin{enumerate}
\item[P1)] $ x(m\wedge d(x), n\wedge d(x))= y(p\wedge d(y), q\wedge d(y))$,
\item[P2)] $ m-m\wedge d(x)=p-p\wedge d(y)$, and
\item[P3)] $ n-m=q-p.$
\end{enumerate}
Then, $\approx$ is an equivalence relation on $P_{\Lambda}$ and we denote the equivalence class of $(x;(m,n))$ by $[x;(m,n)]$. According to \cite[Theorem 3.24]{far08}, $\overline{\Lambda}:=P_\la/\approx$ equipped with
\begin{align*}
&r([x;(m,n)]):=[x;(m,m)],\\
&s([x;(m,n)]):=[x;(n,n)], ~ \mathrm{and}\\
&d([x;(m,n)]):=n-m.
\end{align*}
is a $k$-graph containing no sources. Also, the composition of paths in $\ul$ is of the form
$$[x;(m,n)] \circ [y;(p,q)]=[x(0,n\wedge d(x))\sigma^{p\wedge d(y)}; (m,n+q-p)],$$
which makes sense only if $[x;(n,n)]=[y;(p,p)]$.
For simplicity, we usually denote each $(x;(m,m))$ by $(x;m)$, so $\overline{\la}^0=\{[x;m]: x\in \la^{\leq \infty}, m\in \N^k\}$ is the vertex set of $\overline{\la}$. Note that, for $(x;m)\approx (y;n)$ it suffices to check only (P1) and (P2) because (P3) is trivial.

The correspondence $\lambda\mapsto [\lambda x;(0,d(\lambda))]$, where $x\in s(\lambda)\la^{\leq\infty}$, is an injective $k$-graph morphism from $\la$ into $\ul$ \cite[Poposition 4.13]{web11}. Hence, we may regard $\la$ as a subgraph of $\overline{\la}$. Also, the map $\pi :\ul\rightarrow \Lambda$ defined by $\pi([x;(m,n)])=[x;(m\wedge d(x),n\wedge d(x))]$, for $x\in\la^{\leq\infty}$ and $m\leq n\in\N^k$, is a well-defined surjective $k$-graph morphism such that $\pi \circ\pi=\pi$. Recall from \cite[Theorem 3.26]{far08} that if $\la$ is row-finite, then so is $\overline{\la}$. The $k$-graph $\overline{\la}$ is called the \emph{desourcification of $\Lambda$}.

For every $x\in \la^{\leq \infty}$ and $p\in \N^k$, we may define an infinite path $[x;(p,\infty)]:\Omega_k\rightarrow \ul$ by
$$[x;(p,\infty)](m,n):=[x;(p+m,p+n)]$$
for all $m\leq n\in \N^k$. The following Lemma says that every boundary path in $\ul$ (which is an infinite path) is of the form $[x;(p,\infty)]$.

\begin{lem}[See {\cite[Proposition 2.3]{Rob09}}]\label{lem2.4}
Let $\la$ be a locally convex row-finite $k$-graph and let $\ul$ be the desourcification of $\la$. For each $z\in \ul^\infty$, there exist $\pi(z)\in \la^{\leq \infty}$ and $p_z\in \N^k$ such that $p_z\wedge d(\pi(z))=0$ and $z=[\pi(z);(p_z,\infty)]$. Moreover, if $z(0)\in \la^0$, then $p_z=0$.
\end{lem}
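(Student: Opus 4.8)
The plan is to realize $z$ as the lift along $\pi$ of a boundary path in $\la$, together with an offset recording how far the range vertex of $z$ sits inside an added tail. Since $\ul$ has no sources, $z$ is an infinite path, i.e. a degree preserving functor $z\colon\Omega_k\to\ul$ determined by the coherent family $\{z(0,n)\}_{n\in\N^k}$. First I would push this family forward under the morphism $\pi$ and set $\lambda_n:=\pi(z(0,n))\in\la$. Applying $\pi$ to the factorization $z(0,n')=z(0,n)\,z(n,n')$ and using that $\pi$ is a functor gives $\lambda_{n'}=\lambda_n\,\pi(z(n,n'))$, so by the factorization property $\lambda_n$ is an initial segment of $\lambda_{n'}$ whenever $n\le n'$. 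Hence $\{\lambda_n\}$ is consistent and assembles into a degree preserving functor $x\colon\Omega_{k,\mu}\to\la$ with $\mu:=\sup_n d(\lambda_n)\in(\N\cup\{\infty\})^k$ and $x(0,d(\lambda_n))=\lambda_n$. To see that $x$ is a genuine boundary path I would use that each $z(0,n)$ has a representative $[x_n;(a_n,b_n)]$ with $x_n\in\la^{\leq \infty}$, so that $\lambda_n=x_n(a_n\wedge d(x_n),b_n\wedge d(x_n))$; in a coordinate $i$ with $\mu_i<\infty$ the degree $d(\lambda_n)_i$ stabilises at $\mu_i=d(x_n)_i$ for $n_i$ large, so the source condition that $x_n$ satisfies as an element of $\la^{\leq \infty}$ passes to $x$. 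I then set $\pi(z):=x$.

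Next I would read off the offset from the range vertex $z(0)$. Writing any representative $z(0)=[w;q]$ and passing to $[\sigma^{q\wedge d(w)}(w);\,q-q\wedge d(w)]$ puts it in normal form $z(0)=[w;p_z]$ with $p_z\wedge d(w)=0$; condition (P2) shows the resulting $p_z\in\N^k$ is independent of the chosen representative, hence an invariant of $z$. I take this $p_z$ as the candidate offset, and note $w(0)=\pi(z)(0)$ since both equal $\pi(z(0))=r(\pi(z))$.

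It then remains to prove that $p_z\wedge d(\pi(z))=0$ and that $z=[\pi(z);(p_z,\infty)]$. The latter is the assertion $z(0,n)=[\pi(z);(p_z,p_z+n)]$ for every $n$, which by the description of $\ul$ is exactly conditions (P1)--(P3) for the two representatives. Conditions (P2) and (P3) are immediate offset bookkeeping, and (P1) reduces to the single degree identity $d\big(\pi(z(0,n))\big)=(p_z+n)\wedge d(\pi(z))$, whose $n=0$ instance is precisely $p_z\wedge d(\pi(z))=0$. Establishing this identity --- equivalently, controlling in each coordinate whether the $n$-th initial segment of $z$ still lies over $\la$ or has already entered a tail --- is the step I expect to be the main obstacle, since it is where local convexity of $\la$ together with the absence of sources in $\ul$ must be used to relate the stalling of $d(\lambda_n)$ in the directions with $\mu_i<\infty$ to the offset $p_z$.

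Finally, for the ``moreover'' statement, suppose $z(0)\in\la^0$. Since $\pi$ restricts to the identity on $\la^0$ (as $\pi\circ\pi=\pi$ and $\pi$ fixes the embedded copy of $\la$), we get $z(0)=\pi(z(0))=[\pi(z);0]$. Comparing this with the normal form $z(0)=[\pi(z);p_z]$ through condition (P2) yields $p_z=p_z-p_z\wedge d(\pi(z))=0$, as required.
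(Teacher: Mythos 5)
There is a genuine gap, and you have named it yourself. The paper does not prove the first assertion of the lemma at all: it simply cites \cite[Proposition 2.3]{Rob09} for the existence of $\pi(z)$ and $p_z$ with $p_z\wedge d(\pi(z))=0$ and $z=[\pi(z);(p_z,\infty)]$, and only proves the ``moreover'' clause. You instead attempt to reconstruct the first assertion, but your argument stops exactly at the load-bearing step: the degree identity $d\bigl(\pi(z(0,n))\bigr)=(p_z+n)\wedge d(\pi(z))$, which you correctly observe is equivalent to (P1) for the two representatives and, at $n=0$, to the normalization $p_z\wedge d(\pi(z))=0$. You explicitly describe this as ``the step I expect to be the main obstacle'' without resolving it. Since everything else in your construction hinges on it --- in particular, your claim that the source condition on the representatives $x_n$ ``passes to $x$'' presupposes that $d(\lambda_n)_i$ stabilises at $d(x_n)_i$, which is another face of the same unproved identity, and the compatibility between the path $x$ built from the $\lambda_n$ and the offset $p_z$ read off from $z(0)$ is never actually established --- the first assertion is not proved by your proposal. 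Either supply that argument (it is essentially the content of \cite[Proposition 2.3]{Rob09}, proved there by induction on initial segments using the explicit form of representatives in $P_\Lambda$) or cite it, as the paper does.

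Your treatment of the ``moreover'' statement is correct and coincides with the paper's proof: from $z(0)\in\la^0$ one gets $[\pi(z);p_z]=\pi([\pi(z);p_z])=[\pi(z);p_z\wedge d(\pi(z))]=[\pi(z);0]$, and (P2) then forces $p_z=p_z-p_z\wedge d(\pi(z))=0$.
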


\begin{proof}
The first statement is \cite[Proposition 2.3]{Rob09}. For the second, if $[\pi(z);p_z]=z(0)\in \la^0$, then $\pi([\pi(z);p_z])=[\pi(z);p_z]$. So, we have
$$[\pi(z);p_z]=\pi\big([\pi(z);p_z]\big)=[\pi(z);p_z\wedge d(\pi(z))]=[\pi(z);0].$$
Now condition (P2) for $(\pi(z);p_z)\approx (\pi(z);0)$ implies that $p_z=p_z-p_z\wedge d(\pi(z))=0$.
\end{proof}

In other words, we may define the map $\pi:\ul^\infty \rightarrow \la^{\leq \infty}$ by
$$\pi([x;(p,\infty)])(m,n):=x(p\wedge d(x)+m,p\wedge d(x)+n)$$
for $m\leq n\leq d(x)-p\wedge d(x)$. Thus, we have $\pi([x;(p,\infty)])=\sigma^{p\wedge d(x)}(x)$.

If $\{s_\lambda:\lambda\in \overline{\la}\}$ is a generating Cuntz-Krieger $\overline{\la}$-family for $C^*(\overline{\la})$, then $\{s_\lambda:\lambda\in \la\}$ is a universal Cuntz-Krieger $\la$-family \cite[Theorem 3.28]{far08}. Hence, $C^*(\{s_\lambda:\lambda\in \la\})=C^*(\la)$. Moreover, if we set $P:=\sum_{v\in\la^0}s_v$ as a projection in the multiplier algebra $\mathcal{M}(C^*(\overline{\la}))$, then \cite[Theorem 3.30]{far08} and \cite[Theorem 6.3]{web11} say that $P C^*(\overline{\la}) P$ is a full corner in $C^*(\ul)$ such that $PC^*(\ul) P=C^*(\la)$. In particular, the $C^*$-algebras $C^*(\la)$ and $C^*(\ul)$ are Morita-equivalent. Therefore, the map $J\mapsto PJP$ is an isomorphism from the lattice of ideals in $C^*(\ul)$ onto that of $C^*(\la)$.

\begin{Remark}
There is an error in the proof \cite[Theorem 3.30]{far08}; however, the proof may be considered for locally convex row-finite $k$-graphs. Webster resolved this result in \cite[Theorem 6.3]{web11} (see \cite[Remark 6.2]{web11}).
\end{Remark}

%%%%%%%%%%%%%%%%%%%%%%%%%%%%%%%%%%%%%%%%%%%%%%%%

\section{equivalent paths and periodicity}

For any 1-graph $E$, Hong and Szyma$\acute{\mathrm{n}}$ski in \cite{hon04} described primitive ideals of $C^*(E)$ by specific collections of vertices $T$ in $E$, called \emph{maximal tails}, and periodicity in the quotient graphs $E T$. Recall that periodicity in  1-graphs can be determined by cycles with no entrances \cite[Lemma 2.9]{kan14}. Although we know that if a $k$-graph $\la$ is periodic (see Definition \ref{defn3.2} below), then it contains a generalized cycle with no entrances \cite[Lemma 4.4]{lar17}, but structure of periodic $k$-graphs is more complicated compared with 1-graphs and the arguments of \cite{hon04} could not be generalized for $k$-graphs in general.

To deal with the periodicity, Carlsen et al. in \cite{car14} used the following equivalent relation on $\la$ which is inspired from \cite{dav09}.

\begin{Definition}[\cite{car14}]
Let $\la$ be a row-finite $k$-graph. We set an equivalent relation on $\la$ by
$$\mu\sim_{\la} \nu ~~ \Longleftrightarrow ~~ s(\mu)=s(\nu) ~~ \mathrm{ and} ~~ \mu x=\nu x ~ \mathrm{for ~ all ~} x\in s(\mu)\la^{\leq\infty}.$$
(The subscript in $\sim_\la$ indicates the underlying $k$-graph.) We then define
$$\mathrm{Per}(\la):=\{d(\mu)-d(\nu): \mu,\nu\in\la ~ \mathrm{and}~ \mu\sim_{\la}\nu\},$$
which is a subset of $\mathbb{Z}^k$.
\end{Definition}

There are several conditions for aperiodicity in the literature, which are equivalent for finitely aligned $k$-graphs (cf. \cite[Proposition 2.11]{sho12}). We consider the following from \cite{rae03} which was called \emph{Condition (B)} there.

\begin{Definition}\label{defn3.2}
Let $\la$ be a locally convex row-finite $k$-graph. We say that $\la$ is \emph{aperiodic} if for every $v\in \la^0$, there exists $x\in v\la^{\leq \infty}$ such that $\mu\neq\nu \in \la v$ implies $\mu x\neq \nu x$. If $\la$ is not aperiodic, it is called \emph{periodic}.
\end{Definition}

\begin{prop}\label{prop3.3}
Let $\la$ be a locally convex row-finite $k$-graph. Then the following are equivalent.
\begin{enumerate}
  \item $\la$ is aperiodic.
  \item $\mathrm{Per}(\la)=\{0\}$.
  \item There are no distinct $\mu,\nu\in \la$ such that $\mu\sim_\la \nu$.
\end{enumerate}
\end{prop}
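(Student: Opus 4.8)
The plan is to prove $(2)\Leftrightarrow(3)$ and $(1)\Leftrightarrow(3)$; since $\la$ is locally convex and row-finite, every vertex admits at least one boundary path, so $v\la^{\leq\infty}\neq\emptyset$ for all $v\in\la^0$ and the relation $\sim_\la$ is the common thread. The first thing I would record is a \emph{degree lemma}: if $\mu\sim_\la\nu$ with $\mu\neq\nu$, then $d(\mu)\neq d(\nu)$. Indeed, pick any $x\in s(\mu)\la^{\leq\infty}$; if we had $d(\mu)=d(\nu)=:a$, then evaluating the equality $\mu x=\nu x$ on $(0,a)$ would give $\mu=(\mu x)(0,a)=(\nu x)(0,a)=\nu$, a contradiction. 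With this, $(2)\Leftrightarrow(3)$ is immediate: the vertex pairs $v\sim_\la v$ show $0\in\mathrm{Per}(\la)$, so $(3)$ forces $\mathrm{Per}(\la)=\{0\}$, while a distinct pair $\mu\sim_\la\nu$ contributes $d(\mu)-d(\nu)\neq0$ to $\mathrm{Per}(\la)$ by the lemma, giving $\neg(3)\Rightarrow\neg(2)$.

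For $(1)\Rightarrow(3)$ I would argue the contrapositive, which is easy. If $\mu\neq\nu$ and $\mu\sim_\la\nu$, put $v:=s(\mu)=s(\nu)$. Then $\mu,\nu$ are distinct elements of $\la v$ with $\mu x=\nu x$ for every $x\in v\la^{\leq\infty}$, so no boundary path out of $v$ separates them and the aperiodicity condition fails at $v$; hence $\la$ is periodic.

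The implication $(3)\Rightarrow(1)$ is the heart of the matter, and I would again work contrapositively. Assume $\la$ is periodic, witnessed at a vertex $v$: for each $x\in v\la^{\leq\infty}$ there are distinct $\mu_x\neq\nu_x\in\la v$ with $\mu_x x=\nu_x x$. \textbf{The main obstacle} is that this failing pair depends on $x$, whereas $(3)$ asks for a single pair failing against \emph{all} boundary paths. I would overcome this with a Baire category argument on $v\la^{\leq\infty}$. For distinct $\mu,\nu\in\la v$ set $F_{\mu,\nu}:=\{x\in v\la^{\leq\infty}:\mu x=\nu x\}$; as the equalizer of the two continuous concatenation maps $x\mapsto\mu x$ and $x\mapsto\nu x$ into the Hausdorff space $\la^{\leq\infty}$, each $F_{\mu,\nu}$ is closed. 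Periodicity at $v$ says precisely that the countable family $\{F_{\mu,\nu}\}_{\mu\neq\nu}$ covers $v\la^{\leq\infty}$, which is a Baire space since its cylinder sets form a basis of compact open sets. Hence some $F_{\mu_0,\nu_0}$ has nonempty interior and so contains a cylinder $Z(\tau)=\{\tau y:y\in s(\tau)\la^{\leq\infty}\}$ with $\tau\in v\la$. For every $y\in s(\tau)\la^{\leq\infty}$ we then have $(\mu_0\tau)y=\mu_0(\tau y)=\nu_0(\tau y)=(\nu_0\tau)y$, so $\mu_0\tau\sim_\la\nu_0\tau$; and $\mu_0\tau\neq\nu_0\tau$ because $\mu_0\neq\nu_0$ (comparing the segments on $(0,d(\mu_0))$ when $d(\mu_0)=d(\nu_0)$, and comparing degrees otherwise). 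This distinct equivalent pair gives $\neg(3)$, completing $\neg(1)\Rightarrow\neg(3)$.

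The only inputs requiring care are the topological facts underlying the Baire step: that $v\la^{\leq\infty}$ is nonempty and is a Baire space in which the cylinders $Z(\tau)$ (for $\tau\in v\la$) form a basis of compact open sets, and that concatenation $x\mapsto\mu x$ is continuous. These are standard for row-finite locally convex $k$-graphs, and once they are in place the remaining verifications are routine.
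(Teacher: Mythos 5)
Your proof is correct, and it diverges from the paper's in one substantive place. The equivalence $(2)\Leftrightarrow(3)$ is handled identically in spirit: your ``degree lemma'' is exactly the paper's computation $\mu=\mu x(0,m)=\nu x(0,m)=\nu$ when $d(\mu)=d(\nu)$, so nothing to compare there. The real difference is $(3)\Rightarrow(1)$. The paper simply declares $(1)\Leftrightarrow(3)$ ``immediate,'' which is accurate for $(1)\Rightarrow(3)$ but not for the converse: passing from ``every distinct pair $\mu\sim_\Lambda\nu$ is ruled out'' to ``for each $v$ there is a \emph{single} boundary path separating \emph{all} pairs in $\Lambda v$'' is a genuine quantifier interchange. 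The paper is implicitly leaning on the equivalence of aperiodicity conditions it cites just beforehand (\cite[Proposition 2.11]{sho12}); you instead prove the hard direction from scratch via a Baire category argument on $v\Lambda^{\leq\infty}$, covering it by the closed equalizers $F_{\mu,\nu}$ and extracting a cylinder $Z(\tau)\subseteq F_{\mu_0,\nu_0}$, whence $\mu_0\tau\sim_\Lambda\nu_0\tau$ are distinct equivalent paths. This is a standard and valid technique (it is essentially how the cited equivalences are proved), and your argument checks out, including the verification that $\mu_0\tau\neq\nu_0\tau$. What your route buys is self-containedness at the price of importing topological facts about the boundary path space --- that $v\Lambda^{\leq\infty}$ is a nonempty locally compact Hausdorff (hence Baire) space in which the cylinders $Z(\tau)$, $\tau\in v\Lambda$, form a basis of compact open sets, and that concatenation is continuous; these do hold for row-finite locally convex $k$-graphs and are established in \cite{web11}, but you should cite them explicitly rather than only calling them ``standard,'' since they are exactly where the row-finiteness and local convexity hypotheses enter.
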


\begin{proof}
Implications (1) $\Leftrightarrow$ (3) and (3) $\Rightarrow$ (2) are immediate. To see (2) $\Rightarrow $ (3), let $\mu\sim_\la \nu$. Then $d(\mu)-d(\nu)\in \mathrm{Per}(\la)=\{0\}$, and so $d(\mu)=d(\nu)$. Fix $x\in s(\mu)\la^{\leq \infty}$. Then for $m:=d(\mu)=d(\nu)$ we have $\mu=\mu x(0,m)=\nu x(0,m)=\nu$, giving (3).
\end{proof}

In the next proposition, we will relate equivalent paths in a $k$-graph $\la$ with those in its desourcification $\overline{\la}$. Before that, we state a simple lemma.

\begin{lem}\label{lem3.4}
Let $\la$ be a row-finite $k$-graph and $\ul$ the desourcification of $\la$. Suppose $\mu,\nu\in\ul$ such that $\mu\sim_{\ul} \nu$. If $\mu=[x;(m,m+d(\mu))]$ for some $x\in \la^{\leq \infty}$ and $m\in \N^k$, then $\nu=[x;(m,m+d(\nu))]$.
\end{lem}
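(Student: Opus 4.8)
The plan is to use the characterization of the equivalence relation $\sim_{\ul}$ together with the explicit description of paths, vertices, and the equivalence $\approx$ in the desourcification $\ul$. The statement says that if $\mu\sim_{\ul}\nu$ and $\mu$ has a representative of the form $[x;(m,m+d(\mu))]$ with $x\in\la^{\leq\infty}$, then $\nu$ can be written using the \emph{same} $x$ and the \emph{same} starting index $m$, namely $\nu=[x;(m,m+d(\nu))]$. Since $\mu\sim_{\ul}\nu$ forces $s(\mu)=s(\nu)$ and $r(\mu)=r(\nu)$ (the latter because equivalent paths, when precomposed with any boundary path, agree, so in particular they share the same range vertex), the essential content is that $\nu$ has range $r(\mu)=[x;(m,m)]$. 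I would extract from this a representative of $\nu$ with the correct first coordinate.

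First I would record the two immediate consequences of $\mu\sim_{\ul}\nu$: by definition $s(\mu)=s(\nu)$, and since $\mu z=\nu z$ for all $z\in s(\mu)\ul^{\infty}$, evaluating both sides at $0$ gives $r(\mu)=r(\nu)$. In $\ul$ we have $r(\mu)=[x;(m,m)]$ from the given representative of $\mu$. So $r(\nu)=[x;(m,m)]$ as well. Now write $\nu=[y;(p,p+d(\nu))]$ for some representative, so that $r(\nu)=[y;(p,p)]$. The equality $r(\nu)=r(\mu)$ becomes $[y;(p,p)]=[x;(m,m)]$, i.e.\ $(y;(p,p))\approx(x;(m,m))$.

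Next I would unpack this vertex-level equivalence using conditions (P1)--(P3); as noted in the excerpt, for vertices it suffices to verify (P1) and (P2). Condition (P1) gives $x(m\wedge d(x),m\wedge d(x))=y(p\wedge d(y),p\wedge d(y))$, and (P2) gives $m-m\wedge d(x)=p-p\wedge d(y)$. The goal is to replace the representative $(y;(p,p+d(\nu)))$ of $\nu$ by $(x;(m,m+d(\nu)))$, which amounts to checking that $(x;(m,m+d(\nu)))\approx(y;(p,p+d(\nu)))$. Here (P3) is automatic since both have degree $d(\nu)$, and (P2) is exactly the condition already obtained. The remaining point is (P1) at the level of these paths: I must show $x(m\wedge d(x),(m+d(\nu))\wedge d(x))=y(p\wedge d(y),(p+d(\nu))\wedge d(y))$.

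The main obstacle is precisely this last verification of (P1) for the full paths rather than just the ranges, because the truncations $(m+d(\nu))\wedge d(x)$ and $(p+d(\nu))\wedge d(y)$ interact subtly with the possibly finite degrees $d(x)$ and $d(y)$. I expect to handle it by exploiting $\sim_{\ul}$ more fully: since $\mu z=\nu z$ for every boundary path $z\in s(\mu)\ul^{\infty}$, and since $s(\mu)=s(\nu)$, I would evaluate a suitable such $z$ (for instance one built from a boundary extension of $x$ in $\la$, lifted to $\ul$ via the embedding $\lambda\mapsto[\lambda x;(0,d(\lambda))]$) and compare the two resulting infinite paths coordinatewise. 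Because these infinite paths in $\ul$ are of the form $[\,\cdot\,;(\,\cdot\,,\infty)]$ by Lemma~\ref{lem2.4}, and the vertex-level data (P1), (P2) already pin down the initial segment, comparing $\mu z$ with $\nu z$ on the block of degree $d(\mu)\vee d(\nu)$ should force the desired path-level (P1), after which the factorization property in $\ul$ lets me conclude $\nu=[x;(m,m+d(\nu))]$.
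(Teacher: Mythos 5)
Your decisive step --- composing $\mu$ and $\nu$ with an infinite path lifted from $x$ and invoking the factorization property --- is exactly the paper's proof: taking $z=[x;(m+d(\mu),\infty)]\in s(\mu)\ul^{\infty}$, the composition formula gives $\mu\circ z=[x;(m,\infty)]=\nu\circ z$, and the factorization property applied to $[x;(m,\infty)]$ yields $\nu=[x;(m,m+d(\nu))]$ at once. The preliminary reduction to checking (P1)--(P3) for $(x;(m,m+d(\nu)))\approx(y;(p,p+d(\nu)))$ is an unnecessary detour, since the factorization argument you sketch at the end already delivers the full conclusion directly rather than merely the path-level (P1).
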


\begin{proof}
By $\mu\sim_{\ul} \nu$, we can write
\begin{align*}
[x;(m,\infty)]&=[x;(m,m+d(\mu))] \circ [x;(m+d(\mu),\infty)]\\
&= \mu \circ [x;(m+d(\mu),\infty)]\\
&= \nu \circ [x;(m+d(\mu),\infty)].
\end{align*}
So, we must have $\nu=[x;(m,m+d(\nu))]$ by the factorization property.
\end{proof}

\begin{prop}\label{prop3.5}
Let $\Lambda$ be a locally convex, row-finite $k$-graph and let $\ul$ be the desourcification of $\la$. For any $\mu,\nu\in\ul$ with $r(\mu)=r(\nu)$, the following are equivalent:
\begin{enumerate}
  \item $\mu\sim_{\ul} \nu$;
  \item $\pi(\mu)\sim_{\ul} \pi(\nu)$ and $d(\mu)-d(\pi(\mu))=d(\nu)-d(\pi(\nu))$;
  \item $\pi(\mu)\sim_{\la} \pi(\nu)$ and $d(\mu)-d(\pi(\mu))=d(\nu)-d(\pi(\nu))$.
\end{enumerate}
\end{prop}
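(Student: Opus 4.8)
The plan is to establish the equivalences $(1)\Leftrightarrow(2)$ and $(2)\Leftrightarrow(3)$ separately. Fix a representative $\mu=[x;(m,n)]$ with $x\in\la^{\leq\infty}$ and set $\ell:=d(x)$ (and analogously $\nu=[y;(p,q)]$); a direct computation from the definition of $\pi$ shows that
\[
d(\mu)-d(\pi(\mu))=(n-\ell)^+-(m-\ell)^+ ,
\]
where $(\cdot)^+$ denotes the coordinatewise positive part. Since the degree condition $d(\mu)-d(\pi(\mu))=d(\nu)-d(\pi(\nu))$ appears verbatim in both $(2)$ and $(3)$, the content of $(2)\Leftrightarrow(3)$ is exactly that $\sim_{\ul}$ and $\sim_{\la}$ agree on the embedded copy of $\la$, while $(1)\Leftrightarrow(2)$ compares $\mu,\nu$ with their projections $\pi(\mu),\pi(\nu)$.

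For $(2)\Leftrightarrow(3)$ I would show that, for embedded paths $\pi(\mu),\pi(\nu)$ with common source $u:=s(\pi(\mu))=s(\pi(\nu))\in\la^0$, one has $\pi(\mu)\sim_{\ul}\pi(\nu)\Leftrightarrow\pi(\mu)\sim_{\la}\pi(\nu)$. By Lemma \ref{lem2.4}, the assignment $w\mapsto[w;(0,\infty)]$ is a bijection from $u\la^{\leq\infty}$ onto $u\,\ul^{\infty}$ with inverse $\pi$, and the composition formula gives $\pi(\mu)\cdot[w;(0,\infty)]=[\pi(\mu)w;(0,\infty)]$. Hence $\pi(\mu)z=\pi(\nu)z$ for all $z\in u\,\ul^{\infty}$ if and only if $\pi(\mu)w=\pi(\nu)w$ for all $w\in u\la^{\leq\infty}$, which is the claim.

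For $(1)\Leftrightarrow(2)$, assuming $(1)$ I would use Lemma \ref{lem3.4} to take the common representatives $\mu=[x;(m,n)]$ and $\nu=[x;(m,q)]$; then condition (P2) for $s(\mu)=s(\nu)$ yields $(n-\ell)^+=(q-\ell)^+$ (the degree condition), and (P1) gives $x(n\wedge\ell)=x(q\wedge\ell)=:u$. Parametrising $z\in s(\mu)\,\ul^{\infty}$ as $z=[w;((n-\ell)^+,\infty)]$ with $w\in u\la^{\leq\infty}$, and letting $W:=[w;(0,\infty)]\in s(\pi(\mu))\,\ul^{\infty}$ be the corresponding test path, the composition formula gives
\begin{align*}
\mu z&=[x(0,n\wedge\ell)\,w;(m,\infty)], & \nu z&=[x(0,q\wedge\ell)\,w;(m,\infty)],\\
\pi(\mu)W&=[x(0,n\wedge\ell)\,w;(m\wedge\ell,\infty)], & \pi(\nu)W&=[x(0,q\wedge\ell)\,w;(m\wedge\ell,\infty)].
\end{align*}
Writing $X:=x(0,n\wedge\ell)w$ and $Y:=x(0,q\wedge\ell)w$, the equality $[X;(m,\infty)]=[Y;(m,\infty)]$ reduces, via (P1) and (P2), to $m\wedge d(X)=m\wedge d(Y)$ together with $\sigma^{m\wedge d(X)}(X)=\sigma^{m\wedge d(X)}(Y)$. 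Since $m\wedge d(X)=m\wedge\ell\wedge d(X)$ in our situation (and likewise for $Y$), this is the same condition as $[X;(m\wedge\ell,\infty)]=[Y;(m\wedge\ell,\infty)]$; that is, $\mu z=\nu z\Leftrightarrow\pi(\mu)W=\pi(\nu)W$. Quantifying over the bijectively corresponding test paths yields $(1)\Leftrightarrow(2)$, the converse direction recovering the common representative from the degree condition and the fact that a path in $\ul$ is determined by its range, degree, and $\pi$-image.

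The main obstacle is the bookkeeping for the \emph{virtual} coordinates, namely those $i$ with $n_i>\ell_i$ in which $\mu$ extends past the boundary degree of $x$. The key structural input is that in such a coordinate $(n\wedge\ell)_i=\ell_i=d(x)_i$, so the boundary-path condition forces $u\la^{e_i}=\emptyset$, whence $d(w)_i=0$ for every $w\in u\la^{\leq\infty}$. This single fact is what makes the parametrisation above legitimate (it gives $(n-\ell)^+\wedge d(w)=0$, so no shift of $w$ occurs in the compositions) and what yields $d(X)_i=\ell_i$ and hence the identity $m\wedge d(X)=m\wedge\ell\wedge d(X)$. It is precisely here that the degree condition $(n-\ell)^+=(q-\ell)^+$ enters, as the compatibility ensuring the two comparisons agree. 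Carefully verifying these coordinatewise claims, and keeping the shifts between $\la^{\leq\infty}$ and $\ul^{\infty}$ aligned, is where essentially all of the work lies.
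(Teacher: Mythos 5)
Your overall strategy is sound and is recognizably a variant of the paper's own argument: both proofs rest on Lemma \ref{lem3.4}, on Lemma \ref{lem2.4} (canonical form of infinite paths in $\ul$), on the composition formula, and crucially on the observation you single out — that in a coordinate $i$ with $n_i>d(x)_i$ the boundary-path condition forces $s(\pi(\mu))\la^{e_i}=\emptyset$, so continuations $w$ have $d(w)_i=0$ and no shift of $w$ occurs. The main organizational difference is that the paper first normalizes the representative so that $m\wedge d(x)=0$ (which collapses $(m-\ell)^+$ to $m$ and shortens the bookkeeping, at the cost of an appeal to \cite[Lemma 2.2]{Rob09} to identify test paths), whereas you keep general representatives and set up an explicit bijection $w\mapsto z=[w;((n-\ell)^+,\infty)]$ between $u\la^{\leq\infty}$ and $s(\mu)\ul^\infty$, reducing both $\mu z=\nu z$ and $\pi(\mu)W=\pi(\nu)W$ to the same pair of canonical-form conditions via the identity $m\wedge d(X)=(m\wedge\ell)\wedge d(X)$. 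Your $(2)\Leftrightarrow(3)$ argument is exactly the paper's. These computations all check out.

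The one genuine soft spot is the converse $(2)\Rightarrow(1)$. Your pointwise equivalence $\mu z=\nu z\Leftrightarrow\pi(\mu)W=\pi(\nu)W$ was derived after invoking Lemma \ref{lem3.4} to put $\mu$ and $\nu$ on a \emph{common} representative $x$, and Lemma \ref{lem3.4} presupposes $\mu\sim_{\ul}\nu$, i.e.\ statement $(1)$. Your proposed repair — that the common representative is recovered from the degree condition and the fact that a path in $\ul$ is determined by its range, degree, and $\pi$-image — does not apply here, because under hypothesis $(2)$ one only knows $\pi(\mu)\sim_{\ul}\pi(\nu)$, not $\pi(\mu)=\pi(\nu)$, so $\mu$ and $\nu$ need not share a $\pi$-image and that uniqueness fact gives nothing. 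Two fixes are available. The paper's: apply $\pi(\mu)\sim_{\ul}\pi(\nu)$ to the specific tail $[x;(d(\pi(\mu)),\infty)]$ and use the factorization property to conclude $\nu=[x;(m,m+d(\nu))]$, after which your computation goes through. Alternatively, stay within your framework and run the computation symmetrically with two representatives $\mu=[x;(m,n)]$, $\nu=[y;(p,q')]$: one first checks that $(2)$ together with $r(\mu)=r(\nu)$ forces $s(\mu)=s(\nu)$ (via $(n-\ell)^+=(q'-d(y))^+$), identifies the two parametrizations of $z$, and compares the offsets $m-m\wedge d(X)$ versus $m\wedge\ell-m\wedge d(X)$ (and likewise for $Y$), whose difference $(m-\ell)^+-(p-d(y))^+$ vanishes by $r(\mu)=r(\nu)$. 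Either way the statement is rescued, but as written the converse is not proved.
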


\begin{proof}
Write $\mu=[x;(m,m+d(\mu))]$ and $\nu=[y;(n,n+d(\nu))]$. For $q:=m\wedge d(x)$, we have
$$[x;(m,m+d(\mu))]=[\sigma^q(x);(m-q,m-q+d(\mu))]$$
such that
$$(m-q)\wedge d(\sigma^q(x))=\big(m-m\wedge d(x)\big)\wedge \big(d(x)-m\wedge d(x)\big)=m\wedge d(x)- m\wedge d(x)=0.$$
So, without loss of generality, we may suppose $\mu=[x;(m,m+d(\mu))]$ and $\nu=[y;(n,n+d(\nu))]$ such that $m\wedge d(x)=n \wedge d(y)=0$. Then
$$\pi(\mu)=[x;(m\wedge d(x),(m+d(\mu))\wedge d(x))]=[x;(0,d(\mu)\wedge d(x))]$$
and similarly $\pi(\nu)=[y;(0,d(\nu)\wedge d(y))]$. Moreover, we have $[x;(0,m)]=[y;(0,m)]$. Indeed, conditions (P1) and (P2) for $[x;m]=r(\mu)=r(\nu)=[y;n]$ imply $x(0)=y(0)$ and
$$m=m- m\wedge d(x)=n-n\wedge d(y)=n,$$
giving $(x;(0,m))\approx (y;(0,m))$.

(1) $\Longrightarrow$ (2): We suppose $\mu \sim_{\ul} \nu$. By Lemma \ref{lem3.4}, $\nu=[x;(m,m+d(\nu))]$ and $\pi(\nu)=[x;(0,d(\nu)\wedge d(x))]$. So, (P2) for $[x;m+d(\mu)]=s(\mu)=s(\nu)=[x;m+d(\nu)]$ yields that
\begin{align*}
m+d(\mu)-\big(m+d(\mu)\big)\wedge d(x)&=m+d(\nu)-\big(m+d(\nu)\big)\wedge d(x)\\
\Longrightarrow \hspace{1.5cm} d(\mu)-d(\pi(\mu))&=d(\nu)-d(\pi(\nu)).
\end{align*}

Now we prove $\pi(\mu)\sim_{\ul} \pi(\nu)$. Using Lemma \ref{lem2.4}, every infinite path in $\ul$ is of the form $[z;(p_z;\infty )]$ with $p_z\wedge d(z)=0$. Fix $[z;(p_z,\infty)]\in s(\pi(\mu))\ul^\infty$. In view of Lemma \ref{lem2.4}, we can assume $p_z=0$ because $[z;p_z]=s(\pi(\mu))\in \la^0$. Set
$$p:=m+d(\mu)-d(\pi(\mu))=m+d(\nu)-d(\pi(\nu)).$$
Since
\begin{align*}
d(\sigma^{d(\pi(\mu))}(x))\wedge p&=\big(d(x)-d(\pi(\mu))\big)\wedge \big(m+d(\mu)-d(\pi(\mu))\big)\\
&=\big(d(x)\wedge (m+d(\mu))\big)-d(\pi(\mu))=0,
\end{align*}
\cite[Lemma 2.2]{Rob09} implies $[\sigma^{d(\pi(\mu))}(x);(0,p)]=[z;(0,p)]$. Thus, one may compute
\begin{align*}
\pi(\mu)\circ [z;(0,\infty)]&=[x;(0,d(\pi(\mu)))] \circ [z;(0,p)] \circ [z;(p,\infty)]\\
&=[x;(0,d(\pi(\mu)))] \circ [\sigma^{d(\pi(\mu))}(x);(0,p)] \circ [z;(p,\infty)]\\
&=[x;(0,m+d(\mu))] \circ [z;(p,\infty)]\\
&= [x;(0,m)] \circ [x;(m,m+d(\mu))] \circ [z;(p,\infty)]\\
&=[x;(0,m)] \circ \mu \circ [z;(p,\infty)].
\end{align*}
Analogously, we have
$$\pi(\nu) \circ [z;(0,\infty)]=[x;(0,m)]\circ \nu \circ [z;(p,\infty)],$$
and hence $\pi(\mu) \circ [z;(0,\infty)]=\pi(\nu) \circ [z;(0,\infty)]$ by applying $\mu\sim_{\ul} \nu$. This establishes $\pi(\mu)\sim_{\ul} \pi(\nu)$.

(2) $\Longrightarrow$ (1): Assume that $\pi(\mu)\sim_{\ul} \pi(\nu)$ and $d(\mu)-d(\pi(\mu))=d(\nu)-d(\pi(\nu))$. We first claim
$$[x;(d(\pi(\mu)),m+d(\pi(\mu)))]=[y;(d(\pi(\nu)),m+d(\pi(\nu)))].$$
To this end, we can check conditions (P1)-(P3) for $\big(x;(d(\pi(\mu)),m+d(\pi(\mu)))\big)\approx \big(y;(d(\pi(\nu)),m+d(\pi(\nu)))\big)$. Indeed, the fact $[x;d(\pi(\mu))]=s(\pi(\mu))=s(\pi(\nu))=[y;d(\pi(\nu))]$ gives (P1), whereas (P2) is trivial because $d(\pi(\mu))\wedge d(x)=d(\pi(\mu))$ and $d(\pi(\nu))\wedge d(y)=d(\pi(\nu))$. Also, the relation
$$m+d(\mu)-d(\pi(\mu))=m+d(\nu)-d(\pi(\nu))$$
yields (P3), therefore the claim holds.

Furthermore, since $[x;(d(\pi(\mu)),\infty)]\in s(\pi(\mu))\ul^\infty$, we can write
\begin{align*}
[x;(0,\infty)]&=\pi(\mu) \circ [x;(d(\pi(\mu)),\infty)]\\
&=\pi(\nu) \circ [x;(d(\pi(\mu)),m+d(\mu))] \circ [x;(m+d(\mu),\infty)]\\
&\hspace{8cm} (\mathrm{by~} \pi(\mu)\sim_{\ul} \pi(\nu) )\\
&=[y;(0,d(\pi(\nu)))] \circ [y;(d(\pi(\nu)), m+d(\nu))] \circ [x;(m+d(\mu),\infty)]\\
&\hspace{8.5cm} (\mathrm{by~the~claim})\\
&=[y;(0, m+d(\nu))] \circ [x;(m+d(\mu),\infty)].
\end{align*}
So, we get $[x;(0,m+d(\nu))]=[y;(0,m+d(\nu))]$  by the factorization property. In particular, $\nu=[y;(m,m+d(\nu))]=[x;(m,m+d(\nu))]$.

Now fix an arbitrary $z\in s(\mu)\ul^\infty$. Then $[x;(d(\pi(\mu)),d(\mu))] \circ z \in s(\pi(\mu))\ul^\infty$. Hence
\begin{align*}
[x;(0,m)]\circ \mu \circ z&= [x;(0,m)]\circ [x;(m,m+d(\mu))] \circ z\\
&= [x;(0,d(\pi(\mu)))] \circ [x;(d(\pi(\mu)), m+d(\mu))] \circ z \\
&=\pi(\mu) \circ [x;(d(\pi(\mu)), m+d(\mu))] \circ z\\
&=\pi(\nu) \circ [x;(d(\pi(\mu)), m+d(\mu))] \circ z\\
&\hspace{5cm} (\mathrm{by~} \pi(\mu)\sim_{\ul} \pi(\nu) )\\
&=[y;(0,d(\pi(\nu)))] \circ [y;(d(\pi(\nu)), m+d(\nu))] \circ z\\
&=[y;(0,m)] \circ [y;(m,m+d(\mu))] \circ z\\
&=[x;(0,m)] \circ \nu \circ z.
\end{align*}
We therefore obtain $\mu \circ z=\nu \circ z$, following statement (1).

(2) $\Longrightarrow$ (3): Fix $y\in s(\pi(\mu))\la^{\leq \infty}$. Then $[y;(0,\infty)]\in s(\pi(\mu))\ul^\infty$. Note that $\pi\big([y;(0,\infty)]\big)=y$ because for every $p\leq q\leq d(y)$,
$$\pi\big([y;(0,\infty)]\big)(p,q)=\pi\big([y;(p,q)]\big)=[y;(p,q)]=y(p,q).$$
So, we have
\begin{align*}
\pi(\mu) \circ [y;(0,\infty)]&= \pi(\nu) \circ [y;(0,\infty)] \hspace{1cm} (\mathrm{by~} \pi(\mu)\sim_{\ul} \pi(\nu))\\
\Longrightarrow \hspace{1cm} \pi\big( \pi(\mu) \circ [y;(0,\infty)]\big)&=\pi\big( \pi(\nu) \circ [y;(0,\infty)]\big)\\
\Longrightarrow \hspace{1cm} \pi(\mu) \circ \pi\big([y;(0,\infty)]\big)&=\pi(\nu) \circ \pi\big([y;(0,\infty)]\big) \hspace{1cm} (\mathrm{by~} \pi\circ \pi=\pi)\\
\Longrightarrow \hspace{3.05cm} \pi(\mu)\circ y &= \pi(\nu) \circ y.
\end{align*}
This follows statement (3).

(3) $\Longrightarrow$ (2): Since $s(\pi(\mu))\in \la^0$, by Lemma \ref{lem2.4}, every infinite path in $s(\pi(\mu))\ul^\infty$ is of the form $[z;(0,\infty)]$ with $z(0)=s(\pi(\mu))$. For every such $[z;(0,\infty)]\in s(\pi(\mu))\ul^\infty$, we have
\begin{align*}
\pi(\mu)\circ [z;(0,\infty)]&=[\pi(\mu)z;(0,\infty)]\\
&=[\pi(\nu)z;(0,\infty)] \hspace{1cm} (\mathrm{by~} \pi(\mu)\sim_{\la} \pi(\nu))\\
&=\pi(\nu)\circ [z;(0,\infty)],
\end{align*}
giving $\pi(\mu)\sim_{\ul} \pi(\nu)$.
\end{proof}

The initial definition of maximal tails comes from \cite{bat00} for the 1-graph $C^*$-algebras (see \cite{kan14,car14} also).

\begin{Definition}\label{defn3.6}
Let $\Lambda$ be a row-finite $k$-graph. A \emph{maximal tail} in $\la$ is a nonempty subset \emph{T} of $\Lambda^{0}$ satisfying the following conditions:
\begin{enumerate}
\item for every $\lambda\in\la$, $s(\lambda)\in T$ implies $r(\lambda)\in T$,
\item for every $v\in T$ and $n\in\N^k$, there exists $\lambda\in v\la^{\leq n}$ such that $s(\lambda)\in T$, and
\item for every $v,w\in T$, there exist $\mu\in v\la$ and $\nu\in w\la$ such that $s(\mu)=s(\nu)$.
\end{enumerate}
It is clear that if $T$ is a maximal tail in $\la$, then $\la^0\setminus T$ is hereditary and saturated. We write $\mathrm{MT}(\la)$ for the collection of maximal tails in $\la$.
\end{Definition}

\begin{cor}\label{cor3.7}
Let $\la$ be a locally convex row-finite $k$-graph, and let $\overline{\la}$ be the desourscification of $\la$. Then $\mathrm{Per}(\la)=\mathrm{Per}(\ul)$. Moreover, if $\la^0$ is a maximal tail, then $\mathrm{Per}(\la)$ is a subgroup of $\mathbb{Z}^k$.
\end{cor}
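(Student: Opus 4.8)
The plan is to read off the equality $\mathrm{Per}(\la)=\mathrm{Per}(\ul)$ directly from Proposition \ref{prop3.5}, and then to obtain the subgroup property by transporting the question to the sourceless graph $\ul$, where the corresponding statement for cofinal $k$-graphs is available from \cite{car14}.

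For the equality I would prove the two inclusions separately. For $\mathrm{Per}(\la)\subseteq \mathrm{Per}(\ul)$, take $\mu\sim_\la \nu$ in $\la$; note that $\mu\sim_\la\nu$ forces $r(\mu)=r(\nu)$ (since $\mu x=\nu x$ for all $x$), so under the embedding $\la\hookrightarrow\ul$ we still have $r(\mu)=r(\nu)$. As $\pi$ fixes the image of $\la$ pointwise, we have $\pi(\mu)=\mu$ and $\pi(\nu)=\nu$, whence $d(\mu)-d(\pi(\mu))=0=d(\nu)-d(\pi(\nu))$ and $\pi(\mu)\sim_\la \pi(\nu)$; the implication (3)$\Rightarrow$(1) of Proposition \ref{prop3.5} then gives $\mu\sim_{\ul}\nu$, so $d(\mu)-d(\nu)\in \mathrm{Per}(\ul)$. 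Conversely, for $\mathrm{Per}(\ul)\subseteq \mathrm{Per}(\la)$, take $\mu\sim_{\ul}\nu$ in $\ul$ (again $r(\mu)=r(\nu)$) and apply (1)$\Rightarrow$(3) of Proposition \ref{prop3.5} to get $\pi(\mu)\sim_\la \pi(\nu)$ together with $d(\mu)-d(\pi(\mu))=d(\nu)-d(\pi(\nu))$. Subtracting, the offset terms cancel and
\[
d(\mu)-d(\nu)=d(\pi(\mu))-d(\pi(\nu))\in \mathrm{Per}(\la).
\]
This establishes $\mathrm{Per}(\la)=\mathrm{Per}(\ul)$.

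For the second assertion, I would first record the two easy closure properties: $0=d(v)-d(v)\in\mathrm{Per}(\la)$ for any vertex $v$, and if $d(\mu)-d(\nu)\in\mathrm{Per}(\la)$ with $\mu\sim_\la\nu$ then, by symmetry of $\sim_\la$, also $d(\nu)-d(\mu)\in\mathrm{Per}(\la)$; thus $\mathrm{Per}(\la)$ is symmetric and contains $0$ with no hypothesis on $\la$. The content is closure under addition, and here I would pass to $\ul$ using the equality just proved. Since $\ul$ has no sources and $\ul^0$ is the full vertex set, conditions (1) and (2) of Definition \ref{defn3.6} hold automatically for $\ul^0$; moreover I would check that condition (3) transfers from $\la$ to $\ul$ — that is, that common descendants in $\la$, pulled back along the surjective morphism $\pi\colon\ul\to\la$ (and using Lemma \ref{lem2.4} to realise $\ul$-vertices along boundary paths), produce common descendants in $\ul$ — so that $\la^0\in\mathrm{MT}(\la)$ implies $\ul^0\in\mathrm{MT}(\ul)$; equivalently, $\ul$ is cofinal.

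With $\ul$ a cofinal, row-finite $k$-graph with no sources, I would invoke the result of Carlsen, Kang, Shotwell and Sims \cite{car14} that $\mathrm{Per}$ of such a graph is a subgroup of $\mathbb{Z}^k$; hence $\mathrm{Per}(\ul)$, and therefore $\mathrm{Per}(\la)=\mathrm{Per}(\ul)$, is a subgroup. I expect the genuine obstacle to be the transfer of cofinality to $\ul$: the ``new'' vertices $[x;m]$ with $m\not\le d(x)$ created by the desourcification lie beyond the sources of $\la$ and have no descendant inside $\la$, so one cannot simply project to $\la$, apply condition (3), and lift; the argument must instead route these vertices back into the genuine part of $\ul$ via $\pi$ and the factorisation property before comparing. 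Should a self-contained proof of closure be preferred over citing \cite{car14}, the hard point migrates to essentially the same place: two periodicity pairs sharing a common source vertex cannot be concatenated directly, and one needs cofinality to move the range of one pair onto the source of the other, which is exactly where the sourceless structure of $\ul$ is indispensable.
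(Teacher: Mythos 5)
Your argument is correct and follows essentially the same route as the paper: both containments are read off from Proposition \ref{prop3.5} exactly as you describe, and the subgroup property is obtained by transferring the maximal-tail hypothesis to $\ul^0$ and invoking \cite[Theorem 4.2(1)]{car14}. The only difference is that the transfer of the maximal-tail property from $\la^0$ to $\ul^0$, which you correctly flag as the one point needing verification, is handled in the paper by citing \cite[Lemma 4.3]{rad17} rather than by a direct argument.
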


\begin{proof}
For every $\mu,\nu\in \la$, Proposition \ref{prop3.5} implies that $\mu\sim_{\la} \nu$ if and only if $\mu\sim_{\ul} \nu$. This follows $\mathrm{Per}(\la)\subseteq \mathrm{Per}(\ul)$. For the reverse containment, suppose $\mu,\nu\in\ul$, $\mu\sim_{\ul} \nu$ and $d(\mu)-d(\nu) \in \mathrm{Per}(\ul)$. Implication (1) $\Rightarrow$ (3) of Proposition \ref{prop3.5} yields that $\pi(\mu)\sim_{\la} \pi(\nu)$ and $d(\mu)-d(\pi(\mu))=d(\nu)-d(\pi(\nu))$. Thus
$$d(\mu)-d(\nu)=d(\pi(\mu))-d(\pi(\nu))\in \mathrm{Per}(\la),$$
and we have $\mathrm{Per}(\ul)\subseteq \mathrm{Per}(\la)$.

By \cite[Lemma 4.3]{rad17}, if $\la^0$ is a maximal tail, then so is $\ul^0$. Thus, the second statement follows immediately from the first and \cite[Theorem 4.2(1)]{car14}.
\end{proof}

\begin{Definition}[\cite{car14}]
Given a row-finite $k$-graph $\la$, we define the set
\begin{multline*}
H_{\mathrm{Per}(\la)}:=\big\{v\in\la^0: \mathrm{for ~ all}~ \mu \in v\la ~ \mathrm{and} ~ m\in \mathbb{N}^k ~ \mathrm{with ~}\\ d(\mu)-m\in \mathrm{Per}(\la), ~ \mathrm{there ~ exists ~} \nu\in v\la^{m} ~ \mathrm{such ~ that ~} \mu\sim_{\la}\nu \big\}.
\end{multline*}
We will show in Corollary \ref{cor3.10} that $H_{\mathrm{Per}(\la)}$ is a hereditary subset of $\la^0$.
\end{Definition}

\begin{lem}\label{lem3.9}
Let $\la$ be a locally convex row-finite $k$-graph such that $\la^0$ is a maximal tail. Let $\ul$ be the desourcification of $\la$. Then for each $v\in \ul^0$, $v\in H_{\mathrm{Per}(\ul)}$ if and only if $\pi(v)\in H_{\mathrm{Per}(\ul)}$.
\end{lem}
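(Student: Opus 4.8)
The plan is to construct an explicit bijection between the paths emanating from $v$ and those emanating from $\pi(v)$ that preserves both degree and the relation $\sim_{\ul}$, and then to transport the defining condition of $H_{\mathrm{Per}(\ul)}$ across it. First I would fix a convenient representative: as in the proof of Proposition \ref{prop3.5}, write $v=[x;p]$ with $x\in\la^{\le\infty}$ and $p\wedge d(x)=0$, so that $w:=\pi(v)=[x;0]=x(0)\in\la^0$. The decisive structural observation is that $w$ is a $\la$-source in every coordinate direction $i\in\operatorname{supp}(p)$: indeed $p_i>0$ forces $d(x)_i=0$, and the boundary-path condition then gives $x(0)\la^{e_i}=\emptyset$. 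Consequently every $z\in w\la^{\le\infty}$ satisfies $d(z)_i=0$ for $i\in\operatorname{supp}(p)$, whence $p\wedge d(z)=0$ and, more importantly, $(p+n)\wedge d(z)=n\wedge d(z)$ for all $n\in\N^k$. I would also recall that $\mathrm{Per}(\ul)$ is a fixed subset of $\Z^k$ not depending on the chosen vertex.

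Next I would define $\Phi\colon w\ul\to v\ul$ by $\Phi([z;(0,n)])=[z;(p,p+n)]$. Here I use that every path with range $w$ (respectively $v$) admits a representative of the form $[z;(0,n)]$ (respectively $[z;(p,p+n)]$) with $z\in w\la^{\le\infty}$, exactly as in the rewriting $[x;(m,n)]=[\sigma^{m\wedge d(x)}(x);(m-m\wedge d(x),n-m\wedge d(x))]$ carried out at the start of the proof of Proposition \ref{prop3.5}. Using the identity $(p+n)\wedge d(z)=n\wedge d(z)$ one checks directly from conditions (P1)--(P3) that $\Phi$ is well defined and injective, and the representative statement above shows it is onto; since visibly $d(\Phi(\mu))=d(\mu)$, the map $\Phi$ restricts to a bijection $w\ul^n\to v\ul^n$ for each $n\in\N^k$. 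The same identity yields the two key equalities $\pi(\Phi(\mu))=\pi(\mu)$ and $d(\Phi(\mu))-d(\pi(\Phi(\mu)))=d(\mu)-d(\pi(\mu))$; that is, $\Phi$ preserves both the projection $\pi$ and the offset $d(\cdot)-d(\pi(\cdot))$.

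Granting this, Proposition \ref{prop3.5} finishes the argument. Since for paths with a common range one has $\mu\sim_{\ul}\nu$ if and only if $\pi(\mu)\sim_{\ul}\pi(\nu)$ and $d(\mu)-d(\pi(\mu))=d(\nu)-d(\pi(\nu))$, and $\Phi$ preserves both pieces of data (note $r(\Phi(\mu))=v$ for all $\mu$, so the hypothesis of Proposition \ref{prop3.5} is met), it follows that $\mu\sim_{\ul}\nu\Leftrightarrow\Phi(\mu)\sim_{\ul}\Phi(\nu)$. Thus $\Phi$ is a degree-preserving, $\sim_{\ul}$-preserving bijection of $w\ul$ onto $v\ul$. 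Transporting the definition of $H_{\mathrm{Per}(\ul)}$ across $\Phi$ then turns the condition ``for every $\mu\in w\ul$ and $m\in\N^k$ with $d(\mu)-m\in\mathrm{Per}(\ul)$ there exists $\nu\in w\ul^m$ with $\mu\sim_{\ul}\nu$'' into the identical condition phrased at $v$, giving $\pi(v)=w\in H_{\mathrm{Per}(\ul)}\Leftrightarrow v\in H_{\mathrm{Per}(\ul)}$, which is the assertion.

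The main obstacle is concentrated entirely in the second paragraph: verifying that $\Phi$ is well defined and that it preserves $\pi$ and the offset. All of these reduce to the single combinatorial fact $(p+n)\wedge d(z)=n\wedge d(z)$, which is precisely where the source structure of $w$ enters, so I would isolate that equality as a short preliminary computation and then read off well-definedness, injectivity, and the two preservation identities uniformly from it. The maximal-tail hypothesis on $\la^0$ (hence on $\ul^0$) is what guarantees that $\mathrm{Per}(\ul)$ is a group via Corollary \ref{cor3.7}, and although the present equivalence appears to be a structural consequence of the desourcification once the source-direction observation is in place, keeping that hypothesis costs nothing and keeps the statement aligned with its later use.
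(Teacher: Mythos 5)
Your argument is correct, but it is organized quite differently from the paper's. The paper proves the two implications asymmetrically: the direction $\pi(v)\in H_{\mathrm{Per}(\ul)}\Rightarrow v\in H_{\mathrm{Per}(\ul)}$ is dispatched by citing the hereditariness of $H_{\mathrm{Per}(\ul)}$ from \cite[Theorem 4.2]{car14} (which is where the maximal-tail hypothesis is actually consumed) together with $\pi(v)\ul v\neq\emptyset$, while the converse is an explicit lifting: given $\mu=[y;(0,d(\mu))]\in\pi(v)\ul$, the paper shifts it to $\mu'=[y;(p,p+d(\mu))]\in v\ul$ using \cite[Lemma 2.2]{Rob09}, invokes the hypothesis at $v$ to produce $\nu'$, pins down its form via Lemma \ref{lem3.4}, shifts back to $\nu=[y;(0,m)]$, and verifies $\mu\sim_{\ul}\nu$ through Proposition \ref{prop3.5}. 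You instead package the shift-by-$p$ operation as a single degree-preserving bijection $\Phi\colon \pi(v)\ul\to v\ul$ and show it preserves $\pi$ and the offset $d(\cdot)-d(\pi(\cdot))$, so that Proposition \ref{prop3.5} makes it $\sim_{\ul}$-preserving and both implications follow at once by transport of structure. Your key combinatorial input --- that $p\wedge d(x)=0$ forces $w=x(0)$ to be a $\la$-source in every direction of $\operatorname{supp}(p)$, hence $(p+n)\wedge d(z)=n\wedge d(z)$ for all $z\in w\la^{\leq\infty}$ --- is a clean substitute for the paper's appeal to \cite[Lemma 2.2]{Rob09}, and I verified that well-definedness, injectivity, and surjectivity of $\Phi$ all reduce to it as you claim. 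What your route buys is symmetry and independence from \cite[Theorem 4.2]{car14}: as you observe, the maximal-tail hypothesis is not actually needed for the equivalence itself, only for the nonemptiness and hereditariness statements used elsewhere. What it costs is that the explicit witness $\nu=[y;(0,m)]$, which the paper's proof exhibits and which is convenient to have on record for Corollary \ref{cor3.10}, is present only implicitly as $\Phi^{-1}(\nu')$.
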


\begin{proof}
We first fix $v\in \ul^0$ with $\pi(v)\in H_{\mathrm{Per}(\ul)}$. Since $\ul$ has no sources  and $\ul^0$ is a maximal tail by \cite[Lemma 4.3]{rad17}, Theorem 4.2 of \cite{car14} says that $H_{\mathrm{Per}(\ul)}$ is a hereditary subset of $\ul^0$. So, we have $v\in H_{\mathrm{Per}(\ul)}$ because $\pi(v)\ul v\neq \emptyset$.

Conversely, assume $v\in H_{\mathrm{Per}(\ul)}$. Let $\mu\in \pi(v)\ul$ and $m\in \N^k$ such that $d(\mu)-m\in \mathrm{Per}(\ul)$. To show $\pi(v)\in H_{\mathrm{Per}(\ul)}$, we must find $\nu \in \pi(v)\ul^m$ such that $\mu\sim_{\ul} \nu$. Write $v=[x;p]$ for some $x\in \la^{\leq \infty}$ and $p\in \N^k$ with $p\wedge d(x)=0$. Let $\mu=[y;(0,d(\mu))]$ for $y\in \pi(v) \la^{\leq \infty}$. Since $x,y\in \pi(v)\la^{\leq \infty}$ and $p\wedge d(x)=0$, \cite[Lemma 2.2]{Rob09} implies $p\wedge d(y)=0$ and $[x;(0,p)]=[y;(0,p)]$. In particular, $\pi(v)=[x;0]=[y;0]$ and $v=[x;p]=[y;p]$. If we set $\mu':=[y;(p,p+d(\mu))]$, then $\mu'\in v\ul^{d(\mu)}$ and $d(\mu')-m=d(\mu)-m\in \mathrm{Per}(\ul)$. So, there exists $\nu'\in v\ul^{m}$ such that $\mu'\sim_{\ul}\nu'$, which is of the form $\nu'=[y;(p,p+m)]$ by Lemma \ref{lem3.4}.

We will show that $\nu:=[y;(0,m)]$ is the desired path (i.e., $\mu\sim_{\ul}\nu$). For, by Proposition \ref{prop3.5}, it suffices to prove $\pi(\mu)\sim_{\ul}\pi(\nu)$ and $d(\mu)-d(\pi(\mu))=d(\nu)-d(\pi(\nu))$. Note that $\pi(\nu)=[y;(0,m\wedge d(y))]=\pi(\nu')$ and
$$d(\nu)-d(\pi(\nu))=m-m\wedge d(y)=d(\nu')-d(\pi(\nu')).$$
Thus
\begin{align*}
d(\mu)-d(\pi(\mu))&=d(\mu)-d(\mu)\wedge d(y)\\
&=d(\mu')-d(\pi(\mu'))\\
&=d(\nu')-d(\pi(\nu')) \hspace{1cm} (\mathrm{by} ~ \mu'\sim_{\ul}\nu' ~\mathrm{and ~ Proposition ~ \ref{prop3.5}})\\
&=d(\nu)-d(\pi(\nu)).
\end{align*}
Furthermore, since $\pi(\mu')=[y;(0,d(\mu)\wedge d(y))]=\pi(\mu)$, implication (1) $\Rightarrow$ (2) of Proposition \ref{prop3.5} yields $\pi(\mu)=\pi(\mu')\sim_{\ul}\pi(\nu')=\pi(\nu)$. Consequently, $\mu\sim_{\ul}\nu$ and we have $\pi(v)\in H_{\mathrm{Per}(\ul)}$.
\end{proof}

\begin{cor}\label{cor3.10}
Let $\la$ be a locally convex row-finite $k$-graph and $\overline{\la}$ its desourcification. Then $H_{\mathrm{Per}(\ul)}\cap \la^0=H_{\mathrm{Per}(\la)}$. In particular, if $\la^0$ is a maximal tail, then $H_{\mathrm{Per}(\la)}$ is a nonempty hereditary subset of $\la^0$.
\end{cor}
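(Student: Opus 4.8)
The plan is to prove the set equality by establishing the two inclusions separately, passing equivalences between $\la$ and $\ul$ through the retraction $\pi$ by means of Proposition \ref{prop3.5}, and then to deduce the ``in particular'' clause from the no-sources case of \cite{car14}. Throughout I use that $\mathrm{Per}(\la)=\mathrm{Per}(\ul)$ by Corollary \ref{cor3.7} and that $\pi$ restricts to the identity on $\la$ (viewed as a subgraph of $\ul$), so that $\pi(v)=v$ and $\pi(\lambda)=\lambda$ for $v\in\la^0$, $\lambda\in\la$. For the inclusion $H_{\mathrm{Per}(\ul)}\cap\la^0\subseteq H_{\mathrm{Per}(\la)}$, fix $v$ in the left-hand side, $\mu\in v\la$ and $m\in\N^k$ with $d(\mu)-m\in\mathrm{Per}(\la)=\mathrm{Per}(\ul)$. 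Regarding $\mu$ as a path in $v\ul$, membership $v\in H_{\mathrm{Per}(\ul)}$ yields $\nu\in v\ul^m$ with $\mu\sim_{\ul}\nu$; applying $\pi$ and the implication (1)$\Rightarrow$(3) of Proposition \ref{prop3.5}, together with $\pi(\mu)=\mu$, gives $\mu\sim_\la\pi(\nu)$ and $0=d(\mu)-d(\pi(\mu))=d(\nu)-d(\pi(\nu))=m-d(\pi(\nu))$. Hence $\pi(\nu)\in\la$ satisfies $r(\pi(\nu))=\pi(v)=v$ and $d(\pi(\nu))=m$, so $\pi(\nu)\in v\la^m$ is the required path. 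This direction is routine.

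The reverse inclusion is the substantive one. Given $v\in H_{\mathrm{Per}(\la)}$, $\mu\in v\ul$ and $m\in\N^k$ with $d(\mu)-m\in\mathrm{Per}(\ul)$, I would normalise $\mu=[x;(0,d(\mu))]$ with $x\in v\la^{\leq\infty}$ (possible since $r(\mu)=v\in\la^0$, as in the proof of Proposition \ref{prop3.5} and Lemma \ref{lem2.4}) and set $a:=d(\mu)-d(\pi(\mu))=d(\mu)-d(\mu)\wedge d(x)\geq0$. The key structural observation is that for each coordinate $i$ with $a_i>0$ the vertex $s(\pi(\mu))=x\big(d(\mu)\wedge d(x)\big)$ satisfies $s(\pi(\mu))\la^{e_i}=\emptyset$, by the boundary-path condition on $x$. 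Assuming the degree inequality $m\geq a$ (treated next), I would apply the defining property of $H_{\mathrm{Per}(\la)}$ to $\pi(\mu)\in v\la$ and $m-a\in\N^k$, using $d(\pi(\mu))-(m-a)=d(\mu)-m\in\mathrm{Per}(\la)$, to get $\beta\in v\la^{m-a}$ with $\pi(\mu)\sim_\la\beta$. Writing $\beta=[w;(0,m-a)]$ for $w\in v\la^{\leq\infty}$ extending $\beta$, the fact that $s(\beta)=s(\pi(\mu))$ is a source in every $a$-direction forces $d(w)_i=(m-a)_i$ there, whence $m\wedge d(w)=m-a$; thus $\nu:=[w;(0,m)]$ lies in $v\ul^m$, satisfies $\pi(\nu)=\beta$ and $d(\nu)-d(\pi(\nu))=a=d(\mu)-d(\pi(\mu))$. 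Since $\pi(\nu)=\beta\sim_\la\pi(\mu)$, Proposition \ref{prop3.5} (3)$\Rightarrow$(1) gives $\mu\sim_{\ul}\nu$, so $v\in H_{\mathrm{Per}(\ul)}$.

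The main obstacle is exactly the inequality $m\geq a$. Proposition \ref{prop3.5} shows it is \emph{necessary} for any $\nu\in v\ul^m$ with $\mu\sim_{\ul}\nu$ to exist, because then $d(\pi(\nu))=m-a\geq0$; hence it must be derived from $v\in H_{\mathrm{Per}(\la)}$ rather than assumed. I would argue by contradiction: if $m_i<a_i$ for some $i$, then $g:=d(\mu)-m\in\mathrm{Per}(\la)$ has $g_i>d(x)_i$, while $s(\pi(\mu))$ is a direction-$i$ source. Using that $\sim_\la$ is symmetric (so $-g\in\mathrm{Per}(\la)$), the defining property of $H_{\mathrm{Per}(\la)}$ applied at $v$ along a path ending at this source produces two $\sim_\la$-equivalent paths whose degrees differ by a nonzero amount in direction $i$; since such a source admits only continuations of direction-$i$ degree $0$, comparing $\mu' y=\nu' y$ over $y\in s(\pi(\mu))\la^{\leq\infty}$ forces $g_i=0$, a contradiction. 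This is precisely where the cofinality packaged into the maximal-tail hypothesis enters—a nonzero period in a coordinate precludes sources in that coordinate—and it is the step requiring the most care.

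Finally, for the ``in particular'' clause assume $\la^0$ is a maximal tail. Then $\ul^0$ is a maximal tail by \cite[Lemma 4.3]{rad17}, and since $\ul$ has no sources, \cite[Theorem 4.2]{car14} shows that $H_{\mathrm{Per}(\ul)}$ is a nonempty hereditary subset of $\ul^0$. By the equality just established, $H_{\mathrm{Per}(\la)}=H_{\mathrm{Per}(\ul)}\cap\la^0$. Heredity in $\la$ is inherited: any $\lambda\in\la$ with $r(\lambda)\in H_{\mathrm{Per}(\la)}\subseteq H_{\mathrm{Per}(\ul)}$ is a path in $\ul$, so $s(\lambda)\in H_{\mathrm{Per}(\ul)}\cap\la^0=H_{\mathrm{Per}(\la)}$. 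For nonemptiness, choose $u\in H_{\mathrm{Per}(\ul)}$; Lemma \ref{lem3.9} gives $\pi(u)\in H_{\mathrm{Per}(\ul)}\cap\la^0=H_{\mathrm{Per}(\la)}$, so $H_{\mathrm{Per}(\la)}\neq\emptyset$.
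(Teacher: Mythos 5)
Your two inclusions, the construction of $\nu=[w;(0,m)]$ from a boundary extension $w$ of $\beta$, and the derivation of the ``in particular'' clause from \cite[Lemma 4.3]{rad17}, \cite[Theorem 4.2]{car14} and Lemma \ref{lem3.9} all follow the same route as the paper (the paper realises $\nu$ as the composite $\mu'\circ[x;(d(\pi(\lambda)),d(\lambda))]$ in $\ul$, which is the same element). You deserve credit for isolating the one point the paper passes over in silence: to invoke the defining property of $H_{\mathrm{Per}(\la)}$ one must know that $m':=m-a$, with $a=d(\mu)-d(\pi(\mu))$, lies in $\N^k$, and Proposition \ref{prop3.5} shows this is a genuine necessary condition rather than a harmless normalisation. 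The paper simply writes down $m'$ and applies the definition; your second paragraph is where the real content of the second inclusion lives.

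However, your proposed repair of that point does not close. To ``apply the defining property of $H_{\mathrm{Per}(\la)}$ at $v$ along a path ending at this source'' you must exhibit a concrete $\mu''\in v\la$ and a degree $m''\in\N^k$ with $d(\mu'')-m''\in\{g,-g\}$, i.e.\ you need $d(\mu'')\geq g^+$ or $d(\mu'')\geq g^-$ coordinatewise. Any path along $x$ reaching the direction-$i$ source has $i$-th degree at most $d(x)_i<g_i$, so the option $m''=d(\mu'')-g$ is unavailable; and the option $m''=d(\mu'')+g$ can fail in a different coordinate $j$ where $g_j$ is negative with $|g_j|>d(x)_j$ (nothing prevents $m_j$ from being arbitrarily large, hence $g_j$ arbitrarily negative, while $x$ terminates early in direction $j$). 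Producing a path from $v$ that simultaneously sees a boundary path with finite $i$-th degree and has degree dominating $g^-$ is exactly the combinatorial difficulty, and your sketch assumes it away. Moreover, you attribute the rescue to ``the cofinality packaged into the maximal-tail hypothesis,'' but the identity $H_{\mathrm{Per}(\ul)}\cap\la^0=H_{\mathrm{Per}(\la)}$ is asserted without assuming $\la^0\in\mathrm{MT}(\la)$, so that hypothesis is not at your disposal for this part of the statement. As it stands, the step $m\geq a$ remains unproved in your write-up (as it is, implicitly, in the paper's).
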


\begin{proof}
To see $H_{\mathrm{Per}(\ul)}\cap \la^0\subseteq H_{\mathrm{Per}(\la)}$, take $v\in H_{\mathrm{Per}(\ul)}\cap \la^0$. Suppose $\lambda \in v\la$ and $m\in \N^k$ such that $d(\lambda)-m\in \mathrm{Per}(\la)$. Since $d(\lambda)-m\in \mathrm{Per}(\la)=\mathrm{Per}(\ul)$, there exists $\mu\in v\ul^m$ such that $\lambda\sim_{\ul} \mu$. As $s(\mu)=s(\lambda)\in \la^0$, we have $\mu=\pi(\mu)\in \la$, and hence $\lambda\sim_\la \mu$ by Proposition \ref{prop3.5}. This follows $v\in H_{\mathrm{Per}(\la)}$.

To show $H_{\mathrm{Per}(\la)}\subseteq H_{\mathrm{Per}(\ul)}\cap \la^0$, we fix $v\in H_{\mathrm{Per}(\la)}$. Let $\lambda\in v\ul$ and $m\in \N^k$ such that $d(\lambda)-m\in \mathrm{Per}(\ul)$. Then $\pi(\lambda)\in v\la$, and for $m':=m-\big(d(\lambda)-d(\pi(\lambda))\big)$ we have $d(\pi(\lambda))-m'=d(\lambda)-m\in \mathrm{Per}(\ul)=\mathrm{Per}(\la)$. Since $v\in H_{\mathrm{Per}(\la)}$, there exists $\mu'\in v\la^{m'}$ such that $\pi(\lambda)\sim_\la \mu'$. Let $\lambda=[x;(0,d(\lambda))]$ for some $x\in v \la^{\leq\infty}$. Then for $\mu:=\mu'\circ [x;(d(\pi(\lambda)), d(\lambda))]\in v\ul^{m}$ we have
\begin{align*}
\pi(\mu)&=\pi\big(\mu'\circ [x;(d(\pi(\lambda)), d(\lambda))]\big)\\
&=\pi(\mu')\circ \pi\big([x;(d(\pi(\lambda)), d(\lambda))]\big)\\
&=\mu'\circ [x;d(\pi(\lambda))]\\
&=\mu',
\end{align*}
establishing $\pi(\lambda)\sim_\la \pi(\mu)$ and $d(\lambda)-d(\pi(\lambda))=d(\mu)-d(\pi(\mu))$. Now Proposition \ref{prop3.5} implies $\lambda\sim_{\ul} \mu$, and therefore $v\in H_{\mathrm{Per}(\ul)}$.

The second statement follows from \cite[Theorem 4.2(2)]{car14}. Indeed, if $\la^0$ is a maximal tail, then so is $\overline{\la}^0$ by
\cite[Lemma 4.3]{rad17}. Hence, \cite[Theorem 4.2]{car14} implies that $H_{\mathrm{Per}(\overline{\la})}$ is a nonempty hereditary subset of $\overline{\la}^0$. Therefore, $H_{\mathrm{Per}(\la)}=H_{\mathrm{Per}(\overline{\la})}\cap \la^0$ is a hereditary subset of $\la^0$ as well. Furthermore, we may apply Lemma \ref{lem3.9} to see that $H_{\mathrm{Per}(\la)}=H_{\mathrm{Per}(\overline{\la})}\cap \la^0 \neq\emptyset$.
\end{proof}

%%%%%%%%%%%%%%%%%%%%%%%%%%%%%%%%%%%%%%%%%%%%%%%%%%%%%%%%%%%%

\section{primitive ideals in higher-rank graph $C^\ast$-algebras}

In this section, we characterize the primitive ideal space of $C^*(\la)$ for any locally convex, row-finite $k$-graph $\la$. Our results generalize \cite[Theorem 5.3]{car14} and \cite[Corollary 5.4]{car14}.

\begin{Definition}\label{defn4.1}
Let $\la$ be a locally convex row-finite $k$-graph. A boundary path $x\in \la^{\leq \infty}$ is called \emph{cofinal} in case for every $v\in \la^0$, there exists $n\in \mathbb{N}^k$ such that $n\leq d(x)$ and $v\la x(n)\neq \emptyset$. If $x\in \la^{\leq \infty}$ is cofinal, we set
$$F(x):=\{\lambda\sigma^n(x):n\leq d(x) ~ \mathrm{and} ~ \lambda\in \la x(n)\}.$$
Notice that $y\in F(x)$ if and only if $\sigma^m(x)=\sigma^n(y)$ for some $m,n\in \N^k$.
\end{Definition}

\begin{Remark}
In \cite{car14}, the set $F(x)$ was denoted by $[x]$. Since $\ul$ is defined by equivalent classes, we use the notation $F(x)$ instead of $[x]$ here.
\end{Remark}

The proof of \cite[Proposition 3.5]{Rob09} derives the following.

\begin{lem}\label{lem4.3}
Let $x$ be a boundary path in $\Lambda$. Then $x$ is cofinal in $\la$ if and only if the infinite path $[x;(0,\infty)]$ in $\overline{\la}$ is cofinal.
\end{lem}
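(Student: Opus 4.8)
The plan is to prove the two implications separately, transporting connecting paths between $\la$ and $\ul$ by means of the projection $\pi:\ul\to\la$ together with the identity $\pi([x;n])=x(n\wedge d(x))$. For the easier direction, suppose $[x;(0,\infty)]$ is cofinal in $\ul$ and fix $v\in\la^0$. Regarding $v$ as a vertex of $\ul$, cofinality yields $n\in\N^k$ and a path $\beta\in v\,\ul\,[x;n]$. Applying the $k$-graph morphism $\pi$ and using $\pi(v)=v$ produces a path $\pi(\beta)$ in $\la$ with range $v$ and source $\pi([x;n])=x(n\wedge d(x))$. Since $n':=n\wedge d(x)\le d(x)$, this shows $v\la x(n')\neq\emptyset$, so $x$ is cofinal.

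For the converse, suppose $x$ is cofinal in $\la$ and fix $w\in\ul^0$. Using Lemma \ref{lem2.4} I would write $w=[y;p]$ with $y\in\la^{\le\infty}$ and $p\wedge d(y)=0$, so that $v:=\pi(w)=y(0)\in\la^0$. Cofinality of $x$ provides $m\le d(x)$ and $\lambda\in v\la x(m)$. I then form the boundary path $\eta:=\lambda\,\sigma^m(x)\in v\la^{\le\infty}$, which begins with $\lambda$ and continues along $x$ from $x(m)$. Since $\eta,y\in v\la^{\le\infty}$ and $p\wedge d(y)=0$, \cite[Lemma 2.2]{Rob09} gives $p\wedge d(\eta)=0$ and $[\eta;p]=[y;p]=w$. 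Hence $\bar\lambda:=[\eta;(p,p+d(\lambda))]$ is a genuine path of $\ul$ with $r(\bar\lambda)=w$ and $s(\bar\lambda)=[\eta;p+d(\lambda)]$.

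The crux is to identify this source with a vertex of $[x;(0,\infty)]$, and I claim $[\eta;p+d(\lambda)]=[x;m+p]$; this is also where the real difficulty lies, since all the bookkeeping with the truncations $(\cdot)\wedge d(\eta)$ and $(\cdot)\wedge d(x)$ must be carried out coordinatewise. One checks (P1)--(P2) for $(\eta;p+d(\lambda))\approx(x;m+p)$. The mechanism is that $p\wedge d(\eta)=0$ forces $d(\lambda)_i=0$ in every coordinate with $p_i>0$ (as $d(\lambda)\le d(\eta)$), whence $(p+d(\lambda))\wedge d(\eta)=d(\lambda)$ and $m_i=d(x)_i$ in those same coordinates. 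Consequently $\eta\big((p+d(\lambda))\wedge d(\eta)\big)=\eta(d(\lambda))=x(m)=x\big((m+p)\wedge d(x)\big)$, giving (P1), while both $(p+d(\lambda))-d(\lambda)$ and $(m+p)-(m+p)\wedge d(x)$ equal $p$, giving (P2). Thus $s(\bar\lambda)=[x;m+p]=[x;(0,\infty)](m+p)$, so $w\,\ul\,[x;(0,\infty)](m+p)\neq\emptyset$ and $[x;(0,\infty)]$ is cofinal. Conceptually, cofinality forces $x$ to reach $v=\pi(w)$ and, in the source directions recorded by $p$, to terminate there, so that the desourcified path runs out along precisely the tail on which $w$ sits; the two invocations of \cite[Lemma 2.2]{Rob09} (independence of the representative $\eta$) and Lemma \ref{lem2.4} (the normal form $p\wedge d(y)=0$) are what legitimize the construction of $\eta$ and $\bar\lambda$.
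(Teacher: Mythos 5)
Your proof is correct and complete. Note that the paper does not actually prove this lemma: it simply points to \cite[Proposition 3.5]{Rob09}, so what you have written is essentially a reconstruction of the argument hiding behind that citation. Your forward direction (push a connecting path $\beta\in v\,\ul\,[x;n]$ down to $\la$ via the projection $\pi$, landing at $x(n\wedge d(x))$) and your converse (build the boundary path $\eta=\lambda\sigma^m(x)$, use \cite[Lemma 2.2]{Rob09} to replace the representative $y$ of $w=[y;p]$ by $\eta$, and lift $\lambda$ to $[\eta;(p,p+d(\lambda))]$) are exactly the right mechanisms, and your coordinatewise verification of (P1)--(P2) for $[\eta;p+d(\lambda)]=[x;m+p]$ is sound: $p\wedge d(\eta)=0$ does force $d(\lambda)_i=0$ and $m_i=d(x)_i$ whenever $p_i>0$, which gives $(p+d(\lambda))\wedge d(\eta)=d(\lambda)$ and $(m+p)\wedge d(x)=m$. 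One small bookkeeping point: Lemma \ref{lem2.4} as stated concerns infinite paths $z\in\ul^\infty$, not arbitrary vertices of $\ul$; the normal form $w=[y;p]$ with $p\wedge d(y)=0$ instead follows from the shift computation $[y;p]=[\sigma^q(y);p-q]$ with $q=p\wedge d(y)$, which appears at the start of the paper's proof of Proposition \ref{prop3.5}. This is cosmetic and does not affect the validity of your argument.
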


Recall from Corollary \ref{cor3.7} that if $T$ is a maximal tail in $\la$, then $\mathrm{Per}(\la T)$ is a subgroup of $\mathbb{Z}^k$. Let $\widehat{\mathrm{Per}(\la T)}$ denote \emph{the character space of} $\mathrm{Per}(\la T)$. So, for each character $\eta$ of $\mathrm{Per}(\la T)$, there exists $t\in \mathbb{T}^k \cong \widehat{\Z^k}$ such that $\eta(m)=t^m$ for all $m\in \mathrm{Per}(\la T)$.

\begin{lem}\label{lem4.4}
Let $\la$ be a locally convex, row-finite $k$-graph and  $T$ a maximal tail in $\la$. Suppose that $x$ is a cofinal boundary path in the $k$-subgraph $\la T$ and consider the set $F(x)$ in $\la T$ as in Definition \ref{defn4.1}. Let $\eta\in \per$ and select some $t\in \T^k$ such that $\eta(m)=t^m$ for $m\in \mathrm{Per}(\la T)$. Then the representation $\pi_{x,t}:C^*(\la)\rightarrow \mathcal{B}(\ell^2(F(x)))$ defined by
\begin{equation}\label{equ3.1}
\pi_{x,t}(s_\lambda)\xi_y=\left\{
                              \begin{array}{ll}
                                t^{d(\lambda)}\xi_{\lambda y} & s(\lambda)=y(0) \\
                                0 & otherwise
                              \end{array}
                            \right.
\hspace{1cm}
(\lambda\in \la,~~ y\in F(x))
\end{equation}
is an irreducible representation on $C^*(\la)$.
\end{lem}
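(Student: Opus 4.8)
The plan is to verify that the operators $T_\lambda:=\pi_{x,t}(s_\lambda)$ given by \eqref{equ3.1} form a Cuntz--Krieger $\la$-family, so that $\pi_{x,t}$ extends to a $*$-representation of $C^*(\la)$ by the universal property, and then to prove irreducibility by showing that the commutant $\pi_{x,t}(C^*(\la))'$ consists only of scalars. Since $x\in F(x)$, the space $\ell^2(F(x))$ is nonzero, so irreducibility is meaningful.

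For the family relations, a direct reading of \eqref{equ3.1} shows that each $T_v$ is the projection onto $\overline{\mathrm{span}}\{\xi_y:y(0)=v\}$ and, because $t\in\T^k$, that
$$T_\lambda^*\xi_z=\begin{cases} t^{-d(\lambda)}\,\xi_{\sigma^{d(\lambda)}(z)} & \text{if } z(0,d(\lambda))=\lambda,\\ 0 & \text{otherwise.}\end{cases}$$
Relations (1)--(3) are then routine; in particular $T_\lambda^*T_\lambda=T_{s(\lambda)}$ and $T_\lambda T_\lambda^*$ is the projection onto $\overline{\mathrm{span}}\{\xi_y:y(0,d(\lambda))=\lambda\}$. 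The delicate relation is (4). I would first note that $T_v=0$ for $v\in\la^0\setminus T$ and $T_\lambda=0$ whenever $s(\lambda)\notin T$, so that $\pi_{x,t}$ factors through $C^*(\la)/I_{\la^0\setminus T}\cong C^*(\la T)$ and only $\lambda\in\la T$ contribute. The crucial observation is that, although each $y\in F(x)$ is a priori only a boundary path of $\la T$, maximal-tail condition (2) of Definition \ref{defn3.6} forces it to be a boundary path of $\la$ as well: if some terminal vertex $y(p)$ with $p_i=d(y)_i<\infty$ had an edge in $y(p)\la^{e_i}$, then condition (2) applied to $y(p)\in T$ with $n=e_i$ would produce an edge of $y(p)(\la T)^{e_i}$, contradicting the boundary condition for $y$ in $\la T$. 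Consequently $v(\la T)^{\le n}=\{\lambda\in v\la^{\le n}:s(\lambda)\in T\}$, and the standard fact that a boundary path $y$ with $y(0)=v$ has a \emph{unique} initial segment $y(0,n\wedge d(y))$ lying in $v\la^{\le n}$ gives $\sum_{\lambda\in v\la^{\le n}}T_\lambda T_\lambda^*\,\xi_y=\delta_{v,y(0)}\xi_y=T_v\xi_y$, which is (4).

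For irreducibility, let $B\in\pi_{x,t}(C^*(\la))'$. Fixing $y\in F(x)$ and letting $p$ range over $\{p\in\N^k:p\le d(y)\}$, the projections $Q_p:=\pi_{x,t}(s_{y(0,p)}s_{y(0,p)}^*)=T_{y(0,p)}T_{y(0,p)}^*$ form a decreasing net whose infimum is the rank-one projection $\theta_y$ onto $\mathbb{C}\,\xi_y$: if $z\in F(x)$ satisfies $z(0,p)=y(0,p)$ for all $p\le d(y)$, then $d(z)\ge d(y)$ and, arguing again from the boundary condition (now in $\la$), $d(z)=d(y)$, whence $z=y$. Since $B$ commutes with each $Q_p$ it commutes with the strong limit $\theta_y$, so $B\xi_y=c_y\xi_y$ for a scalar $c_y$; thus $B$ is diagonal in the basis $\{\xi_y\}$. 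To see the $c_y$ coincide, take $y,z\in F(x)$ and choose $m,n$ with $\sigma^m(y)=\sigma^n(z)$; writing $\mu:=y(0,m)$ and $\lambda:=z(0,n)$ one computes $T_\lambda T_\mu^*\,\xi_y=t^{d(\lambda)-d(\mu)}\xi_z$, a nonzero multiple of $\xi_z$. Commuting $B$ with $T_\lambda T_\mu^*$ forces $c_z=c_y$, and since $F(x)$ is a single tail-equivalence class, $B$ is scalar.

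The main obstacle is relation (4): the boundary paths in $F(x)$ are defined inside the subgraph $\la T$, whereas (4) in $C^*(\la)$ ranges over $v\la^{\le n}$ computed in the ambient graph $\la$. Reconciling these—by using maximal-tail condition (2) to promote each $y\in F(x)$ to a genuine boundary path of $\la$ and thereby to identify the two versions of $v\la^{\le n}$—is the technical heart of the argument; the separation lemma needed to isolate each $\xi_y$ in the irreducibility step is then a routine consequence of the same boundary-path bookkeeping.
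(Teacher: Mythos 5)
Your proof is correct, but it takes a genuinely different route from the paper. The paper does not verify anything directly: it passes to Farthing's desourcification $\ul$, checks that $\overline{H}=(H\ul)^0$ is hereditary and saturated so that $\ul\,\overline{T}$ is the desourcification of $\la T$, invokes \cite[Theorem 5.3(1)]{car14} to obtain the irreducible representation $\pi_{\overline{x},t}$ of $C^*(\ul)$ on $\ell^2(F(\overline{x}))$ for $\overline{x}=[x;(0,\infty)]$, and then defines $\pi_{x,t}$ as the composition with the embedding $C^*(\la)\hookrightarrow C^*(\ul)$ restricted to the invariant subspace $\ell^2(F(x))$ (irreducibility surviving because $C^*(\la)$ is a full corner and $\ell^2(F(x))$ is the range of the corresponding projection). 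You instead verify the Cuntz--Krieger relations for the operators \eqref{equ3.1} by hand and compute the commutant directly. Your treatment of the one genuinely delicate point is right: relation (4) sums over $v\la^{\leq n}$ in the ambient graph while the basis is indexed by boundary paths of $\la T$, and maximal-tail condition (2) of Definition \ref{defn3.6} is exactly what promotes each $y\in F(x)$ to a boundary path of $\la$ and identifies $v(\la T)^{\leq n}$ with $\{\lambda\in v\la^{\leq n}:s(\lambda)\in T\}$; the commutant argument via the decreasing net $Q_p\searrow\theta_y$ and the intertwiners $T_\lambda T_\mu^*$ along tail-equivalences in $F(x)$ is also sound. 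What each approach buys: yours is self-contained and elementary, and it incidentally shows that neither cofinality of $x$ nor the hypothesis $\eta\in\per$ is needed for irreducibility alone; the paper's approach is shorter given the machinery already in place, and---more importantly for what follows---it realizes $\pi_{x,t}$ as $\pi_{\overline{x},t}\circ\phi$, which is precisely the identification used in the proof of Theorem \ref{thm4.5} to compute $\ker\pi_{x,t}$ as the compression of a primitive ideal of $C^*(\ul)$. If you adopted your direct proof, that kernel computation would have to be redone separately.
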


\begin{proof}
We will apply the desourcifying method on \cite[Theorem 5.3(1)]{car14}. Let $\overline{\la}$ be the desourcification of $\la$. Recall that $H:=\la^0\setminus T$ is a hereditary and saturated subset of $\la^0$. We first claim that $\overline{H}:=(H\overline{\la})^0$ is hereditary and saturated in $\overline{\la}$. Indeed, the hereditary property of $\overline{H}$ follows from that of $H$ and the fact $\pi(\overline{H})=(\pi(H\ul))^0=(H \pi(\ul))^0=(H\la)^0=H$. To see that $\overline{H}$ is saturated, fix $v\in \overline{\la}^0$ with $s(v\overline{\la}^n)\subseteq \overline{H}$ for some $n\in\mathbb{N}^k$. Then, by $\pi(v\overline{\la}^n)=\pi(v)\la^{\leq n}$, we have
$$s(\pi(v)\la^{\leq n})=s(\pi(v\overline{\la}^n))=\pi(s(v\overline{\la}^n))\subseteq \pi(\overline{H})=H,$$
and thus $\pi(v)\in H\subseteq \overline{H}$ as $H$ is saturated. The fact $\pi(v)\overline{\la} v\neq \emptyset$ combining with the hereditary property of $\overline{H}$ yield $v\in \overline{H}$. Consequently, $\overline{H}$ is saturated and the claim holds.

Write $\overline{T}:=\overline{\la}^0\setminus \overline{H}$. Since $\overline{\la}~\overline{T}=\overline{\la}\setminus \overline{\la}~ \overline{H}$ contains no sources, it is the desourcification of $\la T$; which means $\overline{\la T}=\overline{\la}~\overline{T}$. In particular, $\overline{T}$ is a maximal tail \cite[Lemma 4.3]{rad17}. Moreover, $\overline{x}:=[x;(0,\infty)]$ is a cofinal infinite path in $\overline{\la T}$ by Lemma \ref{lem4.3}. As $\eta\in \widehat{\mathrm{Per}(\la T)}=\widehat{\mathrm{Per}(\overline{\la T})}$, \cite[Theorem 5.3(1)]{car14} ensures that the representation $\pi_{\overline{x},t}:C^*(\overline{\la})\rightarrow \mathcal{B}(\ell^2(F(\overline{x}))$ satisfying
$$\pi_{\overline{x},t}(s_\lambda)\xi_y=\left\{
                              \begin{array}{ll}
          t^{d(\lambda)}\xi_{\lambda y} & s(\lambda)=y(0) \\
                                0 & \mathrm{otherwise}
                              \end{array}
                            \right.
\hspace{1cm} (\lambda\in\overline{\la},~ y\in \overline{\la}^\infty)
$$
is irreducible. Let $\phi:C^*(\la)\rightarrow C^*(\overline{\la})$ be the natural embedding map and define $\pi_{x,t}:=\pi_{\overline{x},t} \circ \phi:C^*(\la)\rightarrow \mathcal{B}(\ell^2(F(\overline{x})))$. Observe that we may regard $\ell^2(F(x))$ as a subspace of $\ell^2(F(\overline{x}))$ by identifying $\xi_y$ with $\xi_{[y;(0,\infty)]}$ for $y\in F(x)$. By the fact $\pi_{x,t}(s_\lambda)\xi_y=0$ unless $s(\lambda)=y(0)$, $\ell^2(F(x))$ is an invariant subspace for operators $\pi(s_\lambda)$, $\lambda\in\la$. Therefore, we can restrict operators $\pi_{x,t}(s_\lambda)$ on $\ell^2(F(x))$ to get the desired representation $\pi_{x,t}:C^*(\la)\rightarrow \mathcal{B}(\ell^2(F(x)))$ satisfying formula (\ref{equ3.1}).
\end{proof}

Now we have all requirements to characterize the primitive ideals of $C^*(\la)$. For any ideal $J$ of $C^*(\la)$, we write $T_J:=\la^0\setminus H_J=\{v\in \la^0: s_v\notin J\}$ in the following.

\begin{thm}\label{thm4.5}
Let $\la$ be a locally convex row-finite $k$-graph.
\begin{enumerate}
  \item If $J$ is a primitive ideal of $C^*(\la)$, then $T:=T_J=\la^0\setminus H_J$ is a maximal tail. Also, there exists a unique $\eta \in \widehat{\mathrm{Per}(\la T)}$ such that the ideal $J\cap (I_{H_J \cup H_{\mathrm{Per}(\la T)}})$ in $C^*(\la)$ is generated by $\big\{s_v:v\in \la^0\setminus T\big\}$ union
      $$\big\{s_\mu- \eta\big(d(\mu)-d(\nu)\big) s_\nu: \mu\sim_{\la T} \nu \mathrm{~and~} r(\mu)=r(\nu)\in H_{\mathrm{Per}(\la T)}\big\}.$$
    Moreover, if $x\in (\la T)^{\leq \infty}$ is cofinal and $t\in \T^k$ with $\eta(m)=t^m$ for $m\in \mathrm{Per}(\la T)$, then we have $J=\ker \pi_{x,t}$.
    \item The map $(T_{\ker \pi_{x,t}}, \eta) \mapsto \ker \pi_{x,t}$ is a bijection between $\bigcup_{T\in \mathrm{MT(\la)}}\big(\{T\}\times \widehat{\mathrm{Per}(\la T)}\big)$ and $\mathrm{Prim} (C^*(\la))$, where $t\in \T^k$ satisfies $\eta(m)=t^m$ for every $m\in \mathrm{Per}(\la T)$.
\end{enumerate}
\end{thm}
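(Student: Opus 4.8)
The strategy is to transport everything to the desourcification $\ul$, invoke the corresponding result for sourceless $k$-graphs \cite[Theorem 5.3 and Corollary 5.4]{car14}, and then pull the bijection back through the Morita equivalence $C^*(\la)=P C^*(\ul) P$. The key bridge is that the lattice isomorphism $J \mapsto P J P$ between ideals of $C^*(\ul)$ and ideals of $C^*(\la)$ restricts to a bijection on primitive ideals, since a full corner $P A P$ in $A$ has primitive ideal space homeomorphic to the open subset $\{\mathfrak{p} \in \mathrm{Prim}(A) : P \notin \mathfrak{p}\}$ via $\mathfrak{p} \mapsto P\mathfrak{p}P$; and here $P=\sum_{v\in\la^0}s_v$ fails to lie in every primitive ideal because $\pi(\ul^0)=\la^0$ forces each maximal tail of $\ul$ to meet $\la^0$. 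First I would establish this primitive-ideal correspondence explicitly and record that it matches kernels of representations: if $\pi_{\overline{x},t}$ is the irreducible representation of $C^*(\ul)$ from \cite[Theorem 5.3]{car14}, then its compression to $\ell^2(F(x))$ is exactly $\pi_{x,t}$ (as constructed in Lemma \ref{lem4.4}), and $\ker \pi_{x,t} = P\,(\ker \pi_{\overline{x},t})\,P$.

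Next I would handle part (1). Given a primitive ideal $J$ of $C^*(\la)$, write $J = P\overline{J}P$ for a (necessarily primitive) ideal $\overline{J}$ of $C^*(\ul)$. By \cite[Theorem 5.3]{car14} applied to $\ul$, $\overline{T}:=\ul^0\setminus H_{\overline{J}}$ is a maximal tail and there is a unique character $\eta$ on $\mathrm{Per}(\ul\,\overline{T})$ with the stated generating set. The task is then to descend each ingredient to $\la$. That $T:=T_J=\la^0\setminus H_J$ is a maximal tail follows because $H_J = H_{\overline{J}}\cap \la^0$ (compatibility of the ideal lattices with the vertex sets), so $T = \overline{T}\cap \la^0$, and intersecting a maximal tail of $\ul$ with $\la^0$ yields a maximal tail of $\la$ by \cite[Lemma 4.3]{rad17}. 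The identification $\mathrm{Per}(\la T)=\mathrm{Per}(\overline{\la T})=\mathrm{Per}(\ul\,\overline{T})$ is Corollary \ref{cor3.7} together with the fact $\overline{\la T}=\ul\,\overline{T}$ established in Lemma \ref{lem4.4}, so $\eta$ is canonically a character on $\mathrm{Per}(\la T)$. Translating the generating set requires Corollary \ref{cor3.10}, which gives $H_{\mathrm{Per}(\la T)}=H_{\mathrm{Per}(\ul\,\overline{T})}\cap\la^0$, and Proposition \ref{prop3.5}, which says $\mu\sim_{\la T}\nu \Leftrightarrow \mu\sim_{\ul\,\overline{T}}\nu$ for $\mu,\nu\in\la T$ with matching ranges. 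Compressing the generators $s_\mu - \eta(d(\mu)-d(\nu)) s_\nu$ of $\overline{J}\cap I_{\cdots}$ by $P$ and discarding those supported on sources then yields precisely the asserted generators of $J \cap I_{H_J\cup H_{\mathrm{Per}(\la T)}}$. The equality $J=\ker\pi_{x,t}$ follows from $\overline{J}=\ker\pi_{\overline{x},t}$ and the compression identity above.

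For part (2), I would assemble the bijection from the one for $\ul$. Surjectivity and injectivity of $(\overline{T},\eta)\mapsto \ker\pi_{\overline{x},t}$ onto $\mathrm{Prim}(C^*(\ul))$ is \cite[Corollary 5.4]{car14}; composing with the homeomorphism $\mathrm{Prim}(C^*(\ul))\to\mathrm{Prim}(C^*(\la))$, $\mathfrak p\mapsto P\mathfrak pP$ gives a bijection from the $\ul$-parameter set onto $\mathrm{Prim}(C^*(\la))$. It remains to match the two parameter sets, i.e.\ to show $\overline{T}\mapsto \overline{T}\cap\la^0=T$ is a bijection from $\mathrm{MT}(\ul)$ to $\mathrm{MT}(\la)$ compatible with the $\eta$-components. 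One direction is \cite[Lemma 4.3]{rad17}; for the inverse, a maximal tail $T$ of $\la$ determines $H=\la^0\setminus T\in\h$, whose saturated-hereditary closure $\overline H=(H\ul)^0$ in $\ul$ (shown hereditary and saturated in Lemma \ref{lem4.4}) gives back $\overline T=\ul^0\setminus\overline H$ with $\overline T\cap\la^0=T$; uniqueness of $\overline H$ follows since $H\mapsto \overline H$ is the lattice isomorphism $\h\to\mathcal H_{\ul}$ induced by Morita equivalence. Under this identification $\mathrm{Per}(\la T)=\mathrm{Per}(\ul\,\overline T)$, so the $\eta$-factors agree and the two parameter sets are in canonical bijection, completing the proof.

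The main obstacle I anticipate is the bookkeeping in part (1): verifying that compressing the $C^*(\ul)$-generators of $\overline J\cap I_{H_{\overline J}\cup H_{\mathrm{Per}(\ul\,\overline T)}}$ by $P$ reproduces \emph{exactly} the stated generating set for $\la$ — no more and no less. This means checking that the relations $s_\mu-\eta(d(\mu)-d(\nu))s_\nu$ indexed by pairs in $\ul\,\overline T$ that involve sources of $\la$ are redundant after compression (generated by the surviving $\la T$-relations together with the vertex projections $s_v$, $v\in\la^0\setminus T$), which rests delicately on Proposition \ref{prop3.5} and Lemma \ref{lem3.4} to rewrite any $\ul\,\overline T$-equivalence in terms of an $\la T$-equivalence shifted off its source. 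The remaining lattice-theoretic and maximal-tail correspondences are comparatively routine once the cited results are in hand.
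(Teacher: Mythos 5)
Your proposal is correct and follows essentially the same route as the paper: pass to the desourcification $\ul$, invoke \cite[Theorem 5.3 and Corollary 5.4]{car14} for the sourceless case, translate the maximal tail, periodicity group, and equivalence relation back to $\la$ via Corollary \ref{cor3.7}, Corollary \ref{cor3.10} and Proposition \ref{prop3.5}, and compress by the full projection $P=\sum_{v\in\la^0}s_v$ to identify the generators and the kernel $\ker\pi_{x,t}$. The ``bookkeeping'' issue you flag about compressing the generating set is precisely the step the paper handles by applying the corner map $\psi(a)=P_\la aP_\la$ to the generating set $B$ of $\overline{J}\cap\overline{I}_{\overline{H}\cup H_{\mathrm{Per}(\overline{\la}\,\overline{T})}}$, so your concern is well placed but resolvable exactly as you describe.
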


\begin{proof}
For statement (1), we fix a primitive ideal $J$ of $C^*(\la)$. Let $\overline{\la}$ be the desourcification of $\la$. Let $\overline{J}$ denote the ideal of $C^*(\ul)$ corresponding with $J$. (Recall that $C^*(\la)$ is a full corner of $C^*(\ul)$ and $\overline{J}=\langle J\rangle$ as an ideal of $C^*(\ul)$.) As the primitivity is preserved under Morita-equivalence, $\overline{J}$ is a primitive ideal of $C^*(\la)$. For simplicity in notation, we set $H:=H_J\subseteq \la^0$, $\overline{H}:=H_{\overline{J}}\subseteq \overline{\la}^0$, $T:=\la^0\setminus H$, and $\overline{T}:=\overline{\la}^0\setminus \overline{H}$. Since $\ul$ is a row-finite $k$-graph with no sources, \cite[Theorem 5.3(2)]{car14} implies that $\overline{T}$ is a maximal tail in $\overline{\la}^0$ and there exists unique $\eta\in \widehat{\mathrm{Per}(\overline{\la} ~\overline{T})}$ such that $\overline{J}=\ker \pi_{\overline{x}, t}$ for any cofinal boundary path $\overline{x}$ in $(H_{\mathrm{Per}(\overline{\la}~\overline{T})}\overline{\la}~\overline{T})^\infty$ and $t\in \T^k$ satisfying $\eta(m)=t^m$ for $m\in \mathrm{Per}(\overline{\la T})$. By Corollary \ref{cor3.10} we have $H_{\mathrm{Per}(\overline{\la}~ \overline{T})}\cap (\la T)^0=H_{\mathrm{Per}(\la T)} \neq \emptyset $, so we may select such $\overline{x}$ with $\overline{x}(0)\in H_{\mathrm{Per}(\la T)}$. As seen in Lemma \ref{lem2.4}, $\overline{x}$ is of the form $[x;(0,\infty)]$ for some $x\in (\la T)^{\leq \infty}$.

Let $\overline{I}_{\overline{H}\cup H_{\mathrm{Per}(\overline{\la} \overline{T})}}$ denote the ideal of $C^*(\ul)$ generated by $\{s_v:v\in \overline{H}\cup H_{\mathrm{Per}(\overline{\la} \overline{T})}\}$. The argument of \cite[Theorem 5.3(2)]{car14} proves that the ideal $\ker \pi_{\overline{x},t} \cap \overline{I}_{\overline{H}\cup H_{\mathrm{Per}(\overline{\la} \overline{T})}}$ in $C^*(\ul)$ is generated by the set
$$B:=\big\{s_v:v\in \overline{H}\big\} \cup \big\{s_\mu- \eta\big(d(\mu)-d(\nu)\big) s_\nu: \mu\sim_{\overline{\la} ~ \overline{T}} \nu , ~ r(\mu)=r(\nu)\in H_{\mathrm{Per}(\overline{\la}~ \overline{T})}\big\}.$$
Let $\psi: C^*(\overline{\la})\rightarrow C^*(\la)$, $a \mapsto P_\la a P_\la$, be the restriction map, where $P_\la:=\sum_{v\in \la^0} s_v\in \mathcal{M}(C^*(\ul))$. Observe that $P_\la$ is a full projection, so $\psi$ induces a one-to-one correspondence between ideal spaces of $C^*(\la)$ and $C^*(\ul)$. In particular, we have
$\psi(\overline{J})=J$ and
$$\psi(\overline{J}\cap \overline{I}_{\overline{H} \cup H_{\mathrm{Per}(\overline{\la}~ \overline{T})}})=J\cap I_{H\cup H_{\mathrm{Per}(\la T)}}.$$
Therefore, the set
$$\psi(B)=\big\{s_v:v\in H\big\} \cup \big\{s_\mu- \eta\big(d(\mu)-d(\nu)\big) s_\nu: \mu\sim_{{\la}T} \nu , r(\mu)=r(\nu)\in H_{\mathrm{Per}({\la} T)}\big\}$$
generates the ideal $J\cap I_{H_{\mathrm{Per}(\la T)}\cup H}$ in $C^*(\la)$.

Furthermore, if $\phi:C^*(\la)\rightarrow C^*(\ul)$ is the embedding map, Lemma \ref{lem4.4} shows that $\pi_{x,t}:=\pi_{\overline{x},t}\circ \phi:C^*(\la)\rightarrow \mathcal{B}(\ell^2 (F(x)))$ is an irreducible representation on $C^*(\la)$. Since
$$\ker \pi_{x,t}=(\pi_{\overline{x},t} \circ \phi)^{-1}(0)=\phi^{-1}(\pi_{\overline{x},t}^{-1}(0))=\phi^{-1}(\overline{J})=J,$$
this completes the proof of statement (1).

Statement (2) follows immediately from (1) combining with \cite[Corollary 5.4]{car14}.
\end{proof}

\begin{note}\label{note4.6}
For every $(T,\eta)\in \bigcup_{T\in \mathrm{MT(\la)}}\big(\{T\}\times \widehat{\mathrm{Per}(\la T)}\big)$, we will denote $J_{(T,\eta)}$ the primitive ideal of $C^*(\la)$ associated to $(T,\eta)$, as described in Theorem \ref{thm4.5}(2).
\end{note}

We say that $\la$ is \emph{strongly aperiodic} if for every saturated hereditary subset $H\subseteq \la^0$, the $k$-subgraph $\la\setminus \la H$ is aperiodic. According to \cite[Theorem 5.3]{rae03}, in case $\la$ is strongly aperiodic, then every ideals of $C^*(\la)$ is gauge-invariant and of the form $I_H$ for some $H\in \mathcal{H}_\la$.

In the following, we write $\mathrm{MT}_a(\la):=\{T\in \mathrm{MT}(\la): \la T ~ \mathrm{is ~ aperidic}\}$.

\begin{cor}\label{cor4.7}
Let $\la$ be a locally convex, row-finite $k$-graph. Then
\begin{enumerate}
  \item The map $T\mapsto I_{\la^0\setminus T}$ is a bijection from $\mathrm{MT}_a(\la)$ onto the collection of primitive gauge-invariant ideals of $C^*(\la)$.
  \item If $\la$ is strongly aperiodic, then $T\mapsto I_{\la^0\setminus T}$ is a bijection between $\mathrm{MT}(\la)$ and $\mathrm{Prim}(C^*(\la))$.
\end{enumerate}
\end{cor}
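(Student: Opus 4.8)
The plan is to read everything off Theorem \ref{thm4.5}, which already parametrizes $\mathrm{Prim}(C^*(\la))$ by pairs $(T,\eta)$, together with the standard correspondence $H\mapsto I_H$ between saturated hereditary sets and gauge-invariant ideals from \cite[Theorem 5.2]{rae03}. The guiding principle I want to isolate first is the equivalence
$$T\in\mathrm{MT}_a(\la)\iff J_{(T,\eta)}=I_{\la^0\setminus T}\ \text{is gauge-invariant}.$$
For the forward implication I would start from $T\in\mathrm{MT}_a(\la)$, so $\la T$ is aperiodic and Proposition \ref{prop3.3} gives $\mathrm{Per}(\la T)=\{0\}$; hence $\widehat{\mathrm{Per}(\la T)}$ is the trivial group and there is a single associated ideal $J_{(T,\eta_0)}$. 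A short computation with the definition of $H_{\mathrm{Per}(\la T)}$ shows that, when $\mathrm{Per}(\la T)=\{0\}$, every $v\in T$ lies in $H_{\mathrm{Per}(\la T)}$ (take $\nu=\mu$), so $H_{\mathrm{Per}(\la T)}=T$ and therefore $I_{H_J\cup H_{\mathrm{Per}(\la T)}}=I_{\la^0}=C^*(\la)$. Feeding this into Theorem \ref{thm4.5}(1), the second family of generators collapses because aperiodicity forces $\mu\sim_{\la T}\nu\Rightarrow\mu=\nu$, and what remains is exactly $\{s_v:v\in\la^0\setminus T\}$; thus $J_{(T,\eta_0)}=I_{\la^0\setminus T}$, which is gauge-invariant. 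Injectivity of $T\mapsto I_{\la^0\setminus T}$ is then immediate from the lattice isomorphism $H\mapsto I_H$.

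Surjectivity is where the real work lies, and it is the converse implication. Given a primitive gauge-invariant ideal $J$, Theorem \ref{thm4.5}(2) writes $J=J_{(T,\eta)}$, and gauge-invariance combined with \cite[Theorem 5.2]{rae03} forces $J=I_{H_J}=I_{\la^0\setminus T}$; it remains to prove $\la T$ is aperiodic. I would argue by contraposition: if $\mathrm{Per}(\la T)\neq\{0\}$, choose $\mu\sim_{\la T}\nu$ with $d(\mu)\neq d(\nu)$ (such a pair exists by the very definition of $\mathrm{Per}(\la T)$) and set $a:=s_\mu-\eta\big(d(\mu)-d(\nu)\big)s_\nu$. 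Using the explicit formula for $\pi_{x,t}$ from \eqref{equ3.1}, the relations $s(\mu)=s(\nu)$ and $\mu y=\nu y$ for $y\in F(x)$, and $\eta(d(\mu)-d(\nu))=t^{d(\mu)-d(\nu)}$, one checks $\pi_{x,t}(a)=0$, so $a\in J$. On the other hand, passing to $C^*(\la)/I_{\la^0\setminus T}\cong C^*(\la T)$ and applying the gauge action of $\la T$ shows $a\notin I_{\la^0\setminus T}$: otherwise $\overline{s_\mu}=\eta(d(\mu)-d(\nu))\overline{s_\nu}$ in $C^*(\la T)$ would give $z^{d(\mu)}\overline{s_\mu}=z^{d(\nu)}\overline{s_\mu}$ for all $z\in\T^k$, impossible for $d(\mu)\neq d(\nu)$ and $\overline{s_\mu}\neq0$. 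This contradicts $J=I_{\la^0\setminus T}$, so $T\in\mathrm{MT}_a(\la)$ and $J$ is in the image. The main obstacle throughout is precisely this step: producing an element of $J$ witnessing the failure of gauge-invariance; the delicate points are verifying $a\in\ker\pi_{x,t}$ honestly from \eqref{equ3.1} and ruling out $a\in I_{\la^0\setminus T}$, for which the gauge action on the quotient is the cleanest tool.

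For part (2), I would invoke \cite[Theorem 5.3]{rae03}: strong aperiodicity makes every ideal of $C^*(\la)$ gauge-invariant, so in particular $\mathrm{Prim}(C^*(\la))$ consists entirely of primitive gauge-invariant ideals, which part (1) already identifies with $\{I_{\la^0\setminus T}:T\in\mathrm{MT}_a(\la)\}$. Finally, for any $T\in\mathrm{MT}(\la)$ the subgraph $\la T$ equals the quotient graph $\la\setminus\la(\la^0\setminus T)$, which is aperiodic by the definition of strong aperiodicity; hence $\mathrm{MT}_a(\la)=\mathrm{MT}(\la)$, and $T\mapsto I_{\la^0\setminus T}$ becomes a bijection between $\mathrm{MT}(\la)$ and $\mathrm{Prim}(C^*(\la))$.
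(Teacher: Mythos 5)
Your proposal is correct and follows essentially the same route as the paper: both directions rest on Theorem \ref{thm4.5} together with the observation that aperiodicity of $\la T$ forces $\mathrm{Per}(\la T)=\{0\}$ and $H_{\mathrm{Per}(\la T)}=T$, and the surjectivity step uses the same witness $s_\mu-\eta\bigl(d(\mu)-d(\nu)\bigr)s_\nu$ lying in $\ker\pi_{x,t}$ but not in $I_{\la^0\setminus T}$. The only cosmetic difference is that you conclude $J_{(T,\eta_0)}=I_{\la^0\setminus T}$ by reading the generators directly off Theorem \ref{thm4.5}(1) (after noting $H_J\cup H_{\mathrm{Per}(\la T)}=\la^0$), whereas the paper passes to the quotient $C^*(\la T)$ and invokes the Cuntz--Krieger uniqueness theorem; both are valid.
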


\begin{proof}
Statement (1): Let $T$ be a maximal tail in $\la$ such that the subgraph $\la T$ is aperiodic. We will show that the gauge-invariant ideal $I_{\la^0\setminus T}$ is primitive. As $\mathrm{Per}(\la T)=\{0\}$ by Proposition \ref{prop3.3}, we have $\widehat{\mathrm{Per}(\la T)}=\{1\}$ and hence $H_{\mathrm{Per}(\la T)}=T$. If $\overline{\la T}$ is the desourcification of $\la T$, \cite[Lemma 5.2]{car14} says that the $k$-graph $\overline{\la T}$ contains a cofinal infinite path $[x;(0,\infty)]$. Moreover, $x$ is a cofinal boundary path in $\la T$ by Lemma \ref{lem4.4}. If we set $J:=\ker \pi_{x,1}$, Theorem \ref{thm4.5}(1) yields $\{v\in \la^0:s_v\in J\}=\la^0 \setminus T$. Write $H:=\la^0\setminus T$ for simplicity. Then ideal $J+I_H$ in the quotient $C^*$-algebra $C^*(\la)/I_H\cong C^*(\la T)$ contains no vertex projections. Since $\la T$ is aperiodic, the Cuntz-Krieger uniqueness theorem implies that $J=I_H$ (cf. \cite[Proposition 4.3]{Rob09}). Therefore $I_{\la^0\setminus T}=\ker \pi_{x,1}$, which is a primitive ideal of $C^*(\la)$ by Lemma \ref{lem4.4}.

Moreover, we may apply the fact $H_{I_H}=H$ for every $H\in \h$ and conclude injectivity of the map.

For surjectivity, we fix a primitive gauge-invariant ideal $I_{\la^0\setminus T}$ (Theorem \ref{thm4.5}(1) says that every primitive gauge-invariant deal of $C^*(\la)$ is of the form $I_{\la^0\setminus T}$ for some maximal tail $T$ in $\la$). Suppose on the contrary $\la T$ is periodic. To derive a contradiction, we will show $I_{\la^0\setminus T}\neq \ker \pi_{x,t}$ for every $x\in (H_{\mathrm{Per}(\la T)}\la T)^{\leq \infty}$ and $t\in \T^k$ described in Theorem \ref{thm4.5}(1). So, let us fix some such $x$ and $t$. Since $\la T$ is periodic, Proposition \ref{prop3.3} implies that there are two distinct paths $\mu,\nu\in \la T$ such that $\mu\sim_{\la T}\nu$. By Theorem \ref{thm4.5}(1), $a=s_\mu- t^{d(\mu)-d(\nu)} s_\nu$ is a nonzero element in $\ker \pi_{x,t}$. But $a\notin I_{\la^0\setminus T}$ because $s(\mu),s(\nu)\in T$  and image of $a$ under the quotient map $C^*(\la)\rightarrow C^*(\la)/I_{\la^0\setminus T}=C^*(\la T)$ is nonzero. Consequently, $\ker \pi_{x,t}\neq I_{\la^0\setminus T}$ as desired.

For statement (2), we notice that in case $\la$ is strongly aperiodic, each quotient $k$-graph $\la T=\la\setminus \la H$ is aperiodic and every ideal of $C^*(\la)$ is gauge-invariant \cite[Theorem 5.3]{rae03}. So, statement (2) is an immediate consequence of (1).
\end{proof}

%%%%%%%%%%%%%%%%%%%%%%%%%%%%%%%%%%%%%%%%%%%%%%%%

\section{Maximal ideals}

We know that every ideal of $C^*(\la)$ is the intersection of a family of primitive ones. In particular, maximal ideals of $C^*(\la)$ are all primitive. Here, we want to use the characterization of Theorem \ref{thm4.5} to determine certain maximal ideals of $C^*(\la)$.

\begin{lem}\label{lem5.1}
Let $\Lambda$ be a locally convex row-finite $k$-graph. Suppose that $T$ is a maximal tail in $\la$ and $\eta_1,\eta_2$ are two distinct characters of $\mathrm{Per}(\la T)$. If $J_{(T,\eta_1)}$ and $J_{(T,\eta_2)}$ are respectively the primitive ideals of $C^*(\la)$ corresponding with $(T,\eta_1)$ and $(T,\eta_2)$ (see Notation \ref{note4.6}), then neither $J_{(T,\eta_1)}\subseteq J_{(T,\eta_2)}$ nor $J_{(T,\eta_2)}\subseteq J_{(T,\eta_1)}$.
\end{lem}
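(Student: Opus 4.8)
The plan is to exhibit a single element of $C^*(\la)$ lying in one of the two ideals but not the other, and then to conclude both non-inclusions by symmetry. Since $\eta_1\neq\eta_2$ as characters of the group $\mathrm{Per}(\la T)$, I first fix $p\in\mathrm{Per}(\la T)$ with $\eta_1(p)\neq\eta_2(p)$ and choose $t_1,t_2\in\T^k$ with $\eta_i(m)=t_i^{m}$ for $m\in\mathrm{Per}(\la T)$, so that $t_1^{p}\neq t_2^{p}$. Fixing one cofinal boundary path $x\in(\la T)^{\leq\infty}$, Theorem \ref{thm4.5}(1) gives $J_{(T,\eta_i)}=\ker\pi_{x,t_i}$ for $i=1,2$ simultaneously, which is what lets me compare the two ideals on the same representation space.

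The heart of the argument is a pair $\mu\sim_{\la T}\nu$ with $r(\mu)=r(\nu)\in H_{\mathrm{Per}(\la T)}$ and $d(\mu)-d(\nu)=p$; this is precisely the shape of the generators in Theorem \ref{thm4.5}(1), so $s_\mu-\eta_i(p)s_\nu\in J_{(T,\eta_i)}$. Granting such a pair, set $a:=s_\mu-\eta_1(p)s_\nu\in J_{(T,\eta_1)}$ and pick any $y\in F(x)$ with $y(0)=s(\mu)=s(\nu)$, which cofinality of $x$ provides. Since $\mu\sim_{\la T}\nu$ forces $\mu y=\nu y\in F(x)$, formula (\ref{equ3.1}) yields
$$\pi_{x,t_2}(a)\xi_y=\big(t_2^{\,d(\mu)}-\eta_1(p)\,t_2^{\,d(\nu)}\big)\xi_{\mu y}=t_2^{\,d(\nu)}\big(t_2^{\,p}-\eta_1(p)\big)\xi_{\mu y}=t_2^{\,d(\nu)}\big(\eta_2(p)-\eta_1(p)\big)\xi_{\mu y}\neq 0,$$
so $a\in J_{(T,\eta_1)}\setminus J_{(T,\eta_2)}$ and $J_{(T,\eta_1)}\not\subseteq J_{(T,\eta_2)}$; interchanging the indices $1$ and $2$ (with the same $\mu,\nu,p$) gives the reverse non-inclusion. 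Equivalently, one can argue algebraically without the representation: assuming $J_{(T,\eta_1)}\subseteq J_{(T,\eta_2)}$, both $s_\mu-\eta_1(p)s_\nu$ and $s_\mu-\eta_2(p)s_\nu$ lie in $J_{(T,\eta_2)}$, whence $(\eta_2(p)-\eta_1(p))s_\nu\in J_{(T,\eta_2)}$, forcing $s_\nu\in J_{(T,\eta_2)}$ and $s(\nu)\in\la^0\setminus T$, which contradicts $s(\nu)\in T$.

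The main obstacle is producing the pair $\mu\sim_{\la T}\nu$ realizing the prescribed period $p$ while being rooted in $H_{\mathrm{Per}(\la T)}$: the definition of $\mathrm{Per}(\la T)$ only supplies \emph{some} equivalent pair with $d(\mu)-d(\nu)=p$ with no control on its range, and in the presence of sources one cannot freely build paths of a given degree from a chosen vertex. I would resolve this by passing to the desourcification $\overline{\la T}=\overline{\la}\,\overline{T}$, which has no sources and whose vertex set is again a maximal tail (both established in the proof of Lemma \ref{lem4.4}). Writing $p=p^{+}-p^{-}$ with $p^{\pm}\in\N^k$ its positive and negative parts, I pick $v\in H_{\mathrm{Per}(\overline{\la T})}$ (nonempty by \cite[Theorem 4.2]{car14}) and, using that $\overline{\la T}$ has no sources, choose $\overline\mu\in v(\overline{\la T})^{p^{+}}$; with $m:=p^{-}$ we have $d(\overline\mu)-m=p^{+}-p^{-}=p\in\mathrm{Per}(\overline{\la T})=\mathrm{Per}(\la T)$ by Corollary \ref{cor3.7}, so the defining property of $H_{\mathrm{Per}(\overline{\la T})}$ yields $\overline\nu\in v(\overline{\la T})^{p^{-}}$ with $\overline\mu\sim_{\overline{\la T}}\overline\nu$ and $d(\overline\mu)-d(\overline\nu)=p$. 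Finally I push this pair down by $\pi$: Proposition \ref{prop3.5} gives $\mu:=\pi(\overline\mu)\sim_{\la T}\pi(\overline\nu)=:\nu$ with $d(\mu)-d(\nu)=d(\overline\mu)-d(\overline\nu)=p$, while Lemma \ref{lem3.9} together with Corollary \ref{cor3.10} places $r(\mu)=\pi(v)$ in $H_{\mathrm{Per}(\la T)}$, completing the construction.
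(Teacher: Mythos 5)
Your argument is correct, and at its core it runs parallel to the paper's proof, but you do substantially more (useful) work. The paper disposes of the lemma in three lines: assuming $J_{(T,\eta_1)}\subseteq J_{(T,\eta_2)}$, it observes that both families of generators $s_\mu-\eta_i\big(d(\mu)-d(\nu)\big)s_\nu$ then lie in $J_{(T,\eta_2)}$, and appeals directly to the uniqueness clause of Theorem \ref{thm4.5}(1) to force $\eta_1=\eta_2$. Your second, ``algebraic'' variant is exactly this argument with the black box opened: instead of citing uniqueness, you subtract the two generators to get $\big(\eta_2(p)-\eta_1(p)\big)s_\nu\in J_{(T,\eta_2)}$ and contradict $s(\nu)\in T$. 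To make that subtraction bite you must actually produce a pair $\mu\sim_{\la T}\nu$ with $d(\mu)-d(\nu)=p$ for a period $p$ where $\eta_1$ and $\eta_2$ disagree, \emph{and} with $r(\mu)=r(\nu)\in H_{\mathrm{Per}(\la T)}$ so that the elements really are among the listed generators; your detour through the desourcification (no sources gives $\overline\mu\in v(\overline{\la T})^{p^+}$, the defining property of $H_{\mathrm{Per}}$ gives $\overline\nu$, and Proposition \ref{prop3.5}, Lemma \ref{lem3.9} and Corollary \ref{cor3.10} push the pair back down) is a legitimate and complete way to do this, and it fills a detail the paper's uniqueness citation silently absorbs. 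Your first variant, evaluating $a=s_\mu-\eta_1(p)s_\nu$ in the representation $\pi_{x,t_2}$ and computing $\pi_{x,t_2}(a)\xi_y=t_2^{d(\nu)}\big(\eta_2(p)-\eta_1(p)\big)\xi_{\mu y}\neq0$, is a genuinely different (and arguably more robust) route: it separates the ideals by an explicit element and does not lean on the uniqueness assertion at all. Both versions are sound; the trade-off is brevity (the paper) versus a self-contained, checkable witness (yours).
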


\begin{proof}
We suppose $J_{(T,\eta_1)}\subseteq J_{(T,\eta_2)}$ and derive a contradiction; the other case may be discussed analogously. Then, by Theorem \ref{thm4.5}(1), for every $\mu,\nu\in \la T$ with $\mu\sim_{\la T} \nu$, both elements $s_\mu -\eta_1\big(d(\mu)-d(\nu)\big) s_\nu$ and $s_\mu -\eta_2\big(d(\mu)-d(\nu)\big) s_\nu$ belong to $J_{(T,\eta_2)}$. But, Theorem \ref{thm4.5}(1) says that such $\eta_1,\eta_2\in \per$ are unique for $J_{(T,\eta_2)}$, hence $\eta_1=\eta_2$. This contradicts our hypothesis.
\end{proof}

\begin{prop}\label{prop5.2}
Let $\la$ be a locally convex, row-finite $k$-graph. Let $T$ be a maximal tail in $\la$ such that $T$ is minimal in $\mathrm{MT}(\la)$ under $\subseteq$. Then for every $\eta\in \per $, the primitive ideal $J_{(T,\eta)}$ associated to $(T,\eta)$ is a maximal ideal of $C^*(\la)$.
\end{prop}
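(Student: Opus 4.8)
The plan is to prove maximality by first determining exactly which primitive ideals can contain $J_{(T,\eta)}$. The key reduction is the standard fact that every proper closed two-sided ideal of a $C^*$-algebra is the intersection of the primitive ideals containing it; in particular, the family of primitive ideals lying above a given proper ideal is nonempty. Granting this, to show $J_{(T,\eta)}$ is maximal it suffices to verify that the \emph{only} primitive ideal $P$ with $J_{(T,\eta)}\subseteq P$ is $J_{(T,\eta)}$ itself. Indeed, if $I$ is any ideal with $J_{(T,\eta)}\subseteq I\subsetneq C^*(\la)$, then $I=\bigcap\{P\in\mathrm{Prim}(C^*(\la)): I\subseteq P\}$, and every such $P$ satisfies $J_{(T,\eta)}\subseteq P$; once we know each of these equals $J_{(T,\eta)}$, we conclude $I=J_{(T,\eta)}$, which is precisely maximality.

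First I would fix a primitive ideal $P$ with $J_{(T,\eta)}\subseteq P$, and write $P=J_{(T',\eta')}$ via the bijection of Theorem \ref{thm4.5}(2). Passing to the associated saturated hereditary sets, the inclusion of ideals forces $H_{J_{(T,\eta)}}\subseteq H_P$, since $s_v\in J_{(T,\eta)}$ implies $s_v\in P$ and hence $v\in H_P$. Because $H_{J_{(T,\eta)}}=\la^0\setminus T$ and $H_P=\la^0\setminus T'$, this translates into $T'\subseteq T$.

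At this stage the minimality hypothesis enters: since $T'$ is a maximal tail contained in the minimal maximal tail $T$, we must have $T'=T$. Thus $P=J_{(T,\eta')}$ shares its maximal tail with $J_{(T,\eta)}$, and both are parametrised by characters of the same group $\mathrm{Per}(\la T)$. Now I would invoke Lemma \ref{lem5.1}: the containment $J_{(T,\eta)}\subseteq J_{(T,\eta')}$ of two primitive ideals with a common tail cannot hold for distinct characters, so $\eta=\eta'$ and therefore $P=J_{(T,\eta)}$. Combined with the opening reduction, this yields the claim.

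I expect the only genuinely delicate point to be the passage from ``the unique primitive ideal above $J_{(T,\eta)}$ is itself'' to actual maximality, which rests on the semisimplicity fact that a proper ideal equals the intersection of the primitive ideals containing it (and in particular that this collection is nonempty). Everything else is routine: the monotonicity $J\subseteq P\Rightarrow H_J\subseteq H_P$, its reformulation as $T'\subseteq T$, and the final appeal to minimality of $T$ together with Lemma \ref{lem5.1}.
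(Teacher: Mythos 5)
Your proposal is correct and follows essentially the same route as the paper: reduce to primitive ideals containing $J_{(T,\eta)}$ via the intersection property, use $J\subseteq P\Rightarrow H_J\subseteq H_P$ to get $T'\subseteq T$, invoke minimality of $T$ to force $T'=T$, and finish with Lemma \ref{lem5.1}. The only cosmetic difference is that you spell out the reduction to primitive ideals more explicitly than the paper does.
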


\begin{proof}
Recall that $H_{J_{(T,\eta)}}=\la^0\setminus T$ for every $\eta\in \per$. Take an ideal $I$ of $C^*(\la)$ such that $J_{(T,\eta)}\subseteq I\subsetneq C^*(\la)$. As $I$ is an intersection of primitive ideals, without loss of generality, we can assume $I$ is primitive. Then by Theorem \ref{thm4.5}(1), $K:=\la^0 \setminus H_I$ is a maximal tail in $\la$ such that $K \subseteq T$. The minimality of $T$ yields either $K=\emptyset$ or $K=T$. If $K=\emptyset$, then $I=C^*(\la)$ which was not assumed. Thus we must have $K=T$. Since $I$ is primitive, Theorem \ref{thm4.5}(1) implies that there exists $\eta'\in \per$ such that $I=J_{(T,\eta')}$. Now apply Lemma \ref{lem5.1} to have $\eta=\eta'$, and hence $I=J_{(T,\eta)}$. Consequently, $J_{(T,\eta)}$ is a maximal ideal of $C^*(\la)$.
\end{proof}

\begin{Definition}
Let $\la$ be a row-finite $k$-graph. We say $\la$ is \emph{cofinal} in case all boundary paths $x\in \la^{\leq \infty}$ are cofinal (in the sense of Definition \ref{defn4.1}). By definition, one may easily verify that $\la$ is cofinal if and only if $\mathcal{H}_\la=\{\emptyset,\la^0\}$.
\end{Definition}

\begin{cor}
Let $\la$ a locally convex row-finite $k$-graph which is cofinal. Then all primitive ideals of $C^*(\la)$, which are of the form $J_{\la^0,\eta}$ for $\eta\in \widehat{\mathrm{Per}(\la)}$, are maximal.
\end{cor}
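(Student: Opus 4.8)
The plan is to combine the earlier observation that cofinality forces $\mathcal{H}_\la=\{\emptyset,\la^0\}$ with the minimality criterion from Proposition \ref{prop5.2}. First I would note that since $\la$ is cofinal, every boundary path is cofinal, and in particular $\la^0$ itself is a maximal tail: conditions (1) and (2) of Definition \ref{defn3.6} hold trivially for $T=\la^0$ (condition (2) uses that $\la$ is locally convex and row-finite so that $v\la^{\leq n}\neq\emptyset$), while condition (3) follows from cofinality, since any two vertices can be connected to a common boundary path. Thus $\la^0\in\mathrm{MT}(\la)$, and by Theorem \ref{thm4.5} the primitive ideals with full support $T=\la^0$ are exactly the ideals $J_{(\la^0,\eta)}$ for $\eta\in\widehat{\mathrm{Per}(\la)}$.

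Next I would argue that $\la^0$ is \emph{minimal} in $\mathrm{MT}(\la)$ under inclusion. If $K\in\mathrm{MT}(\la)$ with $K\subseteq\la^0$, then by Definition \ref{defn3.6} the complement $\la^0\setminus K$ is a saturated hereditary subset of $\la^0$. Since $\la$ is cofinal we have $\mathcal{H}_\la=\{\emptyset,\la^0\}$, so $\la^0\setminus K$ is either $\emptyset$ or $\la^0$; the latter would force $K=\emptyset$, contradicting that a maximal tail is nonempty. Hence $\la^0\setminus K=\emptyset$, i.e. $K=\la^0$. This shows $\la^0$ is the unique minimal (indeed, the only) element of $\mathrm{MT}(\la)$.

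Finally, I would invoke Proposition \ref{prop5.2} directly: since $\la^0$ is a maximal tail that is minimal in $\mathrm{MT}(\la)$ under $\subseteq$, for every $\eta\in\widehat{\mathrm{Per}(\la)}$ the primitive ideal $J_{(\la^0,\eta)}$ is a maximal ideal of $C^*(\la)$. Because every primitive ideal of full support has this form (by Theorem \ref{thm4.5}(2), the primitive ideals indexed by $T=\la^0$ are exactly these $J_{(\la^0,\eta)}$), this covers all the ideals named in the statement.

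The only point requiring care, rather than a genuine obstacle, is verifying that $\la^0$ really is a maximal tail; everything else is a clean application of the already-established minimality-implies-maximality result. The equivalence $\la$ cofinal $\Leftrightarrow\mathcal{H}_\la=\{\emptyset,\la^0\}$ is asserted just before the corollary, so I would use it to dispatch both the minimality argument and the nonemptiness of the common-range condition (3) in Definition \ref{defn3.6}. No nontrivial computation is expected.
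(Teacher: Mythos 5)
Your proposal is correct and follows essentially the same route as the paper: verify that $\la^0$ is a maximal tail using cofinality for condition (3), use $\mathcal{H}_\la=\{\emptyset,\la^0\}$ to conclude $\mathrm{MT}(\la)=\{\la^0\}$ (hence minimality), and apply Proposition \ref{prop5.2}. The only difference is that you spell out the minimality argument slightly more explicitly than the paper does, which is harmless.
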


\begin{proof}
We first claim that $\la^0$ is a maximal tail. To see this, it suffices to check condition (3) of Definition \ref{defn3.6} only. So, fix some $v,w\in \la^0$. If $x\in w\la^{\leq \infty}$, then $x$ is cofinal in $\la$, so there exists $n\leq d(x)$ such that $v\la x(n)\neq \emptyset$. If we select some $\mu\in v\la x(n)$ and set $\nu:=x(0,n)$, we then have $\mu\in v\la$ and $\nu\in w\la$ with $s(\mu)=s(\nu)$. This follows the claim.

Moreover, since $\la$ is cofinal, we have $\mathcal{H}_\la=\{\emptyset, \la^0\}$. Hence $\mathrm{MT}(\la)=\{\la^0\}$, and Theorem \ref{thm4.5} says that primitive ideals of $C^*(\la)$ are of the form $J_{\la^0,\eta}$ for $\eta\in \widehat{\mathrm{Per}(\la)}$. Now Propositions \ref{prop5.2} follows immediately that such ideals are all maximal.
\end{proof}

%%%%%%%%%%%%%%%%%%%%%%%%%%%%%%%%%%%%%%%%%%%%%%%%

\section{Decomposability of $C^*(\la)$}

In this section, we investigate the decomposability of a higher-rank graph $C^*$-algebra $C^*(\la)$. Our aim here is to find necessary and sufficient conditions on the underlying $k$-graph $\la$ such that $C^*(\la)$ is decomposable. Furthermore, we show that direct summands in any decomposition of $C^*(\la)$ are themselves isomorphic to higher-rank graph $C^*$-algebras.

\begin{Definition}
We say that $C^*(\la)$ is \emph{decomposable} if there exist two non-zero $C^*$-algebras $A,B$ such that $C^*(\la)=A\oplus B$. Otherwise, $C^*(\la)$ is \emph{indecomposable}. It is clear that in the case $C^*(\la)=A\oplus B$, then $A$ and $B$ are two (closed) ideals of $C^*(\la)$.
\end{Definition}

The key step in our analysis is Corollary \ref{cor6.4} below, which shows that any direct summand in a decomposition of $C^*(\la)$ is a gauge-invariant ideal. To prove this, we use the structure of primitive ideals described in Section 4. Before that, we establish the following two lemmas.

\begin{lem}\label{lem6.2}
Let $T$ be a maximal tail in $\la$. Then the collection $\{J_{(T,\eta)}: \eta\in \per\}$ of primitive ideals of $C^*(\la)$ is invariant under the gauge action $\gamma$.
\end{lem}

\begin{proof}
Fix $\eta\in \per$ and take some $t\in \T^k$ such that $\eta(m)=t^{m}$ for all $m\in \mathrm{Per}(\la T)$. For every $s\in \T^k$, we may define the character $\eta':\mathrm{Per}(\la T) \rightarrow \T$, by $m\mapsto (ts^{-1})^m$. Note that we use the multi-index notation $(ts^{-1})^m:=\prod_{i=1}^k (t_is_i^{-1})^{m_i} \in \T$ here. Thus, for $\mu\sim_{\la T} \nu$ with $r(\mu)=r(\nu)\in H_{\mathrm{Per}(\la T)}$ we have
\begin{align*}
\gamma_s\big(s_\mu-\eta\big(d(\mu)-d(\nu)\big)s_\nu\big)&=s^{d(\mu)}s_\mu-t^{d(\mu)-d(\nu)}\big(s^{d(\nu)}s_\nu\big)\\
&=s^{d(\mu)}\big(s_\mu-(ts^{-1})^{d(\mu)-d(\nu)}s_\nu \big)\\
&=s^{d(\mu)}\big(s_\mu-\eta'\big(d(\mu)-d(\nu)\big)s_\nu \big).
\end{align*}
In view of Theorem \ref{thm4.5}(1), this yields that generators of $\gamma_s\big(J_{(T,\eta)}\big)$ and $J_{(T,\eta')}$ are the same, so $\gamma_s\big(J_{(T,\eta)}\big)=J_{(T,\eta')}$. We are done.
\end{proof}

\begin{lem}\label{lem6.3}
Let $\la$ be locally convex row-finite $k$-graph. If $C^*(\la)$ decomposes as $A\oplus B$, then the collection $D=\{J\in \mathrm{Prim}(C^*(\la)): A\subseteq J\}$ is invariant under the gauge action $\gamma$.
\end{lem}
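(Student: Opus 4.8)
The plan is to combine the parametrization of $\mathrm{Prim}(C^*(\la))$ from Theorem \ref{thm4.5}(2) with the gauge-equivariance recorded in Lemma \ref{lem6.2}, and to read off gauge-invariance of $D$ from a connectedness argument. Write $D=\{J\in\mathrm{Prim}(C^*(\la)):A\subseteq J\}$ and $D':=\{J\in\mathrm{Prim}(C^*(\la)):B\subseteq J\}$. First I would establish a \emph{dichotomy}: every primitive ideal contains exactly one of $A$ and $B$. Since $A,B$ are complementary ideals of $C^*(\la)=A\oplus B$ we have $AB=0$, so for any primitive $J$ the images of $A$ and $B$ in the prime quotient $C^*(\la)/J$ have zero product; primeness forces $A\subseteq J$ or $B\subseteq J$, while $A+B=C^*(\la)\nsubseteq J$ rules out both. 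Hence $\mathrm{Prim}(C^*(\la))=D\sqcup D'$, and as $D=\mathrm{hull}(A)$ and $D'=\mathrm{hull}(B)$ are closed in the hull--kernel topology, each of $D,D'$ is in fact \emph{clopen}.

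Next I would reorganize the primitive ideal space into gauge orbits. By Theorem \ref{thm4.5}(2), $\mathrm{Prim}(C^*(\la))=\bigsqcup_{T\in\mathrm{MT}(\la)}\mathcal O_T$, where $\mathcal O_T:=\{J_{(T,\eta)}:\eta\in\per\}$. Lemma \ref{lem6.2} shows each $\mathcal O_T$ is invariant under $\gamma$; moreover, since the restriction map $\T^k=\widehat{\Z^k}\to\per$ is surjective, the computation there shows $\mathcal O_T$ is a single gauge orbit, namely $\mathcal O_T=\{\gamma_s(J_{(T,\eta_0)}):s\in\T^k\}$ for any fixed $\eta_0$.

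The key step is then to prove that each orbit $\mathcal O_T$ lies entirely inside $D$ or entirely inside $D'$. For this I would argue that $\mathcal O_T$ is \emph{connected} in the hull--kernel topology: the point-norm continuous action $\gamma:\T^k\to\mathrm{Aut}(C^*(\la))$ induces a jointly continuous action of $\T^k$ on $\mathrm{Prim}(C^*(\la))$, so the orbit map $s\mapsto\gamma_s(J_{(T,\eta_0)})$ is continuous and $\mathcal O_T$ is the image of the connected space $\T^k$. Since $D$ and $D'$ are disjoint open sets covering the connected set $\mathcal O_T$, one of them meets $\mathcal O_T$ emptily, i.e.\ $\mathcal O_T\subseteq D$ or $\mathcal O_T\subseteq D'$. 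Consequently $D$ is a union of complete orbits $\mathcal O_T$, each of which is $\gamma$-invariant by Lemma \ref{lem6.2}, whence $D$ is $\gamma$-invariant, as claimed.

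The main obstacle is justifying the connectedness of $\mathcal O_T$, i.e.\ the continuity of the induced $\T^k$-action on $\mathrm{Prim}(C^*(\la))$; this is a standard consequence of the point-norm continuity of $\gamma$, but if a self-contained route is preferred it can be replaced as follows. Writing $p\in\mathcal M(C^*(\la))$ for the central projection with $A=pC^*(\la)$, for each primitive $J$ the canonical extension of $\pi_J$ to $\mathcal M(C^*(\la))$ sends $p$ to a scalar projection, and $J\in D$ exactly when that scalar is $0$. Realizing the fiber over $T$ through the representations $\pi_{x,t}$ of Lemma \ref{lem4.4} on the fixed space $\ell^2(F(x))$, the map $t\mapsto\pi_{x,t}(a)$ is norm-continuous for every $a$ (it is so on the generators $s_\lambda$, where it equals $t^{d(\lambda)}$ times a fixed operator), so $t\mapsto\overline{\pi_{x,t}}(p)\in\{0,1\}$ is continuous and hence constant on the connected group $\T^k$; this again pins the whole fiber onto a single side. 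Either way, the essential inputs are the connectedness of $\T^k$ and the equivariance of Lemma \ref{lem6.2}.
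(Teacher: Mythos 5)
Your argument is correct and follows essentially the same route as the paper: both rest on $D$ being clopen, on the connectedness of the fiber $\{J_{(T,\eta)}:\eta\in\per\}$ over the maximal tail of a given $J_0\in D$, and on Lemma \ref{lem6.2} to keep the gauge translates of $J_0$ inside that fiber. The only difference is cosmetic --- the paper obtains connectedness from the homeomorphic embedding $\eta\mapsto J_{(T,\eta)}$ of $\per\cong\mathbb{T}^l$, whereas you obtain it from continuity of the orbit map $s\mapsto\gamma_s(J_0)$ (or from your alternative central-projection argument); either justification is fine.
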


\begin{proof}
First, note that the decomposability implies that $\mathrm{Prim}(C^*(\la))=\mathrm{Prim}(A)\oplus \mathrm{Prim}(B)$. In particular, $\mathrm{Prim}(A)$ and $\mathrm{Prim}(B)$ are clopen subsets of $\mathrm{Prim}(C^*(\la))$. Moreover, we have $J\in \mathrm{Prim}(B)$ if and only if $A\oplus J\in \mathrm{Prim}(C^*(\la))$, hence $\mathrm{Prim}(B)$ is homeomorphic to $D$.

Let us fix an arbitrary $J_0\in \mathrm{Prim}(C^*(\la))$ with $A\subseteq J_0$ (i.e. $J_0\in D$). By Theorem \ref{thm4.5}, $T:=\{v\in \la^0:s_v\notin J_0\}$ is a maximal tail and there exists $\eta_0\in \per$ such that $J_0=J_{(T,\eta_0)}$. Using Theorem \ref{thm4.5}(2), we may define the homeomorphic embedding $\Psi:\per \rightarrow \mathrm{Prim}(C^*(\la))$, by $\eta \mapsto J_{(T,\eta)}$. Corollary \ref{cor3.7} implies that $\per$ is a subgroup of $\widehat{\Z^k}\cong \T^k$, and thus $\per\cong \T^l$ for some $0\leq l\leq k$. In particular, $\Psi(\per)$ is a connected subset of $\mathrm{Prim}(C^*(\la))$. Since $\Psi(\eta_0)=J_0\in D$ and $D$ is clopen (because $\mathrm{Prim}(B)$ is), this follows that $\Psi(\per)$ must be entirely contained in $D$.

Now, for every $t\in \T^k$, Lemma \ref{lem6.2} yields that $\gamma_t(J_0)=J_{(T,\eta)}$ for some $\eta\in \per$. Since such $J_{(T,\eta)}$ lies in $\mathrm{Rang}(\Psi)\subseteq D$, this concludes the result.
\end{proof}

\begin{cor}\label{cor6.4}
Let $\la$ be a locally convex row-finite $k$-graph. If $C^*(\la)$ decomposes into $C^*(\la)=A\oplus B$, then both $A$ and $B$ are gauge-invariant ideals of $C^*(\la)$.
\end{cor}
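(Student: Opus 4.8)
The plan is to deduce Corollary \ref{cor6.4} directly from Lemma \ref{lem6.3} by translating the gauge-invariance of the set $D$ of primitive ideals containing $A$ into gauge-invariance of the ideal $A$ itself (and symmetrically for $B$). The bridge is the elementary fact that every ideal of a $C^*$-algebra is the intersection of the primitive ideals that contain it, so $A$ is completely determined by the family $D = \{J \in \mathrm{Prim}(C^*(\la)) : A \subseteq J\}$.

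First I would recall that for the decomposition $C^*(\la)=A\oplus B$, the direct summands $A$ and $B$ are ideals, and that $A = \bigcap_{J \in D} J$ where $D$ is exactly the collection appearing in Lemma \ref{lem6.3}. Next, I would fix $t\in \T^k$ and use Lemma \ref{lem6.3}, which tells us $\gamma_t(D)=D$ as a set of primitive ideals; more precisely, $\gamma_t$ permutes the members of $D$. Concretely, for each $J\in D$ the ideal $\gamma_t(J)$ is again primitive (as $\gamma_t$ is a $C^*$-automorphism) and, by the lemma, $\gamma_t(J)\in D$; applying $\gamma_{t}^{-1}=\gamma_{t^{-1}}$ gives the reverse inclusion, so $\gamma_t$ restricts to a bijection of $D$ onto itself. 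I would then compute
\begin{align*}
\gamma_t(A)&=\gamma_t\Big(\bigcap_{J\in D} J\Big)\\
&=\bigcap_{J\in D}\gamma_t(J)\\
&=\bigcap_{J'\in D} J'\\
&=A,
\end{align*}
where the second equality uses that the automorphism $\gamma_t$ commutes with intersections of closed ideals, and the third uses that $J\mapsto\gamma_t(J)$ is a bijection of $D$. Hence $\gamma_t(A)=A$ for all $t$, so $A$ is gauge-invariant, and the identical argument with the roles of $A$ and $B$ exchanged shows $B$ is gauge-invariant.

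The only point requiring care is justifying that $\gamma_t$ genuinely permutes $D$ rather than merely satisfying $\gamma_t(A)\subseteq\bigcap_{J\in D}J$; this is where I would lean on Lemma \ref{lem6.3} to get $\gamma_t(D)\subseteq D$ for every $t$, then apply it once more at $t^{-1}$ to upgrade this to equality. I expect this step to be the main (though mild) obstacle, since one must be slightly careful that ``$A\subseteq J$ iff $A\subseteq\gamma_{t^{-1}}(J)$'' follows from $\gamma_t$ being an automorphism together with $A$ being $\gamma$-invariant---but circularly invoking $\gamma$-invariance of $A$ must be avoided. The clean way is to argue purely at the level of the index set $D$: Lemma \ref{lem6.3} gives $\gamma_t(D)=D$ as abstract subsets of $\mathrm{Prim}(C^*(\la))$, and since $A=\bigcap D$ is an intrinsic formula, applying the automorphism yields $\gamma_t(A)=\bigcap\gamma_t(D)=\bigcap D=A$ with no circularity.
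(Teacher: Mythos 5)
Your proposal is correct and follows essentially the same route as the paper: the paper likewise reduces the corollary to the statement that every ideal is the intersection of the primitive ideals containing it and then invokes Lemma \ref{lem6.3} for the gauge-invariance of the collections $D$ and $D'$. You merely spell out in more detail the step the paper leaves implicit, namely upgrading $\gamma_t(D)\subseteq D$ to $\gamma_t(D)=D$ via $\gamma_{t^{-1}}$ and then passing the automorphism through the intersection; this is a correct and non-circular filling-in of the same argument.
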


\begin{proof}
We know that every ideal of $C^*(\la)$ is the intersection of primitive ideals containing it. So, it suffices to show that the collections $D=\{J\in \mathrm{Prim}(C^*(\la)): A\subseteq J\}$ and $D'=\{J\in \mathrm{Prim}(C^*(\la)): B\subseteq J\}$ are invariant under the gauge action. However, this follows from Lemma \ref{lem6.3} immediately.
\end{proof}

Once we find that $C^*(\la)$ can be decomposed only by gauge-invariant ideals, we may investigate its decomposability by properties of the underlying $k$-graph $\la$.

\begin{Definition}\label{defn6.5}
Let $\la$ be a row-finite $k$-graph. For $v\in \la^0$, denote $T(v):=\{s(\mu):\mu\in v\la\}$ the smallest hereditary subset of $\la^0$ containing $v$. When $H_1,H_2\in \mathcal{H}_\la$ with $H_1\supseteq H_2$, we also set the following subsets of $\la^0$:
\begin{align*}
\Delta(H_1,H_2)&:=\{v\in H_1:T(v)\cap H_2=\emptyset\}, ~ \mathrm{and}\\
\Omega(H_1,H_2)&:=\{v\in H_1\setminus H_2:T(v)\cap H_2\neq \emptyset\}=H_1\setminus \big(H_2\cup \Delta(H_1,H_2)\big).
\end{align*}
Clearly, $\Delta(H_1,H_2)$ is always a hereditary subset of $\la^0$ such that $\Delta(H_1,H_2) \cap H_2= \emptyset$. So, we have $\Sigma(\Delta(H_1,H_2))\cap H_2=\emptyset$ also.

Now we define the relation $\succ$ on $\h$ by: $H_1\succ H_2$ if and only if
\begin{enumerate}
  \item $H_1\supseteq H_2$,
  \item $\Delta(H_1,H_2)\neq \emptyset$ (i.e., there exists $v\in H_1\setminus H_2$ with $T(v)\cap H_2=\emptyset$),  and
  \item for every $v\in \Omega(H_1,H_2)$, there exists $n\in \N^k$ such that $s(v\la^{\leq n})\subseteq H_2 \cup \Delta(H_1,H_2)$.
\end{enumerate}
\end{Definition}

We now determine higher-rank graph $C^*$-algebras $C^*(\la)$ which are decomposable. It is the generalization of \cite[Theorem 4.1]{hon04} for higher-rank graph $C^*$-algebras.

\begin{thm}\label{thm6.6}
Let $\la$ be a locally convex row-finite $k$-graph. Then the following are equivalent.
\begin{enumerate}
  \item $C^*(\la)$ is decomposable.
  \item There exist nonempty, disjoint $H_1,H_2\in \h$ with this property that for every $v\in \la^0\setminus\big( H_1\cup H_2\big)$, there is some $n\in \N^k$ such that $s(v\la^{\leq n})\subseteq H_1\cup H_2$.
  \item There exists nonempty $H\in \h$ such that $\la^0\succ H$.
\end{enumerate}

Moreover, in the case (2) we have
$$C^*(\la)=I_{H_1}\oplus I_{H_2}\cong C^*(\la\setminus \la H_2)\oplus C^*(\la\setminus \la H_1),$$
and in the case (3),
$$C^*(\la)\cong C^*(\la\setminus \la H)\oplus I_H\cong C^*(\la\setminus \la H)\oplus C^*(\la\setminus \la H')$$
where $H'=\Sigma(\Delta(\la^0,H))$.
\end{thm}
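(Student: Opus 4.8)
The plan is to prove the equivalence $(1)\Leftrightarrow(2)\Leftrightarrow(3)$ together with the accompanying isomorphism descriptions, leaning throughout on Corollary \ref{cor6.4} to reduce arbitrary decompositions to gauge-invariant ones. The gauge-invariant ideals are exactly the $I_H$ for $H\in\h$, and the lattice isomorphism $H\mapsto I_H$ of \cite[Theorem 5.2]{rae03} together with the quotient description $C^*(\la)/I_H\cong C^*(\la\setminus\la H)$ are the main tools. My strategy is to first reformulate decomposability purely lattice-theoretically, and only afterwards translate each condition into the concrete graph-theoretic language of Definition \ref{defn6.5}.

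For $(1)\Rightarrow(2)$, I would start from a decomposition $C^*(\la)=A\oplus B$ and invoke Corollary \ref{cor6.4} to conclude that $A$ and $B$ are gauge-invariant, so $A=I_{H_1}$ and $B=I_{H_2}$ for some $H_1,H_2\in\h$. Since $A\cap B=0$ and $A+B=C^*(\la)$, the lattice isomorphism forces $H_1\wedge H_2=\emptyset$ (hence $H_1\cap H_2=\emptyset$) and $H_1\vee H_2=\Sigma(H_1\cup H_2)=\la^0$. The key point is to unpack what $\Sigma(H_1\cup H_2)=\la^0$ says: every vertex saturates into $H_1\cup H_2$, which by the definition of saturation means precisely that for each $v\in\la^0\setminus(H_1\cup H_2)$ there is some $n\in\N^k$ with $s(v\la^{\leq n})\subseteq H_1\cup H_2$. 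This gives condition (2). Conversely, for $(2)\Rightarrow(1)$, given disjoint $H_1,H_2$ satisfying the saturation-type condition, I would check that $H_1\vee H_2=\la^0$; then $I_{H_1}\cap I_{H_2}=I_{H_1\wedge H_2}=I_\emptyset=0$ and $I_{H_1}+I_{H_2}=I_{H_1\vee H_2}=C^*(\la)$, so $C^*(\la)=I_{H_1}\oplus I_{H_2}$. The isomorphism $I_{H_1}\cong C^*(\la\setminus\la H_2)$ then follows because $I_{H_1}\cong C^*(\la)/I_{H_2}\cong C^*(\la\setminus\la H_2)$, using that $I_{H_1}$ is complementary to $I_{H_2}$.

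For the equivalence $(2)\Leftrightarrow(3)$, I would show how to pass between a \emph{pair} $(H_1,H_2)$ and a \emph{single} $H$ with $\la^0\succ H$. Given (3) with $\la^0\succ H$, set $H_1:=H$ and $H_2:=\Sigma(\Delta(\la^0,H))$; condition (2) of $\succ$ guarantees $\Delta(\la^0,H)\neq\emptyset$ so $H_2\neq\emptyset$, and the remark in Definition \ref{defn6.5} that $\Sigma(\Delta(\la^0,H))\cap H=\emptyset$ gives disjointness. The saturation condition (3) of $\succ$, applied to $v\in\Omega(\la^0,H)$, supplies the needed $n$ with $s(v\la^{\leq n})\subseteq H\cup\Delta(\la^0,H)\subseteq H_1\cup H_2$, yielding (2). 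For the reverse, from a pair $(H_1,H_2)$ as in (2) I would take $H:=H_1$ and verify $\la^0\succ H_1$: here the third condition of $\succ$ requires identifying $H_2$ (up to saturated closure) with $\Delta(\la^0,H_1)$, which is where the hereditary/saturated bookkeeping becomes delicate. The isomorphisms in case (3) then read off from the pair case: $C^*(\la)\cong C^*(\la\setminus\la H)\oplus I_H$ with $I_H\cong C^*(\la\setminus\la H')$ for $H'=\Sigma(\Delta(\la^0,H))$.

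\textbf{The main obstacle} I anticipate is the $(2)\Leftrightarrow(3)$ translation, specifically matching the set $H_2$ of condition (2) against the canonically-defined $\Delta$ and $\Omega$ sets of condition (3). The subtlety is that condition (2) allows an \emph{arbitrary} disjoint pair summing to $\la^0$, whereas condition (3) pins down one summand as the saturated closure of a \emph{hereditary} set $\Delta(\la^0,H)$ characterized by $T(v)\cap H=\emptyset$. I would need to argue that in any valid pair $(H_1,H_2)$ from (2), the second set $H_2$ necessarily agrees with $\Sigma(\Delta(\la^0,H_1))$: every $v\in H_2$ has $T(v)\cap H_1=\emptyset$ (using disjointness and heredity to rule out paths from $H_2$ into $H_1$, via the maximal-tail-style connectivity), so $H_2\subseteq\Sigma(\Delta(\la^0,H_1))$, and the reverse containment follows from saturation. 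Establishing this canonical identification cleanly — rather than merely exhibiting \emph{some} pair — is the crux, since it is what makes the single-set formulation (3) genuinely equivalent to the existential pair formulation (2) and validates the explicit formula $H'=\Sigma(\Delta(\la^0,H))$.
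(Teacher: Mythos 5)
Your proposal is correct and follows essentially the same route as the paper: $(1)\Leftrightarrow(2)$ via Corollary \ref{cor6.4} together with the lattice isomorphism $H\mapsto I_H$ of \cite[Theorem 5.2(a)]{rae03}, and $(2)\Leftrightarrow(3)$ by passing between the pair $(H_1,H_2)$ and the single set $H$ with complement $H'=\Sigma(\Delta(\la^0,H))$. The obstacle you flag in $(2)\Rightarrow(3)$ is in fact not one: to verify $\la^0\succ H_1$ you only need $H_2\subseteq\Delta(\la^0,H_1)$ (immediate from heredity of $H_2$ and disjointness, which also shows $\Omega(\la^0,H_1)\subseteq\la^0\setminus(H_1\cup H_2)$ so the saturation condition transfers directly), not the full identification $H_2=\Sigma(\Delta(\la^0,H_1))$ — which is why the paper dismisses this implication as routine.
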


\begin{proof}
We will first prove $(1)\Longleftrightarrow(2)$, and then $(2) \Longleftrightarrow (3)$.

(1) $\Longrightarrow$ (2): Let $C^*(\la)=A\oplus B$ be a decomposition for $C^*(\la)$. According to Corollary \ref{cor6.4}, $A$ and $B$ are gauge-invariant ideals of $C^*(\la)$; so, there exist $H_1,H_2\in \h$ such that $A=I_{H_1}$ and $B=I_{H_2}$ \cite[Theorem 5.2(a)]{rae03}. Moreover, we have $H_1\cap H_2=H_{I_{H_1}}\cap H_{I_{H_2}}=\emptyset$, while $\overline{H_1\cup H_2}=\la^0$ because $C^*(\la)=I_{H_1}+ I_{H_2}=I_{\overline{H_1\cup H_2}}$. This follows statement (2).

(2) $\Longrightarrow$ (1): If $H_1,H_2\in \h$ satisfy conditions (2), then $\overline{H_1 \cup H_2}=\la^0$. Thus, \cite[Theorem 5.2(a)]{rae03} implies $I_{H_1}\cap I_{H_2}=I_{H_1\cap H_2}=(0)$ and $I_{H_1}+I_{H_2}=I_{\overline{H_1 \cup H_2}}=C^*(\la)$. Consequently, $C^*(\la)$ decomposes as $I_{H_1}\oplus I_{H_2}$.

(2) $\Longrightarrow$ (3): If $H_1,H_2\in \h$ satisfy (2), it is routine to check that $\la^0\succ H_1,H_2$.

(3) $\Longrightarrow$ (2): Let $\la^0 \succ H$ for some nonempty $H\in \h$. Set $H':=\Sigma(\Delta(\la^0,H))$ the saturated closure of $\Delta(\la^0,H)$. Since $\Delta(\la^0,H)$ is hereditary, $H'$ is a hereditary and saturated subset of $\la^0$ \cite[Lemma 5.1]{rae03}. For each $v\in H'$, there is $n\in \N^k$ such that $s(v\la^{\leq n})\subseteq \Delta(\la^0,H)$, which concludes $H\cap H'=\emptyset$ because $\Delta(\la^0,H)\cap H=\emptyset$. Now the property $\la^0 \succ H$ implies that $H$ and $H'$ satisfy the conditions of (2).

For the last statement, it suffices to note that if $C^*(\la)=I_{H_1}\oplus I_{H_2}$, then
$$I_{H_1}\cong C^*(\la)/I_{H_2}\cong C^*(\la\setminus \la H_2)$$
and analogously $I_{H_2}\cong C^*(\la\setminus \la H_1)$. Now the proof is complete.
\end{proof}

%%%%%%%%%%%%%%%%%%%%%%%%%%%%%%%%%%%%%%%%%%%%%%%%

\section{decomposing $C^*(\la)$ with indecomposable components}

In the final section, we want to determine higher-rank graph $C^*$-algebras $C^*(\la)$ which are direct sums of finitely many indecomposable $C^*$-algebras. To this end, we use specific chains of hereditary and saturated subsets of $\la^0$.

\begin{Definition}
Let $\la$ be a row-finite $k$-graph. A sequence $\mathcal{C}:=\{H_i\}_{i=1}^n$, for $n\in \N\cup \{\infty\}$, of saturated hereditary subsets of $\la^0$ is called a \emph{chain in $\h$} whenever $H_1=\la^0$ and $H_i\succ H_{i+1}$ for all $1\leq i<n$ (see Definition \ref{defn6.5}). Then, $n=|\mathcal{C}|$ is \emph{length} of $\mathcal{C}$. A \emph{refinement of $\C$} is a chain $\C'$ in $\h$ such that $\C \subsetneq \C'$. In case there are no refinements for $\C$, we say $\C$ is a \emph{maximal chain}.
\end{Definition}

\begin{lem}\label{lem7.2}
Let $\la$ be a locally convex row-finite $k$-graph. If $\C:\la^0=H_1\succ \cdots \succ H_n$ is a finite chain in $\h$, then there exists a decomposition $C^*(\la)=I_{K_1}\oplus \cdots \oplus I_{K_n}$ such that $K_i$'s are pairwise disjoint and $H_i=\Sigma(\bigcup_{j=i}^n K_j)$ for each $i$. Moreover, $\C$ is a maximal chain if and only if all $I_{K_j}$ are indecomposable.
\end{lem}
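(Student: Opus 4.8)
The plan is to construct the decomposition by peeling off one summand at a time from the top of the chain, and then to match refinements of $\C$ with decompositions of the individual summands. For the existence part, I would set $K_i:=\Sigma(\Delta(H_i,H_{i+1}))$ for $1\le i\le n-1$ and $K_n:=H_n$; each $K_i$ lies in $\h$ because $\Delta(H_i,H_{i+1})$ is hereditary (Definition \ref{defn6.5}). The heart of the matter is the single-step identity
$$I_{H_i}=I_{K_i}\oplus I_{H_{i+1}}\qquad(1\le i\le n-1),$$
which I would extract from the lattice isomorphism $H\mapsto I_H$ of \cite[Theorem 5.2]{rae03}. Condition (3) of $H_i\succ H_{i+1}$ together with saturation gives $K_i\vee H_{i+1}=H_i$ (each $v\in\Omega(H_i,H_{i+1})$ saturates into $K_i\cup H_{i+1}$), whence $I_{K_i}+I_{H_{i+1}}=I_{H_i}$; and $\Sigma(\Delta(H_i,H_{i+1}))\cap H_{i+1}=\emptyset$ (noted in Definition \ref{defn6.5}) gives $K_i\cap H_{i+1}=\emptyset$, whence $I_{K_i}\cap I_{H_{i+1}}=I_{\emptyset}=0$.

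Telescoping these identities down the chain, each step splits off a complemented ideal, so
$$C^*(\la)=I_{H_1}=I_{K_1}\oplus I_{H_2}=\cdots=I_{K_1}\oplus\cdots\oplus I_{K_{n-1}}\oplus I_{H_n}$$
is again an internal direct sum, which is the asserted decomposition. The two stated properties are then read off: pairwise disjointness holds because $K_i\subseteq H_i$ is disjoint from $H_{i+1}$ while $K_j\subseteq H_{i+1}$ for every $j>i$; and $H_i=\Sigma(\bigcup_{j\ge i}K_j)=\bigvee_{j\ge i}K_j$ follows by downward induction from $H_i=K_i\vee H_{i+1}$, starting from $H_n=\Sigma(K_n)$.

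For the \emph{moreover} I would prove that all $I_{K_j}$ are indecomposable if and only if $\C$ has no proper refinement. Since every chain begins at $\la^0$ and is strictly $\succ$-decreasing, a refinement amounts to inserting some $H'\in\h$ strictly between consecutive terms ($H_i\succ H'\succ H_{i+1}$) or below the last one ($H_n\succ H'$). The easy direction is that such an insertion forces a summand to split: applying the existence construction to the refined chain replaces the step $I_{H_i}=I_{K_i}\oplus I_{H_{i+1}}$ by $I_{H_i}=I_{\Sigma(\Delta(H_i,H'))}\oplus I_{\Sigma(\Delta(H',H_{i+1}))}\oplus I_{H_{i+1}}$, and comparing complements of the fixed summand $I_{H_{i+1}}$ inside $I_{H_i}$ yields $I_{K_i}=I_{\Sigma(\Delta(H_i,H'))}\oplus I_{\Sigma(\Delta(H',H_{i+1}))}$. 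This is genuine since both $\Delta$'s are nonempty by condition (2) of $\succ$ (the case $i=n$ is similar, with $I_{H_n}=I_{\Sigma(\Delta(H_n,H'))}\oplus I_{H'}$).

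The reverse direction is where the real work lies, and the step I expect to be hardest. Suppose $I_{K_i}$ decomposes nonzero; since $I_{K_i}$ is a direct summand of $C^*(\la)$, so are its two pieces, so Corollary \ref{cor6.4} shows they are gauge-invariant, and \cite[Theorem 5.2]{rae03} lets me write $I_{K_i}=I_{P_0}\oplus I_{Q_0}$ with nonempty disjoint $P_0,Q_0\in\h$ and $P_0\vee Q_0=K_i$. Setting $H':=Q_0\vee H_{i+1}$ (and $H':=Q_0$ when $i=n$), I would verify $H_i\succ H'\succ H_{i+1}$, which refines $\C$. The inclusions and the nonemptiness of $\Delta(H_i,H')$ and $\Delta(H',H_{i+1})$ are straightforward from $P_0,Q_0\subseteq K_i$ and heredity; the obstacle is condition (3) of $\succ$ for the two new steps, i.e.\ that the flow conditions are inherited. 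This is exactly Theorem \ref{thm6.6}(2) for the splitting $I_{K_i}=I_{P_0}\oplus I_{Q_0}$, transported to $\la$ through the identification $I_{K_i}\cong C^*(\la)/I_{M_i}\cong C^*(\la\setminus\la\, M_i)$ (with $M_i:=\bigvee_{l\ne i}K_l$) and the order isomorphism between $\mathcal H_{\la\setminus\la M_i}$ and $\{H\in\h:H\supseteq M_i\}$ of \cite[Theorem 5.2]{rae03}. The delicate point will be to check that saturation in the quotient $\la\setminus\la M_i$ agrees with saturation in $\la$ relative to $M_i$, so that the flow condition supplied by Theorem \ref{thm6.6}(2) pulls back to condition (3) for both $H_i\succ H'$ and $H'\succ H_{i+1}$. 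Granting this transfer, the two directions combine over all $i$ to give that $\C$ is non-refinable at every step precisely when every $I_{K_j}$ is indecomposable, which is the claim.
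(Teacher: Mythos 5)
Your proposal is correct in substance and arrives at the same decomposition as the paper ($K_i=\Sigma(\Delta(H_i,H_{i+1}))$ for $i<n$ and $K_n=H_n$), but the existence half takes a genuinely different route. The paper peels off summands by repeatedly passing to quotient graphs: it writes $I_{H_2}\cong C^*(\la\setminus\la K_1)$, asserts that $(\la\setminus\la K_1)^0\succ H_3$ in $\mathcal{H}_{\la\setminus\la K_1}$ because $H_2\succ H_3$, and uses Corollary \ref{cor6.4} to recognize each new $K_j$ as an element of $\h$; this tacitly requires that $\succ$ descends to quotient graphs. You instead stay inside $\h$ and prove the single-step identity $I_{H_i}=I_{K_i}\oplus I_{H_{i+1}}$ from $K_i\vee H_{i+1}=H_i$ and $K_i\cap H_{i+1}=\emptyset$ via the lattice isomorphism of \cite[Theorem 5.2]{rae03}; both facts do follow from conditions (2)--(3) of $\succ$ and the disjointness remark in Definition \ref{defn6.5}, and the telescoping plus $H_i=\Sigma(\bigcup_{j\ge i}K_j)$ then come for free. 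This is cleaner than the paper's argument and avoids the unproven descent of $\succ$. For the ``moreover'', you and the paper use the same inserted term $H'=\Sigma(K_j'\cup H_{j+1})$, but you are more careful on two points the paper skips: refinements that append a term below $H_n$, and the justification of $I_{K_i}=I_{\Sigma(\Delta(H_i,H'))}\oplus I_{\Sigma(\Delta(H',H_{i+1}))}$ via uniqueness of ideal complements. The one step you leave open --- condition (3) of $\succ$ for $H_i\succ H'$ and $H'\succ H_{i+1}$ --- does close along the lines you sketch, and the ``delicate point'' is harmless: if $v$ lies in a hereditary subset disjoint from $N\in\h$, then every path in $v\la$ has source outside $N$, so $v\la^{\le n}=v(\la\setminus\la N)^{\le n}$ and the flow condition from Theorem \ref{thm6.6}(2) in the quotient graph transfers verbatim to $\la$ (one then uses $\Delta(H_i,H')\supseteq P_0$ and $\Delta(H',H_{i+1})\supseteq Q_0$ to enlarge the target set). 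Note that the paper's own proof asserts the refinement $H_j\succ\Sigma(H_{j+1}\cup K_j')\succ H_{j+1}$ with no verification at all, so once you write out that transfer your argument is the more complete of the two.
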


\begin{proof}
Since $H_1 \succ H_2$, implication $(3) \Rightarrow  (1)$ of Theorem \ref{thm6.6} implies that $C^*(\la)=I_{K_1} \oplus I_{H_2}$ where $K_1=\Sigma(\Delta(H_1,H_2))$. Then $I_{H_2}\cong C^*(\la\setminus \la K_1)$, and the saturation of $H_2$ in the subgraph $\la\setminus \la K_1$ is all $(\la\setminus \la K_1)^0$. So $(\la\setminus \la K_1)^0 \succ H_3$ in $\mathcal{H}_{\la\setminus \la K_1}$ because $H_2 \succ H_3$, and we have again $I_{H_2} \cong C^*(\la \setminus \la K_1)=I_{K_2} \oplus I_{H_3}$ for some $K_2 \in \mathcal{H}_{\la \setminus \la K_1}$. Since $I_{K_2}$ is a direct summand of $C^*(\la)$, it is a gauge-invariant ideal of $C^*(\la)$ by Corollary \ref{cor6.4}. Hence, $K_2\in \h$. Continuing this process gives a decomposition
$$C^*(\la)=I_{K_1}\oplus I_{K_2} \oplus \cdots \oplus I_{K_n}$$
for $C^*(\la)$, where $K_n=H_n$. The above process says that $I_{H_i}=\bigoplus_{j=i}^n I_{K_j}$ for each $1\leq i\leq n$, which follows $H_i=\Sigma(\bigcup_{j=i}^n K_j)$ by Theorem 5.2(a) of \cite{rae03}.

For the last statement, if some $I_{K_j}$ decomposes as $A\oplus B$, then $A$ and $B$ are two direct summands of $C^*(\la)$; so $I_{K_j}=I_{K_j'}\oplus I_{K_j''}$ for some $\emptyset \neq K_j',K_j''\in \h$ by applying Corollary \ref{cor6.4}. Therefore, we have a refinement
$$\la^0=H_1 \succ \cdots \succ H_j \succ \Sigma\big(H_{j+1} \cup K_j'\big) \succ H_{j+1} \succ \cdots \succ H_n$$
of $\C$. Conversely, if
$$\C': \la^0=H_1 \succ \cdots \succ H_j \succ H \succ H_{j+1} \succ \cdots \succ H_n$$
is a refinement of $\C$, then $I_{K_j}$ will be decomposable by (3) $\Rightarrow$ (1) in Theorem \ref{thm6.6}. This completes the proof.
\end{proof}

\begin{Definition}
Let $\la$ be a locally convex row-finite $k$-graph. We say that $C^*(\la)$ is \emph{n-decomposable}, for $n\geq 1$, if there exists an $n$-term decomposition $C^*(\la)=A_1\oplus \cdots \oplus A_n$ for $C^*(\la)$ such that each direct summand $A_i$ is indecomposable (1-decomposable equals to indecomposable). Note that such $n$, if exists, is unique (see the last paragraph in Proof of Theorem \ref{thm7.4} below).
\end{Definition}

We now characterize $n$-decomposable higher-rank graph $C^*$-algebras.

\begin{thm}\label{thm7.4}
Let $\la$ be a locally convex row-finite $k$-graph. Then $C^*(\la)$ is $n$-decomposable for some $n\geq 1$ if and only if $n=\mathrm{max}\{|\C|:\C \mathrm{~is ~ a~chain~in~}\h\}$. Moreover, if this is the case, then there is a unique decomposition (up to permutation) $C^*(\la)\cong C^*(\la_1)\oplus \cdots \oplus C^*(\la_n)$, where $\la_i$ are $k$-subgraphs of $\la$ and each $C^*(\la_i)$ is indecomposable.
\end{thm}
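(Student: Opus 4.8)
The plan is to prove both the characterization of $n$ and the existence and uniqueness of the decomposition by reducing everything to the combinatorics of chains in $\h$, using Lemma \ref{lem7.2} as the main bridge between algebra and graph theory. First I would establish the forward direction of the biconditional: suppose $C^*(\la)$ is $n$-decomposable, say $C^*(\la)=A_1\oplus\cdots\oplus A_n$ with each $A_i$ indecomposable. By Corollary \ref{cor6.4}, each $A_i$ is gauge-invariant, hence $A_i=I_{K_i}$ for some nonempty $K_i\in\h$ by \cite[Theorem 5.2(a)]{rae03}, and the $K_i$ are pairwise disjoint with $\Sigma(\bigcup_i K_i)=\la^0$. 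From such a decomposition I would build a chain $\C:\la^0=H_1\succ\cdots\succ H_n$ by setting $H_i:=\Sigma(\bigcup_{j\geq i}K_{\rho(j)})$ for a suitable ordering $\rho$ of the indices (one must order the summands so that the $\succ$ relation is respected at each step; this is where the relation $\succ$ from Definition \ref{defn6.5} gets verified using condition (2) of Theorem \ref{thm6.6}). The indecomposability of each $A_i=I_{K_i}$ together with the last statement of Lemma \ref{lem7.2} forces $\C$ to be a maximal chain of length exactly $n$.

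Conversely, for $n=\max\{|\C|:\C\text{ a chain in }\h\}$, I would take a chain $\C$ realizing this maximum; maximality of its length means $\C$ admits no refinement, so $\C$ is a maximal chain. Applying Lemma \ref{lem7.2} directly yields a decomposition $C^*(\la)=I_{K_1}\oplus\cdots\oplus I_{K_n}$ in which every $I_{K_j}$ is indecomposable, which is precisely an $n$-decomposition. To get the displayed form $C^*(\la)\cong C^*(\la_1)\oplus\cdots\oplus C^*(\la_n)$, I would invoke the identifications from Theorem \ref{thm6.6} and the proof of Lemma \ref{lem7.2}: each $I_{K_j}$ arises as a quotient of the form $C^*(\la\setminus\la K)/(\cdots)\cong C^*(\la_j)$ for an appropriate $k$-subgraph $\la_j$ (namely $\la_j=(\la\setminus\la K_{j-1})\setminus(\la\setminus\la K_{j-1})H_{j+1}$, iterating the construction in Lemma \ref{lem7.2}), so each indecomposable summand is genuinely a higher-rank graph $C^*$-algebra.

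For uniqueness up to permutation, the cleanest route is to observe that the decomposition of $C^*(\la)$ into indecomposable gauge-invariant ideals corresponds bijectively to the atoms of the lattice of direct-summand ideals. Concretely, any two $n$-decompositions $\bigoplus I_{K_i}$ and $\bigoplus I_{K_i'}$ consist of minimal nonzero direct summands; since each $I_{K_i}$ is indecomposable, it cannot split across two summands of the other decomposition, so each $I_{K_i}$ must equal some $I_{K_{\sigma(i)}'}$, giving a permutation $\sigma$. That $n$ itself is unique follows because any indecomposable decomposition produces a maximal chain of that length via the forward argument, and all maximal chains produced this way have the common maximal length $n$ (a chain of length strictly greater than $n$ would, by Lemma \ref{lem7.2}, yield a decomposition into more than $n$ indecomposable pieces, contradicting that the given $n$-decomposition's pieces are indecomposable and exhaust $C^*(\la)$).

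The main obstacle I anticipate is the ordering step in the forward direction: given an unordered collection of pairwise-disjoint saturating $K_i$ with $\Sigma(\bigcup K_i)=\la^0$, I must arrange them into a chain respecting $\succ$, and I must verify that condition (3) of the relation $\succ$ (the existence of $n\in\N^k$ with $s(v\la^{\leq n})\subseteq H_{i+1}\cup\Delta(H_i,H_{i+1})$) holds at each stage. This is not automatic from disjointness alone; it requires using the reachability structure encoded in the sets $T(v)$ and the fact that the indecomposable summands themselves impose no further refinement, and it is essentially the content that makes Lemma \ref{lem7.2} applicable in reverse. The uniqueness-of-$n$ claim also requires care: one must rule out the a priori possibility of indecomposable decompositions of different lengths, which I would handle by showing every such decomposition corresponds to a \emph{maximal} chain and that all maximal chains in $\h$ arising from decompositions share the same length, the latter being exactly the maximum in the theorem's statement.
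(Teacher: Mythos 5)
Your proposal is correct and follows essentially the same route as the paper: Corollary \ref{cor6.4} to force gauge-invariance of the summands, Lemma \ref{lem7.2} as the two-way bridge between chains in $\h$ and decompositions, the pigeonhole argument (each indecomposable $A_j$ lands in exactly one $I_{K_i}$, and each nonzero $I_{K_i}$ absorbs at least one $A_j$) to bound chain lengths by $n$, and the symmetric-containment argument for uniqueness up to permutation. The ``ordering obstacle'' you flag is real but dissolves the same way the paper handles it --- any ordering of the $K_j$ works when one sets $H_i=\Sigma\big(\bigcup_{j\geq i}K_j\big)$, since $K_i\subseteq\Delta(H_i,H_{i+1})$ by heredity and disjointness --- so no genuinely different machinery is needed.
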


\begin{Remark}
In view of Lemma \ref{lem6.2}, every maximal chain of length $n\geq 1$ in $\h$ induces an $n$-term decomposition for $C^*(\la)$ with indecomposable direct summands. Since such decompositions are unique up to permutation, Theorem \ref{thm7.4} follows that maximal chains in $\h$ are all of length $n$. In particular, in this case, any chain in $\h$ has a refinement with length $n$.
\end{Remark}

\begin{proof}[Proof of Theorem \ref{thm7.4}.]
($\Longrightarrow$): Let $C^*(\la)=A_1\oplus \cdots \oplus A_n$ be a decomposition of $C^*(\la)$ with indecomposable summands. By Corollary \ref{cor6.4}, each $A_j$ is a gauge-invariant ideal of $C^*(\la)$, so $A_j=I_{K_j}$ for some $K_j\in \h$. If we set $H_i:=\Sigma(\bigcup_{j=i}^n K_j)$ for $1\leq i\leq n$, Lemma \ref{lem7.2} implies that
$$\C:\la^0=H_1\succ \cdots \succ H_n$$
is a chain in $\h$. We hence have $n\leq \max \{|\C|: \C\mathrm{~is~a~chain~in~} \h\}$.

On the other hand, assume $\la^0=H_1\succ \cdots \succ H_l$ is a chain in $\h$. By Lemma \ref{lem7.2}, there is a decomposition $C^*(\la)=I_{K_1}\oplus \cdots \oplus I_{K_l}$ such that $H_i:=\Sigma(\bigcup_{j=i}^l K_j)$ for $1\leq i\leq l$. So, for each $i$,  we have $I_{K_i}=\bigoplus_{j=1}^nI_{K_i}\cap A_j$, and thus either $I_{K_i}\cap A_j=(0)$ or $A_j\subseteq I_{K_i}$ because $A_j$ is indecomposable. This follows $l\leq n$, and consequently $n=\mathrm{max}\{|\C|:\C \mathrm{~is ~ a~chain~in~}\h\}$.

The ``$\Longleftarrow$" part follows from Lemma \ref{lem7.2}. Indeed, if $n=\max\{|\C|: \C \mathrm{~is~a~chain~in~}\h\}$ and $\C$ is a maximal chain in $\h$ with $|\C|=n$, then $\C$ induces a decomposition $C^*(\la)=A_1\oplus \cdots \oplus A_n$ with indecomposable summands. Therefore, $C^*(\la)$ is $n$-decomposable.

For the last statement, suppose that $C^*(\la)$ is $n$-decomposable and $C^*(\la)=I_{K_1}\oplus \cdots \oplus I_{K_n}$. Then, for every $1\leq i\leq n$, we have $C^*(\la)=I_{K_i}\oplus I_{K_i'}$ where $K_i':=\Sigma(\bigcup_{j\neq i} K_j)$; in particular,
$$I_{K_i}\cong \frac{C^*(\la)}{I_{K_i'}}\cong C^*(\la\setminus \la K_i').$$
Therefore, $\la_i:=\la\setminus \la K_i'$ are $k$-subgraphs of $\la$ and we have $C^*(\la)=\bigoplus_{i=1}^n I_{K_i}\cong \bigoplus_{i=1}^n C^*(\la_i)$.

To see that this decomposition is unique, assume $C^*(\la)=A_1\oplus \cdots \oplus A_m$ with indecomposable components. For any $1\leq j\leq m$, $A_j=A_j \cap C^*(\la)=\bigoplus_{i=1}^n A_j\cap I_{K_i}$. Since $A_j$ is indecomposable, there is $1\leq l_j\leq n$ such that $A_j\subseteq I_{K_{l_j}}$ and $A_j\cap I_{K_i}=(0)$ for $i\neq l_j$. By a same argument we have $I_{K_{l_j}}\subseteq A_j$ also, and hence they are equal. This follows that $m=n$ and $\{A_j\}_{j=1}^n$ is a permutation of $\{I_{K_i}\}_{i=1}^n$.
\end{proof}

The following is an immediate consequence of Theorem \ref{thm7.4}.

\begin{cor}
Let $\la$ be a locally convex row-finite $k$-graph. If $\la$ contains only finitely many saturated hereditary subsets, then there exist finitely many $k$-subgraphs $\la_1,\ldots, \la_n$ of $\la$ such that $C^*(\la)\cong C^*(\la_1)\oplus \cdots \oplus C^*(\la_n)$ and each $C^*(\la_i)$ is indecomposable.
\end{cor}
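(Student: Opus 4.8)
The plan is to deduce this immediately from Theorem \ref{thm7.4}, so the only real work is to verify that the quantity $n:=\max\{|\C|:\C\text{ is a chain in }\h\}$ is a well-defined finite positive integer under the standing hypothesis that $\la$ has only finitely many saturated hereditary subsets.

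First I would observe that every chain in $\h$ is \emph{strictly} decreasing. Indeed, if $\C:\la^0=H_1\succ\cdots\succ H_m$ is a chain, then for each $i<m$ the relation $H_i\succ H_{i+1}$ requires, by condition (2) of Definition \ref{defn6.5}, that $\Delta(H_i,H_{i+1})\neq\emptyset$; since $\Delta(H_i,H_{i+1})\subseteq H_i\setminus H_{i+1}$, this forces $H_i\supsetneq H_{i+1}$. Hence the terms $H_1,\ldots,H_m$ are pairwise distinct elements of $\h$.

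Next I would use the finiteness of $\h$. Because a strictly decreasing chain has all of its terms distinct, its length is bounded above by the cardinality of $\h$, which is finite by hypothesis. Moreover the one-term sequence $\{\la^0\}$ is always a chain (the requirement $H_i\succ H_{i+1}$ being vacuous in length one), so the set of chain lengths is nonempty and bounded, and therefore $n:=\max\{|\C|:\C\text{ is a chain in }\h\}\geq 1$ exists and is finite.

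Finally I would invoke Theorem \ref{thm7.4}: with this finite $n$, the ``$\Longleftarrow$'' direction gives that $C^*(\la)$ is $n$-decomposable, and the \emph{moreover} clause supplies $k$-subgraphs $\la_1,\ldots,\la_n$ of $\la$ with $C^*(\la)\cong C^*(\la_1)\oplus\cdots\oplus C^*(\la_n)$ and each $C^*(\la_i)$ indecomposable, which is exactly the assertion. I expect no genuine obstacle here; the single point that needs care is the strictness of chains, since it is precisely what lets the finiteness of $\h$ bound chain length, and this is immediate from condition (2) in the definition of $\succ$.
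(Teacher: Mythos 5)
Your proposal is correct and follows exactly the route the paper intends: the paper offers no written proof beyond calling the corollary an immediate consequence of Theorem \ref{thm7.4}, and your argument supplies precisely the missing verification --- that chains in $\h$ are strictly decreasing (via condition (2) of Definition \ref{defn6.5} and $\Delta(H_i,H_{i+1})\subseteq H_i\setminus H_{i+1}$), hence of length at most $|\h|$, so the maximum $n\geq 1$ exists and Theorem \ref{thm7.4} applies. No gaps.
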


{\bf Acknowledgement.}
The author would like to kindly acknowledge the referee for his/her useful comments and pointing the related article \cite{web11}.

%%%%%%%%%%%%%%%%%%%%%%%%%%%%%%%%%%%%%%%%%%%%%%%%

%%%%%%%%%%%%%%%%%%%%%%%%%%%%%%%%%%%%%%%%%%%%%%%%%%%%%%%%%%%%%%%%%%%%%

\end{document}